\date{}
\title{Projection-free computation of robust controllable sets with constrained zonotopes}
\author{Abraham P. Vinod\thanks{Corresponding author}, Avishai Weiss, and Stefano Di Cairano \\
	Mitsubishi Electric Research Laboratories, Cambridge, MA 02139 \\
	\texttt{abraham.p.vinod@ieee.org}, \texttt{weiss@merl.com}, \texttt{dicairano@ieee.org} \\
}
\begin{document}
\newcommand{\bbb}{\mathbb{B}}
\newcommand{\bbc}{\mathbb{C}}
\newcommand{\bbe}{\mathbb{E}}
\newcommand{\bbn}{\mathbb{N}}
\newcommand{\bbp}{\mathbb{P}}
\newcommand{\bbr}{\mathbb{R}}
\newcommand{\bbs}{\mathbb{S}}
\newcommand{\bbz}{\mathbb{Z}}
\newcommand{\bfc}{\mathbf{C}}
\newcommand{\bfx}{\mathbf{X}}
\newcommand{\cala}{\mathcal{A}}
\newcommand{\calb}{\mathcal{B}}
\newcommand{\calc}{\mathcal{C}}
\newcommand{\cald}{\mathcal{D}}
\newcommand{\cale}{\mathcal{E}}
\newcommand{\calf}{\mathcal{F}}
\newcommand{\calg}{\mathcal{G}}
\newcommand{\calh}{\mathcal{H}}
\newcommand{\cali}{\mathcal{I}}
\newcommand{\calk}{\mathcal{K}}
\newcommand{\call}{\mathcal{L}}
\newcommand{\calm}{\mathcal{M}}
\newcommand{\caln}{\mathcal{N}}
\newcommand{\calo}{\mathcal{O}}
\newcommand{\calp}{\mathcal{P}}
\newcommand{\calq}{\mathcal{Q}}
\newcommand{\calr}{\mathcal{R}}
\newcommand{\cals}{\mathcal{S}}
\newcommand{\calt}{\mathcal{T}}
\newcommand{\calu}{\mathcal{U}}
\newcommand{\calv}{\mathcal{V}}
\newcommand{\calw}{\mathcal{W}}
\newcommand{\calx}{\mathcal{X}}
\newcommand{\caly}{\mathcal{Y}}
\newcommand{\calz}{\mathcal{Z}}
\newcommand{\scra}{\mathscr{A}}
\newcommand{\scrb}{\mathscr{B}}
\newcommand{\scrc}{\mathscr{C}}
\newcommand{\scre}{\mathscr{E}}
\newcommand{\scrg}{\mathscr{G}}
\newcommand{\scrh}{\mathscr{H}}
\newcommand{\scrl}{\mathscr{L}}
\newcommand{\scro}{\mathscr{O}}
\newcommand{\scrp}{\mathscr{P}}
\newcommand{\scrr}{\mathscr{R}}
\newcommand{\scru}{\mathscr{U}}
\newcommand{\scrv}{\mathscr{V}}
\newcommand{\scrw}{\mathscr{W}}
\newcommand{\scrx}{\mathscr{X}}

\newcommand{\Nint}[2]{\mathbb{N}_{[#1:#2]}}

\newtheorem{theorem}{Theorem}
\newtheorem{assumption}{Assumption}
\newtheorem{condition}{Condition}
\newtheorem{remark}{Remark}
\newtheorem{lemma}{Lemma}
\newtheorem{problem}{Problem}
\newtheorem{corollary}{Corollary}
\newtheorem{proposition}{Proposition}
\newtheorem{definition}{Definition}
\newtheorem{example}{Example}

\newcommand{\dofo}[1]{\scro_{{#1}}}
\newcommand{\dvec}{\xi}
\newcommand{\nineq}[1]{{L_{#1}}}
\newcommand{\ndvec}[1]{{N_{#1}}}
\newcommand{\nconst}[1]{{M_{#1}}}
\newcommand{\convHull}[1]{\operatorname{CH}\left({#1}\right)}
\newcommand{\affHull}[1]{\operatorname{AH}\left({#1}\right)}
\newcommand{\ndvecEllipsoid}{n}
\newcommand{\Tplan}{T_\text{plan}}
\newcommand{\Tsafe}{T_\text{safe}}
\newcommand{\Tsim}{T_\text{sim}}
\newcommand{\Uoffnom}{\calu_\text{off-nom}}
\newcommand{\Woffnom}{\calw_\text{off-nom}}
\newcommand{\Eoffnom}{\cale_\text{off-nom}}
\newcommand{\Nmass}{{N_M}}

\maketitle

\begin{abstract} 
We study the problem of computing robust controllable sets for discrete-time linear systems with additive uncertainty. We propose a tractable and scalable approach to inner- and outer-approximate robust controllable sets using constrained zonotopes, when the additive uncertainty set is a symmetric, convex, and compact set. Our least-squares-based approach uses novel closed-form approximations of the Pontryagin difference between a constrained zonotopic minuend and a symmetric, convex, and compact subtrahend. We obtain these approximations using two novel canonical representations for full-dimensional constrained zonotopes. Unlike existing approaches, our approach does not rely on convex optimization solvers, and is projection-free for ellipsoidal and zonotopic uncertainty sets. We also propose a least-squares-based approach to compute a convex, polyhedral outer-approximation to constrained zonotopes, and characterize sufficient conditions under which all these approximations are exact. We demonstrate the computational efficiency and scalability of our approach in several case studies, including the design of abort-safe rendezvous trajectories for a spacecraft in near-rectilinear halo orbit under uncertainty. Our approach can inner-approximate a 20-step robust controllable set for a 100-dimensional linear system in under 15 seconds on a standard computer.
\end{abstract}

\section{Introduction}
 
Robust controllable (RC) sets characterize a set of states from which a collection of possibly time-varying state constraints can be satisfied by a controlled state trajectory, under bounded control authority and uncertainty. RC sets are essential for robust model predictive control~\cite{langson2004robust,mayne2005robust,borrelli2017predictive,bertsekas_1971}, fault-tolerant
control~\cite{scott_constrained_2016,raghuraman2022set,xu2024observer,GruberRobustSafetyTAC}, and verification of dynamical systems~\cite{yang_efficient_2022,gleason2021lagrangian,vinod2021abort,vinod2019sreachtools}, and have been used in a broad range of applications in space~\cite{daniel,vinod2021abort,gleason2021lagrangian,vinod2019sreachtools}, transportation~\cite{ahn2020reachability}, and robotics~\cite{malone2017hybrid,Blanchini2015}. For discrete-time linear systems with additive uncertainty and bounded, polytopic state and input constraints, exact RC sets are known to be polytopes, and the RC sets may be computed using set computations on polytopes. However, polytope-based RC set computation involves projection that has a combinatorial computational complexity and causes numerical issues for high-dimensional systems and/or over long horizons~\cite{Blanchini2015,borrelli2017predictive,althoff2021set,kerrigan2000robust}. In this paper, we focus in addressing these shortcomings, and propose \emph{novel theory and algorithms for efficient and scalable computation of inner- and outer-approximations of RC sets, that admit closed-form description of the sets involved and can be computed without relying on convex optimization solvers in several cases.}

We are motivated by the problem of designing \emph{active, abort-safe, spacecraft rendezvous} trajectories with the Lunar gateway~\cite{gateway}. In abort-safe rendezvous, we seek rendezvous trajectories that nominally steer the control-constrained spacecraft towards the rendezvous target, while allowing for the possibility of diverting away from the rendezvous target in the event of failure. Often, failures in space applications are characterized by partial or complete loss of actuation, and increased actuation and navigational uncertainties that may appear as additive uncertainties in the dynamics. For designing such rendezvous trajectories, we use RC sets to characterize a set of state constraints that a nominal trajectory should satisfy, where the RC sets encode the desirable property of safe abort under limited available actuation and uncertainty, similarly to~\cite{daniel,vinod2021abort}. However, polytope-based computation of RC sets for high-dimensional systems suffers from numerical issues, primarily due to projection~\cite{Blanchini2015,borrelli2017predictive,kerrigan2000robust,althoff2021set}.
On the other hand, our approach can compute the required high-dimensional RC sets. %

We will use constrained zonotopes, a recently proposed alternative description to polytopes~\cite{scott_constrained_2016,raghuraman2022set,yang_efficient_2022,Althoff2015ARCH}, to achieve tractable approximation of RC sets.
In the disturbance-free setting, constrained zonotopes provide closed-form expressions for all set operations used in the exact computation of the \emph{controllable set}~\cite{scott_constrained_2016}. However, an exact computation of RC sets using constrained zonotopes is hindered by the fact that there are no tractable approaches to compute the exact Pontryagin difference with a constrained zonotopic minuend~\cite{raghuraman2022set,yang_efficient_2022}.

Recently,~\cite{yang_efficient_2022} described an optimization-based two-stage approach to inner-approximate the Pontryagin difference between a constrained zonotope and a zonotope, which allows for computing inner-approximations of RC sets when the additive disturbance set is a zonotope. However, it is unclear how such an approach extends to ellipsoidal uncertainty (a more common setting in control), and the reliance of optimization hinders fast computation of RC sets for high-dimensional systems. In this paper, we propose \emph{least-squares-based algorithms to generate constrained zonotopes that inner- and outer-approximate the Pontryagin difference between a constrained zonotopic minuend and a symmetric, convex, and compact subtrahend}. Our approaches also admit closed-form expressions for the approximations when using a full-dimensional minuend, and an ellipsoidal or zonotopic subtrahend. Then, we use these algorithms for fast and scalable computation of RC sets for an additive uncertainty set that is symmetric, convex, and compact.

The main contributions of this paper are as follows: 1) a tractable inner- and outer-approximation of the Pontryagin difference between a constrained zonotopic minuend and a symmetric, convex, and compact subtrahend, 2) an inner- and outer-approximation of the RC sets using the proposed approximations to the Pontryagin difference, 3) a closed-form description of a polyhedral outer-approximation of a constrained zonotope, and 4) sufficient conditions under which the approximations proposed above are exact.
We propose two canonical representations for full-dimensional constrained zonotopes, which facilitate the use of the least-squares method as the primary tool for all of our approaches. 
Efficient implementations for the least-squares method are well-known~\cite{boyd_introduction_2018}.
Unlike~\cite{yang_efficient_2022}, the proposed approximations to the Pontryagin difference admit closed-form expressions when the subtrahend is an ellipsoid, a zonotope, or a convex hull of a collection of symmetric intervals. 
These features together enable an inner-approximation of a 20-step RC set for a 100-dimensional linear system in less than 15 seconds on a standard computer.

The rest of this paper is organized as follows: Sec.~\ref{sec:prelim} provides the necessary mathematical background and states the problem statements of interest. Sec.~\ref{sec:inner_approx} and~\ref{sec:outer_approx} describe the proposed approaches to approximate the Pontryagin difference between a constrained zonotopic minuend and a symmetric, convex, and compact subtrahend. Sec.~\ref{sec:inner_approx_RC} applies these approaches to the computation of RC sets, and Sec.~\ref{sec:num} presents several case studies that demonstrate the utility and scalability of the proposed approach. Sec.~\ref{sec:num_app} applies the method to the (simplified) motivating problem of active abort-safe rendezvous with the Lunar gateway. 
Sec.~\ref{sec:conc} summarizes the paper.

\section{Preliminaries}
\label{sec:prelim}

$0_{n\times m}$ and $1_{n\times m}$ are matrices of zeros and ones in $\bbr^{n\times m}$ respectively,
$I_n$ is the $n$-dimensional identity matrix, 
$\Nint{a}{b}$ is the subset of natural numbers between (and including) $a,b\in \bbn$, $a \leq b$, $\llbracket a, b\rrbracket=\{x\in\bbr\mid a\leq x \leq b\}$, $e_i$ is the standard axis vectors of $\bbr^n$, and
${\| \cdot \|}_p$ is the $\ell_p$-norm of a vector. 
Let $M$ be a matrix and $M_1$ ($M_2$, resp.) be another matrix of the same height (width, resp.) as $M$. Then, $[M, M_1]$ ($[M; M_2]$, resp.) denotes the matrix obtained by concatenating $M$ and $M_1$ horizontally (concatenating $M$ and $M_2$ vertically, resp.). 
For a matrix $M\in\bbr^{m\times n}$ with full row rank, $M^\dagger=M^\top {\left({MM^\top}\right)}^{-1}$ denotes its
(right) pseudoinverse, and $x=M^\dagger v$ solves the system of linear equation $Mx=v$ for any vector $v\in\bbr^m$~\cite[Sec. 11.5]{boyd_introduction_2018}. 
Given $d\in\bbr^n$, $\operatorname{diag}(d)$ is a $n$-dimensional diagonal matrix with diagonal
entries $d_i$.

A set $\cals\subset\bbr^n$ is said to be \emph{symmetric} about any $c\in\bbr^n$, if $c+x\in\cals$ implies $c-x\in\cals$ for any
$x\in\bbr^n$. For any set $\cals\subseteq\bbr^n$, we denote its \emph{convex hull} and \emph{affine hull} by
$\convHull{\cals}$ and $\affHull{\cals}$ respectively. Recall that $\affHull{\cals}$ is an affine set such that $\cals\subseteq \cala \Rightarrow \affHull{\cals}\subseteq \cala$ for any affine set $\cala$. 
The \emph{affine dimension} of a set $\cals$ is the dimension of the subspace associated with $\affHull{\cals}$.
A \emph{full-dimensional} set in $\bbr^n$ is a non-empty set with an affine dimension of $n$. See~\cite[Sec.
2.1]{boyd2004convex} for more details.

\subsection{Set representations}
\label{sub:prelim_set_defn}

Let $\calc$ be a convex and compact polytope in $\bbr^n$. We consider two representations of $\calc$ --- \emph{H-Rep polytope} \eqref{eq:hrep_polytope} and \emph{constrained zonotope} \eqref{eq:constrained_zonotope},
\begin{subequations}
\begin{align}
    \calc&= \{x\mid H_Cx \leq k_C\}\label{eq:hrep_polytope},\\
    \calc&= \left\{G_C\dvec + c_C\ \middle\vert\  
                    {\|\dvec\|}_\infty \leq 1,\ 
                    A_C\dvec=b_C
    \right\}\label{eq:constrained_zonotope},
\end{align} \label{eq:set_representations}%
\end{subequations}
with $H_C\in\bbr^{\nineq{C}\times n}$, $k_C\in\bbr^\nineq{C}$, $G_C\in\bbr^{n\times \ndvec{C}}$, $c_C\in\bbr^{n}$, $A_C\in\bbr^{\nconst{C}\times
\ndvec{C}}$, and $b_C\in\bbr^{\nconst{C}}$.
Here, \eqref{eq:hrep_polytope} is the intersection of $\nineq{C}$ halfspaces and 
\eqref{eq:constrained_zonotope} is an affine transformation of $\calb_\infty(A_C, b_C)$,
$\calc = c_C + G_C\calb_\infty(A_C,b_C)$ with
\begin{align}
    \calb_\infty(A_C,b_C)\triangleq \{\dvec \mid {\|\dvec\|}_\infty \leq 1, A_C\dvec = b_C\}\label{eq:b_infty}.
\end{align}
In \eqref{eq:b_infty}, $\calb_\infty(A_C, b_C)$ is the intersection of a unit-hypercube in $\bbr^{\ndvec{C}}$ and 
$\nconst{C}$ linear equalities. 
By definition, 
$\calc\neq\emptyset$ if and only if $\calb_\infty(A_C, b_C)\neq\emptyset$.
Other respresentations of $\calc$, apart from \eqref{eq:set_representations}, include vertex representation (V-Rep polytope)~\cite{borrelli2017predictive} and AH-polytopes~\cite{sadraddini2019linear}.
We refer to \emph{unbounded} H-Rep polytopes as \emph{convex polyhedra}, and use $\calc=(G_C,c_C,A_C,b_C)$ to denote a polytope $\calc$ in constrained zonotope representation \eqref{eq:constrained_zonotope}.

The equivalence of the representations in \eqref{eq:set_representations} was established in~\cite[Thm. 1]{scott_constrained_2016}.  Additionally,~\cite[Thm. 1]{scott_constrained_2016} provides a systematic approach to convert H-Rep polytopes
\eqref{eq:hrep_polytope} to constrained zonotopes \eqref{eq:constrained_zonotope}. However, an exact conversion of \eqref{eq:constrained_zonotope} to \eqref{eq:hrep_polytope} is known to be computationally
demanding, with existing approaches applicable only for low-dimensional constrained zonotopes~\cite[Prop.
3]{scott_constrained_2016}.

The representations \eqref{eq:set_representations} are not unique.
For H-Rep polytopes, a canonical reduction (up to a permutation of rows) is available using linear programming~\cite{borrelli2017predictive,MPT3}.
However, no such canonical reduction is available for constrained zonotopes, to the best of our knowledge.
On the other hand, several exact and approximate techniques are available to reduce the \emph{representation complexity} of a given constrained zonotope~\cite{scott_constrained_2016,raghuraman2022set,kopetzki2017methods}.
\begin{definition}{\textsc{(Representation complexity)}}~\cite{scott_constrained_2016}
    Let $\dofo{C}\triangleq (N_C - M_C)/n$ be the \emph{degrees-of-freedom order} of a constrained zonotope $\calc=(G_C,c_C,A_C,b_C)$. Then, the representation complexity of $\calc$ is $\scrc(\calc)=(M_C, \dofo{C})$. \label{defn:repr}
\end{definition}

Zonotopes $\calz$, ellipsoids $\cale$, and convex unions of symmetric intervals $\cali$ are affine transformations of unit
balls defined using $\ell_\infty$-norms,
$\ell_2$-norms, and $\ell_1$-norms respectively.
Formally, we define $\calz,\cale,\cali\subset \bbr^n$ as follows,
\begin{subequations}
\begin{align}
    \calz&\triangleq \{G_Z\dvec + c_Z \mid {\|\dvec\|}_\infty \leq 1\}\label{eq:zonotope},\\
    \cale&\triangleq \{G_E\dvec + c_E \mid {\|\dvec\|}_2 \leq 1\}\label{eq:ellipsoid},\\
    \cali&\triangleq \{G_I\dvec + c_I \mid {\|\dvec\|}_1 \leq 1\}\label{eq:cui},
\end{align} \label{eq:set_representations_specific}%
\end{subequations}
with $G_Z\in\bbr^{n\times \ndvec{Z}}$, $G_E\in\bbr^{n\times n}$, $G_I\in\bbr^{n\times \ndvec{I}}$, and $c_Z,c_E,c_I\in\bbr^n$.
The sets $\calz,\cale,\cali$ are symmetric about $c_Z,c_E,c_I$.
We denote these sets using $(G, c)$.

For a convex and compact set $\cals\subset\bbr^n$, %
its support function $\rho:\bbr^n \to \bbr$ and support vector $\vartheta:\bbr^n \to \bbr^n$ are 
\begin{align}
    \vartheta_{\cals}(\nu)\triangleq \arg\sup\nolimits_{s\in\cals}\ \nu^\top s,\text{ and }\rho_{\cals}(\nu)\triangleq \nu^\top\vartheta_{\cals}(\nu)\label{eq:support}.
\end{align}
Using \emph{dual norms}~\cite[Sec. A.1.6]{boyd2004convex} and properties of support function~\cite[Prop. 2]{girard_efficient_2008}, we have closed-form
expressions for the support function of zonotopes $\calz$, ellipsoids $\cale$, and convex unions of symmetric intervals
$\cali$,
\begin{subequations}
\begin{align}
\rho_{\cali}(\nu) &= \nu^\top c_I + {\|G_I^\top \nu\|}_\infty\label{eq:support_cui},\\
\rho_{\cale}(\nu) &= \nu^\top c_E + {\|G_E^\top \nu\|}_2\label{eq:support_nuipsoid},\\
\rho_{\calz}(\nu) &= \nu^\top c_Z + {\|G_Z^\top \nu\|}_1\label{eq:support_zonotope}.
\end{align}\label{eq:support_sets}%
\end{subequations}%

\subsection{Set operations}
\label{sub:prelim_set_operations}

For any sets $\calc, \cals\subseteq\bbr^n$ and $\calw\subseteq\bbr^m$, and a matrix $R\in\bbr^{m\times n}$, we define the set operations (affine map, Minkowski sum $\oplus$, intersection with inverse affine map $\cap_R$, and Pontryagin difference $\ominus$):
\begin{subequations}
\begin{align}
    R\calc &\triangleq\{R u \mid u \in \calc\},\label{eq:affinemap}\\
    \calc \oplus \cals &\triangleq\{u + v \mid u \in \calc,\ v\in\cals\},\label{eq:msum}\\
    \calc \cap_R \calw &\triangleq\{u\in\calc \mid  Ru \in \calw\},\label{eq:intersection}\\
    \calc \ominus \cals &\triangleq\{u \mid  \forall v \in\cals, u + v \in \calc\}.\label{eq:pdiff}
\end{align}\label{eq:set_operations}%
\end{subequations}
Since $\calc\cap\cals=\calc\cap_{I_n}\cals$, \eqref{eq:intersection} also includes the standard intersection. For any
$x\in\bbr^n$, we use $\calc + x$ and $\calc - x$ to denote $\calc \oplus \{ x\}$ and $\calc \oplus \{-x\}$ respectively
for brevity. 

Constrained zonotopes admit closed-form
expressions for various set operations described in \eqref{eq:set_operations}. 
From~\cite{scott_constrained_2016,raghuraman2022set}, 
\begin{subequations}
    \begin{align}
    R\calc &= (RG_C,Rc_C,A_C,b_C),\label{eq:affinemap_CZ}\\   
    \calc \oplus \cals &= \left([G_C, G_S], c_C+c_S,[A_C, 0; 0, A_S],\right. \nonumber\\
    &\qquad\left.[b_C;b_S]\right)\label{eq:msum_CZ},\\
\calc \cap_R \calw &= \left([G_C, 0], c_C,[A_C, 0; 0, A_W;RG_C, -G_W],\right.\nonumber\\
&\qquad\left.[b_C;b_W;c_W - R c_C]\right)\label{eq:intersection_CZ},\\
\calc \cap \calh &= \left([G_C, 0], c_C,[A_C, 0; p^\top G_C, d_m/2],\right.\nonumber\\
&\qquad\left.[b_C;(q + p^\top c_C - \|p^\top G_C\|_1)/2]\right)\label{eq:intersection_CZ_H},
\end{align}\label{eq:set_operations_CZ}%
\end{subequations}
where $\calh=\{x \mid p^\top x \leq
q\}\subset\bbr^n$ is an halfspace, and  
\eqref{eq:intersection_CZ_H} also enables an exact computation of the intersection of a constrained zonotope and a
convex polyhedron. %

To the best of our knowledge, the Pontryagin difference \eqref{eq:pdiff} involving a constrained zonotopic minuend does not have a closed-form expression, similar to \eqref{eq:set_operations_CZ}~\cite{raghuraman2022set,yang_efficient_2022}.
In fact, given a constrained zonotope $\calc$ and a zonotope $\calz$, it is impossible to find a polynomial-size constrained zonotope $\calc\ominus\calz$ in polynomial-time, unless P=NP~\cite[Prop. 1]{yang_efficient_2022}.  
On the other hand,  
given a H-Rep polytope $\calp=\{x\mid H_P x \leq k_P\}$
and a convex and compact
subtrahend $\cals$,
a H-Rep polytope $\calp\ominus\cals$ is available in closed-form, 
\begingroup
    \makeatletter\def\f@size{9.5}\check@mathfonts
\begin{align}
     \calp\ominus\cals &=\{x \mid H_Px\leq k_P - [\rho_{\cals}(h_1);\ \ldots; \rho_{\cals}(h_{\nineq{P}})]\}\label{eq:pdiff_polytope},
\end{align}
\endgroup
with $H_P=[h_1^\top;h_2^\top;\ldots;h_{\nineq{C}}^\top]$~\cite[Thm. 2.3]{kolmanovsky1998theory}. 
A direct use of \eqref{eq:pdiff_polytope} to compute a constrained zonotope $\calc\ominus\cals$ for a constrained zonotope minuend $\calc$ is prevented by the difficulty in converting \eqref{eq:constrained_zonotope} to \eqref{eq:hrep_polytope}.

Recently,~\cite{yang_efficient_2022} proposed a two-stage approach to inner-approximate $\calc\ominus\cals$
between a constrained zonotopic minuend $\calc$ and a zonotopic subtrahend $\cals$ with
$G_S=\left[{g_{S}^{(1)},\ldots,g_{S}^{(\ndvec{S})}}\right]$.
The two-stage approach first solves a linear program with $2\ndvec{C}\ndvec{S}$ variables,
\begin{align}
    \hspace*{-0.75em}\begin{array}{cl}
    {\text{minimize}} &\ \  \operatorname{SumAbs}(\Gamma)\\
    \text{s.\ t.} &\ \ [G_C;A_C]\Gamma=[G_S;0_{\nconst{C}\times \ndvec{S}}],\\
    \forall i\in\Nint{1}{\ndvec{C}}, &\ \ \sum_{j=1}^{\ndvec{S}} |\Gamma_{ij}|\leq 1,\\
    \end{array}\label{eq:yang_pdiff_cz_minus_v_lp}%
\end{align}
where $\operatorname{SumAbs}(\Gamma)\triangleq\sum_{i=1}^{\ndvec{C}}\sum_{j=1}^{\ndvec{S}} |\Gamma_{ij}|$. Let the optimal solution to 
\eqref{eq:yang_pdiff_cz_minus_v_lp} be $\Gamma^\ast$, if it exists, and the second stage  defines a diagonal matrix $D\in\bbr^{\ndvec{C}\times\ndvec{C}}$ with 
\begin{align}
D_{ii}=1-{\|e_i^\top \Gamma^\ast\|}_1
=1 -
\sum_{j=1}^{\ndvec{S}}|\Gamma^\ast_{ij}|\label{eq:yang_pdiff_cz_minus_v_d_ii}
\end{align}
for each $i\in\Nint{1}{\ndvec{C}}$. Then, a constrained zonotopic  inner-approximation of $\calc\ominus\cals$ is  
\begin{align}
    \calm^-\triangleq (G_CD,c_C - c_S, A_CD,b_C)\subseteq 
    \calc\ominus\cals\label{eq:m_minus_yang}.
\end{align}
Here, $\scrc(\calm^-)=\scrc(\calc)$, which is desirable when using
\eqref{eq:m_minus_yang} in
set recursions involving the Pontryagin difference. However, solving \eqref{eq:yang_pdiff_cz_minus_v_lp} repeatedly can become computationally
expensive for large $\ndvec{C},\ndvec{S}$.
Also, it is unclear if such an approach extends to subtrahends beyond zonotopes,
since it uses polytopic containment results from~\cite{sadraddini2019linear}. We address these
shortcomings in this paper.
\begin{problem}\label{prob_st:pdiff}
    Let $\calc$ be a constrained zonotope and $\cals$ be a convex and compact set that is symmetric about 
    $c_S\in\bbr^n$. Characterize constrained zonotopes
    $\calm^-,\calm^+\subset \bbr^n$ with 
    \begin{align}
        \calm^-\subseteq\calm\triangleq\calc\ominus\cals\subseteq\calm^+,\label{eq:prob_st_m_minus_plus}
    \end{align}
    where $\calm^-$ is given in closed-form and
    $\scrc(\calm^-)=\scrc(\calc)$. Also, provide sufficient conditions for $\calm^-=\calm=\calm^+$.
\end{problem}
Our approach to address Prob.~\ref{prob_st:pdiff} follows a similar two-stage approach to that in~\cite{yang_efficient_2022} to obtain \eqref{eq:m_minus_yang}. However, unlike~\cite{yang_efficient_2022}, we provide closed-form expressions for $\Gamma$
and $D$ used in \eqref{eq:yang_pdiff_cz_minus_v_d_ii} and \eqref{eq:m_minus_yang} for a broader
class of subtrahends that includes ellipsoids and convex unions of symmetric 
intervals. 
We also propose a constrained zonotopic outer-approximation to $\calm$, and provide sufficient
conditions under which these approximations are exact. The proposed outer-approximation relies on solving the following
problem.
\begin{problem}\label{prob_st:outer_approx}
    Given a constrained zonotope $\calc$, design an algorithm to compute an outer-approximating convex
    polyhedron $\calp$ (i.e, $\calp\supseteq\calc$) in the form of \eqref{eq:hrep_polytope} that has at most
    $2\ndvec{C}$ linear constraints.  Also, provide sufficient
    conditions under which $\calc=\calp$.
\end{problem}
We will also briefly discuss the relationship between the proposed approach and the two-stage approach in~\cite{yang_efficient_2022}. 

\subsection{Robust controllable set}

Consider a linear time-varying system,
\begin{align}
    x_{t+1} = A_t x_t + B_t u_t + F_t w_t\label{eq:ltv_dyn},
\end{align}
with state $x_t\in\bbr^n$, input $u_t\in\calu_t\subset\bbr^m$, disturbance $w_t\in\calw_t\subset\bbr^p$, and appropriately
defined time-varying matrices $A_t$, $B_t$, and $F_t$. 
\begin{definition}{\textsc{($T$-Step RC set)}~\cite[Defn. 10.18]{borrelli2017predictive}}\label{defn:RC_set}
Given \eqref{eq:ltv_dyn}, a set of (possibly time-varying) state constraints ${\{\calx_t\}}_{t=0}^{T-1}$
with $\calx_t\subseteq\bbr^n$ for each $t$, and a goal set $\calg\subset\bbr^n$, the $T$-step robust controllable
(RC) set is
    \begin{align}
    \calk = \left\{x_0\in\calx_0\ \middle\vert\ \begin{array}{c}
    \forall t\in\Nint{0}{T-1},\ \exists u_t\in\calu_t,\ \forall w_t\in\calw_t,\\
     x_{t+1} = A_t x_t + B_t u_t + F_t w_t,\\
     x_{t}\in\calx_{t},\ x_T\in\calg
    \end{array}\right\}.\nonumber%
\end{align}
\end{definition}
Informally, the $T$-step RC set $\calk \subset \bbr^n$ is the set of initial states that, despite the additive disturbance $w_t\in\calw_t$, can be steered using $u_t\in\calu_t$ to reach the
goal set $\calg$ at time step $T$, while staying within the state constraints $\calx_t$ at
all intermediate time steps $t\in\Nint{0}{T-1}$. The $T$-step RC set is also known as robust
reachability of target tube~\cite{bertsekas_1971}, backward
reachable set~\cite{yang_efficient_2022}, or backward reach-avoid set~\cite{gleason2021lagrangian}.
The following set recursion yields $\calk=\calk_0$,
\begingroup
\makeatletter\def\f@size{8.5}\check@mathfonts
\begin{subequations}
\begin{align}
    \calk_t &= \operatorname{Pre}(\calk_{t+1}) \cap \calx_{t},\qquad\quad \forall
    t\in\Nint{0}{T-1},\\
    \operatorname{Pre}(\calk_{t+1})&\triangleq \{x\mid A_tx \in (\calk_{t+1} \ominus F_t\calw_t) \oplus (-B_t \calu_t)\},\label{eq:pre_defn}
\end{align}\label{eq:set_recursion}%
\end{subequations}
\endgroup
with $\calk_T\triangleq\calg$.
Observe that \eqref{eq:set_recursion} uses all set operations in \eqref{eq:set_operations}. For H-Rep/V-Rep polytopes  
$\calx,\calg,\calu$, and $\calw$, we can compute RC sets using existing results~\cite{MPT3,gleason2021lagrangian,borrelli2017predictive}.
However, when using H-Rep/V-Rep polytopes for high-dimensional systems \eqref{eq:ltv_dyn} or over long
horizons $T$, we face numerical
challenges when implementing \eqref{eq:set_recursion}, since it requires a combination of Minkowski sum and intersection operations. See~\cite{borrelli2017predictive,Blanchini2015,althoff2021set,kerrigan2000robust} for a detailed discussion.

On the other hand, constrained zonotopes provide a tractable, exact solution when $\calw=\emptyset$ using
\eqref{eq:set_operations_CZ}~\cite{scott_constrained_2016,raghuraman2022set,yang_efficient_2022}. Also, for a zonotopic
$\calw$,~\cite{yang_efficient_2022} uses the two-stage approach reviewed in Sec.~\ref{sub:prelim_set_operations} to
propose an inner-approximation  of $\calk$. 
In this paper, we use our solution to Prob.~\ref{prob_st:pdiff} to inner-approximate $\calk$ for a broader class of
 sets $\calw$.
\begin{problem}\label{prob_st:RC_set}
    Compute a constrained zonotope $\calk^-$ that inner-approximates $\calk$ defined in Defn.~\ref{defn:RC_set}, where for every $t\in\Nint{0}{T-1}$, $\calu_t$ and $\calg$ are polytopes, $\calw_t$ are symmetric,
    convex, and compact sets, and either 1) 
    $\calx_t$ are convex polyhedra and $A_t$ are invertible, or 2) $\calx_t$ are polytopes.
    Additionally, characterize the representation complexity of $\calk^-$ in both cases.
\end{problem}
In both settings considered, the exact RC sets are known to be polytopes~\cite{borrelli2017predictive}, and hence are representable via constrained zonotopes.
We also use the proposed outer-approximation to the Pontryagin
difference to outer-approximate $\calk_t$ when using \eqref{eq:set_recursion}. %

\section{Inner-approximation of Pontryagin difference}
\label{sec:inner_approx}

Given a constrained zonotopic minuend $\calc\subset \bbr^n$ and a symmetric, convex, and compact subtrahend
$\cals\subset\bbr^n$, we characterize a constrained zonotope $\calm^-\subset \bbr^n$ where
\begin{align}
    \calm^-\subseteq\calm\triangleq\calc\ominus\cals.\label{eq:pdiff_inner_approx}
\end{align}
Specifically, we provide closed-form expressions for a diagonal matrix $D\in\bbr^{\ndvec{C}\times \ndvec{C}}$ using the properties of $\calc$ and $\cals$, and define $\calm^-=
(G_{M^-},c_{M^-},A_{M^-},b_{M^-})$ with
\begin{subequations}
    \begin{align}
        G_{M^-} &= G_C D,&&& c_{M^-} &= c_C - c_S,\\
        A_{M^-} &= A_C D,&&& b_{M^-} &= b_C,
    \end{align}\label{eq:pdiff_inner_approx_parameters}%
\end{subequations}
for a specific $c_S\in\cals$, similarly to \eqref{eq:m_minus_yang}.
The representation complexity of $\calm^-$ is $\scrc(\calm^-)=(\nconst{C}, \dofo{C})=\scrc(\calc)$.

In this section, we propose two constrained zonotope representations that allow us to address Prob.~\ref{prob_st:pdiff}, and relate our approach with the two-stage approach
in~\cite{yang_efficient_2022} for zonotopic subtrahends.
Table~\ref{tab:theory} summarizes the computation of $D$ for various types of sets.%
\begin{table*}
    \centering
    \adjustbox{width=1\linewidth}{\begin{tabular}{|p{22em}|p{24.5em}|p{3.5em}|}
    \hline
    \hfill Subtrahend $\cals$ & Definition of the diagonal matrix $D=\operatorname{diag}_{i\in\Nint{1}{\ndvec{C}}}(D_{ii})$ & Result \\\hline\hline
    \hfill Convex and compact $\cals$, symmetric about $c_S\in\bbr^n$ & $D_{ii} = 1 -
    \rho_{\cals_0}\left({e_i^\top
{[G_C;A_C]^\dagger[I_n;0_{\nconst{C}\times n}]}}\right)$ for $\cals_0\triangleq \cals - c_S$ &
Prop.~\ref{prop:pdiff_general_l2}\\\hline
    \hfill \ Zonotope $\cals$ \eqref{eq:zonotope} 
    & $D_{ii} = 1 - {\left\|{e_i^\top [G_C;A_C]^\dagger[G_S;0_{\nconst{C}\times
\ndvec{S}}]}\right\|}_1$ & \\\cline{1-2}
    \hfill Ellipsoid $\cals$ \eqref{eq:ellipsoid} 
    & $D_{ii} = 1 - {\left\|{e_i^\top [G_C;A_C]^\dagger[G_S;0_{\nconst{C}\times
 n}]}\right\|}_2$ &  Corr.~\ref{corr:specific}\\\cline{1-2}
    \raggedleft{Convex union of symmetric intervals $\cals$ \eqref{eq:cui}}  
    & $D_{ii} = 1 - {\left\|{e_i^\top [G_C;A_C]^\dagger[G_S;0_{\nconst{C}\times
\ndvec{S}}]}\right\|}_\infty$
    & \\\hline
    \end{tabular}}%
    \caption{Summary of the closed-form expressions for the diagonal matrix $D=\operatorname{diag}_{i\in\Nint{1}{\ndvec{C}}}(D_{ii})$ for various subtrahends $\cals$  that is symmetric about $c_S\in\bbr^n$, and the corresponding result in the paper. Given a full-dimensional constrained zonotopic minuend $\calc=(G_C,c_C,A_C,b_C)$, we propose a constrained zonotope $\calm^-=(G_C D, c_C - c_S, A_C D, b_C)$ that satisfies $\calm^-\subseteq\calm=\calc\ominus\cals$.}
    \label{tab:theory}
\end{table*}

\subsection{Full-dimensional constrained zonotopes}
\label{sub:aux}

\begin{figure}[t]
    \centering
    \includegraphics[width=0.5\linewidth]{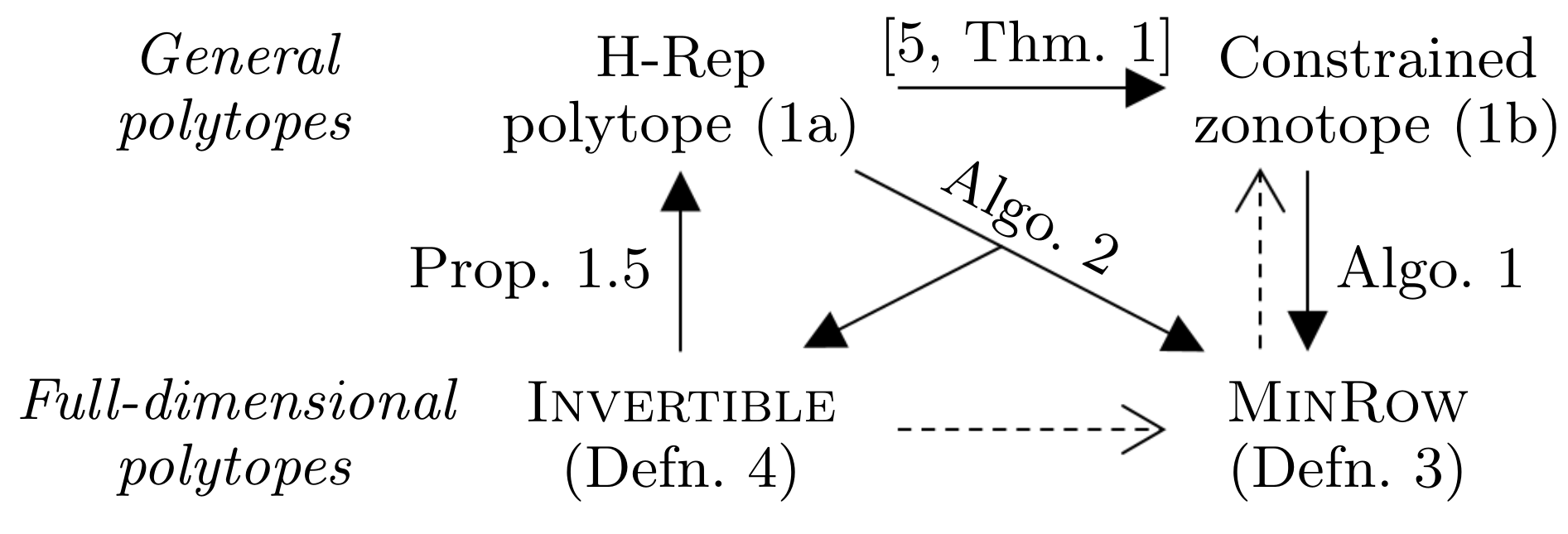}
    \caption{Relationship between various representations discussed in the paper. For any pair of representations $R_1, R_2$, a dashed arrow from $R_1$ to $R_2$ shows that $R_1$ is also $R_2$.}\label{fig:transforms}
\end{figure}

We study two constrained zonotope representations that are guaranteed to exist for full-dimensional polytopes.
\begin{definition}{\textsc{(MinRow Representation)}}
A constrained zonotope $\calc=(G_C,c_C,A_C,b_C)$ is a \textsc{MinRow} representation when $[G_C;A_C]$ has 
full row rank.\label{defn:MinRow}
\end{definition}
\begin{definition}{\textsc{(Invertible representation)}}
    A \textsc{MinRow} representation $\calc$ is an \textsc{Invertible} representation when $n+\nconst{C}=\ndvec{C}$, i.e., $\dofo{\calc}=1$.\label{defn:invertible}
\end{definition}
By definitions, the matrix $[G_C;A_C]\in\bbr^{(n+\nconst{C})\times \ndvec{C}}$ has a well-defined right pseudoinverse ${[G_C;A_C]}^\dagger$  for a \textsc{MinRow} representation, and $[G_C;A_C]$ is invertible for an \textsc{Invertible} representation.
Fig.~\ref{fig:transforms} illustrates the relationships between the representations based on Prop.~\ref{prop:Indep}.

\begin{algorithm}[t]
\caption{Compute \textsc{MinRow} representation}\label{algo:min_row}
\begin{algorithmic}[1]
    \Require Constrained zonotope $(G_C,c_C,A_C,b_C)$ that represents a full-dimensional polytope

    \Ensure \textsc{MinRow} representation  $(G_C',c_C',A_C',b_C')$
    
    \State Set $G_C'\gets G_C$ and $c_C'\gets c_C$.

    \State Get $A_C',b_C'$ from rows of $[A_C,b_C]$ where $[A_C',b_C']$ has full row rank and $\operatorname{rank}([A_C',b_C'])=\operatorname{rank}([A_C,b_C])$.\label{step:ACprime}
\end{algorithmic}
\end{algorithm}
\begin{algorithm}[t]
\caption{Compute \textsc{\textsc{Invertible}} representation}\label{algo:invertible}
\begin{algorithmic}[1]
    \Require H-Rep polytope $\calc=\{x \mid H_Cx\leq k_C\}\subset\bbr^n$ that represents a full-dimensional polytope

    \Ensure \textsc{Invertible} representation  $(G_C,c_C,A_C,b_C)$
    
    \State Define a zonotope $\calz=(G_Z, c_Z)$ \eqref{eq:zonotope} with $G_Z\triangleq{\operatorname{diag}\left(\frac{u - l}{2}\right)\in\bbr^{n\times n},c_Z\triangleq\frac{u + l}{2}}\in\bbr^n$ for $l,u\in\bbr^n$,
    \begin{align}
       l_i=-\rho_{\calc}(-e_i),\ \ u_i=\rho_{\calc}(e_i),\quad \forall i\in\Nint{1}{n}\label{eq:invertible_zonotope}.
    \end{align}
    \State Define $\sigma\in\bbr^{\nineq{C}}$ with $\sigma_i=-\rho_{\calz}(-h_i)$ for each $i\in\Nint{1}{\nineq{C}}$ and $H_C=[h_1^\top;h_2^\top;\ldots;h_{\nineq{C}}^\top]$.
    \State Compute the \textsc{Invertible} representation for $\calc$ with
\begingroup
    \makeatletter\def\f@size{9}\check@mathfonts
    \begin{align}
        G_{C} &= [G_Z, 0_{n\times \nineq{C}}],& c_C &= c_Z,\nonumber\\
        A_{C} &= \left[{H_C G_Z, \operatorname{diag}\left(\frac{\sigma - k}{2}\right)}\right],& b_C &= \frac{\sigma + k}{2} - H_Cc_Z. \nonumber
    \end{align}%
\endgroup
\end{algorithmic}
\end{algorithm}

\begin{proposition}\label{prop:Indep}
    The following statements hold:\\
    1) Every representation $(G_C,c_C,A_C,b_C)$ of a full-dimensional polytope $\calc$ satisfies $\operatorname{rank}(G_C)=n\leq \ndvec{C}$. \\
    2) A polytope is full-dimensional if and only if the polytope is non-empty and it has a \textsc{MinRow} representation.\\
    3) Algo.~\ref{algo:min_row} generates a \textsc{MinRow} representation from any full-dimensional constrained zonotope \eqref{eq:constrained_zonotope}.\\
    4) Algo.~\ref{algo:invertible} generates an \textsc{Invertible} representation from any full-dimensional H-Rep polytope \eqref{eq:hrep_polytope}.\\
    5) Given an \textsc{Invertible} representation $(G_C,c_C,A_C,b_C)$, the corresponding H-Rep polytope \eqref{eq:hrep_polytope} is given by,
    \begin{subequations}
    \begin{align}
        \hspace*{-2em}H_C &= \left[I_{\ndvec{C}}; -I_{\ndvec{C}}\right]{\left[G_C;A_C\right]}^{-1}\left[I_{n}; 0_{\nconst{C}\times n}\right],\\
        k_C & = 1_{2\ndvec{C}\times 1} - 
        \left[I_{\ndvec{C}}; -I_{\ndvec{C}}\right]{\left[G_C;A_C\right]}^{-1}\left[-c;b\right].
    \end{align}\label{eq:hrep_from_cz}
\end{subequations}
\end{proposition}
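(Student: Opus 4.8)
The plan is to prove a single linear-algebra lemma and then read the five claims off it. Write the box slice as $\calb_\infty(A_C,b_C)$, let $N\triangleq\{\dvec\mid A_C\dvec=0\}$ be the subspace parallel to the affine set $\{A_C\dvec=b_C\}$, and let $L\subseteq N$ be the subspace parallel to $\affHull{\calb_\infty(A_C,b_C)}$ (so $L$ may be strictly smaller than $N$ when some box facets are active throughout the slice). The lemma is: $[G_C;A_C]$ has full row rank if and only if $A_C$ has full row rank and the restriction $G_C|_N\colon N\to\bbr^n$ is surjective. This is immediate from transposing: $G_C^\top\lambda+A_C^\top\nu=0$ forces $\lambda=\nu=0$ exactly when $[G_C;A_C]^\top$ has full column rank, and ``$G_C^\top\lambda\in\operatorname{range}(A_C^\top)\Rightarrow\lambda=0$'' is precisely surjectivity of $G_C|_N$. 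I would also record the geometric dictionary: since $\calc=c_C+G_C\,\calb_\infty(A_C,b_C)$, the affine dimension of $\calc$ equals $\dim(G_C L)$.

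Statement 1 is then immediate: full-dimensionality gives $n=\dim(G_C L)\le\operatorname{rank}(G_C)\le\min(n,\ndvec{C})$, so $\operatorname{rank}(G_C)=n\le\ndvec{C}$. For Statement 3, I would first argue that Step 2 of Algo.~\ref{algo:min_row} leaves the set unchanged: deleting equality rows that are linear combinations of the retained rows preserves the solution set of $A_C\dvec=b_C$ (consistency is inherited from feasibility of the original system), so $\calb_\infty(A_C',b_C')=\calb_\infty(A_C,b_C)$ and $\operatorname{null}(A_C')=N$. By construction $A_C'$ has full row rank, so by the lemma it remains to check $G_C|_N$ is onto $\bbr^n$; here full-dimensionality enters through the dictionary, $\bbr^n=G_C L\subseteq G_C N$, whence $G_C N=\bbr^n$. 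This proves Statement 3, and the forward half of Statement 2 (``full-dimensional $\Rightarrow$ non-empty with a MinRow representation'') follows because Algo.~\ref{algo:min_row} exhibits one.

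Statements 4 and 5 are constructive verifications. For Statement 5, invertibility of $[G_C;A_C]$ lets me solve $[G_C;A_C]\dvec=[x-c_C;\,b_C]$ for the unique $\dvec=[G_C;A_C]^{-1}([I_n;0]x+[-c_C;b_C])$, which automatically satisfies $A_C\dvec=b_C$; hence $x\in\calc$ iff $\|\dvec\|_\infty\le1$, i.e. $[I_{\ndvec{C}};-I_{\ndvec{C}}]\dvec\le1$, and substituting the expression for $\dvec$ rearranges exactly into \eqref{eq:hrep_from_cz}. For Statement 4, I would first note the dimension bookkeeping $\ndvec{C}=n+\nineq{C}$ and $\nconst{C}=\nineq{C}$ (so the output is Invertible), then read invertibility of $[G_C;A_C]$ off its block-lower-triangular form, whose determinant is $\det(G_Z)\det(\operatorname{diag}((\sigma-k)/2))$: full-dimensionality forces $u_i>l_i$ (so $G_Z$ is invertible) and, because an interior point of $\calc$ satisfies each constraint strictly, $\sigma_i\le\min_{x\in\calc}h_i^\top x<k_i$ (so the second block is invertible). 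The remaining content is to check the construction represents $\calc$ exactly: writing $x=c_Z+G_Z\dvec_{1:n}$ turns $H_Cx\le k_C$ into $H_CG_Z\dvec_{1:n}\le k_C-H_Cc_Z$, and the slack generators scaled by $\operatorname{diag}((\sigma-k)/2)$ convert these inequalities into the equalities $A_C\dvec=b_C$ inside the unit box, using that $[\sigma_i,k_i]$ is exactly the range of $h_i^\top x$ over the bounding box $\calz$.

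The step I expect to be the main obstacle is the converse half of Statement 2, ``non-empty with a MinRow representation $\Rightarrow$ full-dimensional.'' Via the dictionary this asks for $\dim(G_C L)=n$, but the lemma only delivers $G_C N=\bbr^n$, and $L$ can be strictly smaller than $N$: if box facets are active across the entire slice, the slice collapses to a lower-dimensional face (in the extreme, a single vertex) while $[G_C;A_C]$ still has full row rank, so the represented set is lower-dimensional. The converse therefore cannot follow from MinRow and non-emptiness alone; its proof must additionally produce a feasible $\dvec$ in the relative interior of the unit cube, forcing $L=N$. Supplying this non-degeneracy of the box slice --- equivalently, ruling out slices pinned to a face --- is the crux, and I would expect it to be either imposed on the admissible representations or extracted from how they are generated.
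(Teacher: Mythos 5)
Your treatment of parts 1, 3, 4, 5 and of the forward half of part 2 is correct, and it is organized differently from the paper's proof. The paper argues each part separately by contradiction: for 1) it exhibits a hyperplane $\{x \mid \alpha^\top x = \alpha^\top c_C\}$ containing $\calc$ whenever $G_C$ has a left null vector; for 3) it proves set-invariance of Step~\ref{step:ACprime} via the factorization $[A_C,b_C]=[I_{\nconst{C}'};E][A_C',b_C']$ and then rules out a dependence $\beta_1^\top G_C+\beta_2^\top A_C'=0$ by again producing a hyperplane containing $\calc$; for 4) and 5) it uses the same lower-triangular determinant and inverse-solve arguments you give. Your single rank lemma ($[G_C;A_C]$ has full row rank iff $A_C$ does and $G_C$ maps $\operatorname{null}(A_C)$ onto $\bbr^n$), combined with the dictionary $\affHull{\calc}=c_C+G_C\,\affHull{\calb_\infty(A_C,b_C)}$, packages both contradiction arguments into one reusable statement and, crucially, makes the distinction between $N=\operatorname{null}(A_C)$ and the possibly smaller subspace $L$ explicit. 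The paper's route is more elementary but hides exactly that distinction. One small patch to your write-up: the forward half of 2 needs the cited fact (\cite[Thm.~1]{scott_constrained_2016}) that every polytope admits \emph{some} constrained zonotope representation before Algo.~\ref{algo:min_row} can be invoked.

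On the converse half of part 2 you are right, not merely stuck: the claim is false as stated, and the paper's own proof of it contains an unjustified inference. The paper writes $\calc=\calc\cap\{x\mid\alpha^\top x=\beta\}$, appends the constraint row $[\alpha^\top G_C,\ \beta-\alpha^\top c_C]$ via \eqref{eq:intersection_CZ_H}-type intersection, and then asserts that equality of the two represented sets forces this row to equal $\gamma^\top[A_C,b_C]$ for some $\gamma$. Equality of sets does not imply any such row-equivalence of representations, precisely because of the facet-pinning degeneracy you describe. Concretely: take $n=1$, $G_C=[1,-1]$, $c_C=0$, $A_C=[1,1]$, $b_C=2$. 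Then $[G_C;A_C]$ is invertible, so this is even an \textsc{Invertible} (hence \textsc{MinRow}) representation; the set is non-empty since $A_C\dvec=b_C$ with ${\|\dvec\|}_\infty\leq 1$ forces $\dvec=(1,1)$; but then $\calc=\{0\}$, which is not full-dimensional in $\bbr^1$. Adding the row $\alpha^\top G_C=[1,-1]$, $\beta-\alpha^\top c_C=0$ (with $\alpha=1$, $\beta=0$) leaves the set unchanged, yet no $\gamma$ satisfies both $[1,-1]=\gamma[1,1]$ and $0=2\gamma$ --- this is exactly where the paper's argument breaks. Your proposed fix is the correct one: the converse holds if one additionally assumes a Slater point for the box slice, i.e., some $\dvec$ with $A_C\dvec=b_C$ and ${\|\dvec\|}_\infty<1$, which forces $L=N$ and hence $\dim(G_C L)=n$; without such a nondegeneracy hypothesis the implication cannot be saved. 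Note that the remainder of the paper (e.g., the proofs of Prop.~\ref{prop:reverse} and the discussion in Sec.~\ref{sub:comparison}) only invokes the forward implication of part 2, so this error appears contained.
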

See Sec.~\ref{app:proofs_aux_prop_indep} for the proof of Prop.~\ref{prop:Indep}.
Step~\ref{step:ACprime} in Algo.~\ref{algo:min_row} removes redundant equalities in $\{\dvec\mid A_C\dvec=b_C\}$. See \texttt{mpt\_minAffineRep} in MPT3~\cite{MPT3} for an implementation.

By Prop.~\ref{prop:Indep}.4, an \textsc{Invertible} representation also exists for every full-dimensional constrained zonotope. However, it is unclear if we can efficiently compute such a representation from an arbitrary full-dimensional constrained zonotope, as done in Algo.~\ref{algo:min_row}.

\begin{remark}
The rest of the paper assumes the use of a \textsc{MinRow} representation for any full-dimensional constrained zonotope, obtained using either Algo.~\ref{algo:min_row} or~\ref{algo:invertible}.\label{rem:min_row}
\end{remark}

\subsection{Inner-approximation of the Pontryagin difference}
\label{sub:general}

Thm.~\ref{thm:pdiff_general} tackles the inner-approximation part of  Prob.~\ref{prob_st:pdiff}.

\begin{theorem}\label{thm:pdiff_general}
    Given a non-empty constrained zonotopic minuend $\calc=(G_C,c_C,A_C,b_C)\subset \bbr^n$ and a convex and compact subtrahend
    $\cals\subset \bbr^n$ that is symmetric about any $c_S\in\bbr^n$. Let 
    $\Gamma:\bbr^{\ndvec{C} \times n}$ solve 
    \begin{align}
        [G_C;A_C] \Gamma = [I_n;0_{\nconst{C}\times n}],\label{eq:Gamma_linear_equations}
    \end{align}
    and $D\in\bbr^{\ndvec{C} \times \ndvec{C}}$ be a diagonal matrix with 
    \begin{align}
        D_{ii}&\triangleq 1 - \rho_{\cals_0}\left({ e_i^\top \Gamma}\right)\label{eq:diag_matrix_general},
    \end{align}
    for each $i\in\Nint{1}{\ndvec{C}}$, where $\cals_0\triangleq \cals -
    c_S$.  
    Then, the constrained zonotope $\calm^-$ defined using \eqref{eq:pdiff_inner_approx_parameters} and $D$ in \eqref{eq:diag_matrix_general} satisfies
    \eqref{eq:pdiff_inner_approx}, provided $D_{ii} \geq 0$ for every $i\in\Nint{1}{\ndvec{C}}$.
\end{theorem}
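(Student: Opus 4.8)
The plan is to verify the Pontryagin-difference inclusion \eqref{eq:pdiff_inner_approx} directly from the definition \eqref{eq:pdiff}: I will take an arbitrary point of $\calm^-$ and an arbitrary point of $\cals$, and exhibit an explicit generator vector witnessing that their sum lies in $\calc$. Concretely, write a generic element of $\calm^-$ as $m = G_C D \dvec + c_C - c_S$ with ${\|\dvec\|}_\infty \leq 1$ and $A_C D\dvec = b_C$ (using \eqref{eq:pdiff_inner_approx_parameters}), and a generic element of $\cals$ as $s = c_S + s_0$ with $s_0 \in \cals_0 = \cals - c_S$. Since $m + s = G_C D\dvec + s_0 + c_C$, showing $m + s \in \calc$ reduces to finding $\eta \in \bbr^{\ndvec{C}}$ with $G_C\eta = G_C D\dvec + s_0$, $A_C\eta = b_C$, and ${\|\eta\|}_\infty \leq 1$, i.e.\ $\eta \in \calb_\infty(A_C,b_C)$ as in \eqref{eq:b_infty}.

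The natural candidate is $\eta \triangleq D\dvec + \Gamma s_0$, built from the solution $\Gamma$ of \eqref{eq:Gamma_linear_equations}. First I would read off from \eqref{eq:Gamma_linear_equations} the two block identities $G_C\Gamma = I_n$ and $A_C\Gamma = 0_{\nconst{C}\times n}$; these immediately give $G_C\eta = G_C D\dvec + s_0$ and $A_C\eta = A_C D\dvec = b_C$, the last equality being exactly the equality constraint defining $\calm^-$. Thus the affine-image and equality parts of membership in $\calc$ hold by construction, independent of the shape of $\cals$.

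The crux is the box constraint ${\|\eta\|}_\infty \leq 1$, which I verify componentwise. For each $i\in\Nint{1}{\ndvec{C}}$ we have $\eta_i = D_{ii}\dvec_i + (e_i^\top\Gamma)s_0$. The first term is bounded by $|D_{ii}\dvec_i| \leq D_{ii}$, using $|\dvec_i|\leq 1$ together with the hypothesis $D_{ii}\geq 0$. For the second term I would invoke the symmetry of $\cals_0$ about the origin: since $s_0 \in \cals_0$ implies $-s_0 \in \cals_0$, the support function \eqref{eq:support} satisfies $|(e_i^\top\Gamma)s_0| \leq \rho_{\cals_0}(e_i^\top\Gamma)$, which by \eqref{eq:diag_matrix_general} equals $1 - D_{ii}$. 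Adding the two bounds yields $|\eta_i| \leq D_{ii} + (1 - D_{ii}) = 1$.

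The main obstacle is essentially this last estimate, since it is where the precise definition of $D_{ii}$ and the symmetry of $\cals$ are forced upon us: the contribution of $\Gamma s_0$ must exactly consume the slack $1 - D_{ii}$ left over after the $D\dvec$ contribution, and the nonnegativity assumption $D_{ii}\geq 0$ is precisely what makes $|D_{ii}\dvec_i| \leq D_{ii}$ (rather than the useless $|D_{ii}|$) the right bound. Having established $\eta \in \calb_\infty(A_C,b_C)$, I conclude $m + s = G_C\eta + c_C \in \calc$; since $m \in \calm^-$ and $s \in \cals$ were arbitrary, $m + v \in \calc$ for every $v \in \cals$, i.e.\ $m \in \calc\ominus\cals$, which is exactly \eqref{eq:pdiff_inner_approx}.
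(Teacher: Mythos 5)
Your proof is correct, and it takes a genuinely more direct route than the paper. The paper works at the level of sets: it lifts $\cals_0$ into generator space as $\calv_0=\Gamma\cals_0$, forms the Pontryagin difference $\calc_L=\calb_\infty(A_C,b_C)\ominus\calv_0$ there, pushes it forward through $G_C$ using two cited lemmas on Pontryagin differences under linear maps and translations (\cite[Lem. 2(ii)]{yang_efficient_2022}, \cite[Thm. 2.1.iv]{kolmanovsky1998theory}), and then explicitly computes $\calc_L$ via support functions to show it equals $D\calb_\infty(A_CD,b_C)$ when $D_{ii}\geq 0$, so that the abstract set matches the claimed parameterization \eqref{eq:pdiff_inner_approx_parameters}. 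You instead verify the definition \eqref{eq:pdiff} pointwise with the explicit certificate $\eta=D\dvec+\Gamma s_0$; the block identities $G_C\Gamma=I_n$, $A_C\Gamma=0$ handle the affine and equality constraints, and the componentwise bound $|\eta_i|\leq D_{ii}|\dvec_i|+\rho_{\cals_0}(\Gamma^\top e_i)\leq D_{ii}+(1-D_{ii})=1$ (using symmetry of $\cals_0$ and $D_{ii}\geq 0$) handles the box constraint. The core inequality is the same one hiding in the paper's chain \eqref{eq:general_interim_6}--\eqref{eq:general_interim_8}, but your argument is self-contained and needs no external set-algebra lemmas, which is a real simplification. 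What the paper's heavier machinery buys is the exact identity $\calc_L=D\calb_\infty(A_CD,b_C)$ (not just an inclusion): this explains why $\calm^-=\emptyset$ whenever some $D_{ii}<0$, exposes the geometric link to the two-stage approach of~\cite{yang_efficient_2022} discussed in Sec.~\ref{sub:comparison}, and is reused verbatim in the exactness proof of Prop.~\ref{prop:exact_general}, where \eqref{eq:general_interim_8} is cited; your pointwise argument, being one-directional, would not supply that reusable identity.
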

\begin{proof}
    Given $\cals_0\subset\bbr^n$, define $\calv_0 = \Gamma\cals_0\subset\bbr^{\ndvec{C}}$. 
    By definition of $\Gamma$, $\calv_0$ satisfies 1)
    $G_C\calv_0=\cals_0=\cals - c_S$, and 2) $A_C\calv_0=\{0_{\nconst{C}\times 1}\}$. 
    
    Next, define $\calc_{L} \triangleq\calb_\infty(A_C,b_C)\ominus \calv_0\subset
    \bbr^{\ndvec{C}}$, and
    define 
    \begin{align}
        \calm^-\triangleq c_{M^-} + G_C\calc_L.\label{eq:m_minus_defn_general}
\end{align}
with $c_{M^-}$ in \eqref{eq:pdiff_inner_approx_parameters}. 
Recall that for any set $\cala,\calb\subset \bbr^n$ and a matrix $M\in\bbr^{m\times n}$ with $m<n$, $M(\cala\ominus\calb)\subseteq(M\cala)\ominus(M\calb)$~\cite[Lem. 2(ii)]{yang_efficient_2022}.
From~\cite[Thm. 2.1.iv]{kolmanovsky1998theory}, 
\begin{align}
     \calm^-&\subseteq c_{M^-} + (G_C\calb_\infty(A_C,b_C)\ominus
(G_C\calv_0)) \nonumber\\
                     &=c_{M^-} + ((\calc - c_C) \ominus (\cals - c_S))\nonumber\\
                     &=(c_{M^-} - (c_C - c_S)) + (\calc \ominus \cals) =\calc \ominus \cals.\label{eq:subset_relation_general}
\end{align}
Next, we simplify $\calc_L$ as follows, %
\begingroup
    \makeatletter\def\f@size{9.5}\check@mathfonts
    \begin{align}
    \calc_L&=\{\dvec \mid \forall v\in\calv_0,\ \dvec + v\in\calb_\infty(A_C,b_C)\}
    \label{eq:general_interim_1}\\
                 &=\{\dvec\mid 
            \forall v\in\calv_0,\ 
        A_C(\dvec+v)=b_C,\ {\|\dvec+v\|}_\infty\leq 1
    \} \label{eq:general_interim_2}\\
                 &=\{\dvec\mid 
            A_C\dvec=b_C,\ 
            \forall v\in\calv_0,\ 
        {\|\dvec+v\|}_\infty\leq 1\} \label{eq:general_interim_3}\\
                 &=\{\dvec\mid  
                 A_C\dvec=b_C\}\cap\left(\{\dvec\mid 
        {\|\dvec\|}_\infty\leq 1\}\ominus\calv_0\right) \label{eq:general_interim_4}\\
                 &=\left\{\dvec\ \middle\vert
                     \begin{array}{c}
                         A_C\dvec=b_C,\ \forall i\in\Nint{1}{\ndvec{C}},\ \forall \delta\in\{-1,1\},\\
                     \delta e_i^\top \dvec \leq 1 - \rho_{\calv_0}\left(\delta e_i\right)
             \end{array}\right\} \label{eq:general_interim_6}\\
                 &=\left\{\dvec\ \middle\vert\ 
                     \begin{array}{c}
                         A_C\dvec=b_C,\ \forall i\in\Nint{1}{\ndvec{C}},\\
                     | e_i^\top \dvec | \leq 1 - \rho_{\cals_0}\left(\Gamma^\top
                     e_i\right)
             \end{array}\right\} \label{eq:general_interim_7}\\
                 &=\left\{\dvec\ \middle\vert
                         A_C\dvec=b_C,\ \forall i\in\Nint{1}{\ndvec{C}},\ \left|e_i^\top \dvec \right| \leq D_{ii}
             \right\} \label{eq:general_interim_8}.
\end{align}
\endgroup
Here, 
\eqref{eq:general_interim_1} follows from \eqref{eq:pdiff} and the definition of $\calc_L$, 
\eqref{eq:general_interim_2} follows from \eqref{eq:b_infty}, 
\eqref{eq:general_interim_3} follows from the choice of $\calv_0$,
\eqref{eq:general_interim_4} follows from \eqref{eq:pdiff},
\eqref{eq:general_interim_6} follows from \eqref{eq:pdiff_polytope},
\eqref{eq:general_interim_7} follows from \eqref{eq:support} and from
$\rho_{\calv_0}(\nu)=\rho_{\cals_0}(\Gamma^\top \nu)$ for all $\nu\in\bbr^{\ndvec{C}}$~\cite[Prop. 2]{girard_efficient_2008}, and from symmetry of $\cals_0$ about the origin implying $\rho_{\cals_0}(-\mu)=\rho_{\cals_0}(\mu)$ for all $\mu\in\bbr^n$ by
\eqref{eq:support},
and \eqref{eq:general_interim_8} follows from 
the definition of $D_{ii}$ \eqref{eq:diag_matrix_general}.

If $D_{ii}< 0$ for any $i\in\Nint{1}{\ndvec{C}}$, then $\calc_L=\emptyset$ by \eqref{eq:general_interim_8}, and $\calm^-$ defined in  \eqref{eq:m_minus_defn_general} is also empty. On the other hand, when $D_{ii}\geq 0$ for all $i\in\Nint{1}{\ndvec{C}}$, 
\eqref{eq:general_interim_8} may be expressed as a scaled version of $\calb_\infty$,
\begin{align}
 \calc_L&=\{D\dvec\mid 
             A_C D\dvec=b_C,\ {\|\dvec\|}_\infty \leq 1\}=D\calb_\infty(A_CD,b_C).\nonumber%
\end{align}
Thus, $\calm^-\subseteq \calc\ominus \cals$ in \eqref{eq:pdiff_inner_approx} using \eqref{eq:m_minus_defn_general} and
\eqref{eq:subset_relation_general}, provided $D_{ii}\geq 0$ for all $i\in\Nint{1}{\ndvec{C}}$. 
\end{proof}

Thm.~\ref{thm:pdiff_general} inner-approximates the Pontryagin difference \eqref{eq:pdiff} using the support function
of the subtrahend $\cals$. 
The condition $D_{ii}\geq 0$
for all $i\in\Nint{1}{\ndvec{C}}$ in Thm.~\ref{thm:pdiff_general} may be viewed as requiring $\Gamma$ to additionally
satisfy $\Gamma\cals_0\subseteq \{\dvec\mid
\|\dvec\|_\infty \leq 1\}$. Such a choice ensures $\rho_{\cals_0}\left(\Gamma^\top
e_i\right)\leq 1$, and thereby, $D_{ii}\geq 0$~\cite[Ex. 3.35(d)]{boyd2004convex}.

\begin{proposition}\label{prop:pdiff_general_l2}
    For a full-dimensional, constrained zonotopic minuend $\calc$ and a convex and compact subtrahend
    $\cals\subset \bbr^n$ that is symmetric about any $c_S\in\bbr^n$, the constrained zonotope $\calm^-$ defined in Thm.~\ref{thm:pdiff_general} with
    $\Gamma=[G_C;A_C]^\dagger[I_n;0_{\nconst{C}\times n}]$ inner-approximates $\calc\ominus\cals$.
\end{proposition}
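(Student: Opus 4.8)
The plan is to read Prop.~\ref{prop:pdiff_general_l2} as the specialization of Thm.~\ref{thm:pdiff_general} to the minimum-norm (pseudoinverse) solution of the defining linear system \eqref{eq:Gamma_linear_equations}. The whole argument therefore reduces to checking that the proposed $\Gamma = [G_C;A_C]^\dagger[I_n;0_{\nconst{C}\times n}]$ is (i) well-defined and (ii) an actual solution of \eqref{eq:Gamma_linear_equations}; once these hold, the inner-approximation guarantee — together with the sign proviso $D_{ii}\geq 0$ — is inherited verbatim from Thm.~\ref{thm:pdiff_general}.

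First I would establish that the right pseudoinverse exists. Since $\calc$ is full-dimensional, by Rem.~\ref{rem:min_row} it may be taken in \textsc{MinRow} representation, so Defn.~\ref{defn:MinRow} guarantees that the stacked matrix $[G_C;A_C]\in\bbr^{(n+\nconst{C})\times\ndvec{C}}$ has full row rank. Then $[G_C;A_C][G_C;A_C]^\top$ is invertible, the right pseudoinverse $[G_C;A_C]^\dagger=[G_C;A_C]^\top([G_C;A_C][G_C;A_C]^\top)^{-1}$ is well-defined, and hence so is $\Gamma\in\bbr^{\ndvec{C}\times n}$.

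Next I would verify that this $\Gamma$ solves \eqref{eq:Gamma_linear_equations}. By the defining property of the right pseudoinverse recalled in Sec.~\ref{sec:prelim}, $[G_C;A_C][G_C;A_C]^\dagger=I_{n+\nconst{C}}$, so $[G_C;A_C]\Gamma=[G_C;A_C][G_C;A_C]^\dagger[I_n;0_{\nconst{C}\times n}]=[I_n;0_{\nconst{C}\times n}]$, which is exactly \eqref{eq:Gamma_linear_equations}. Thus $\Gamma$ is an admissible choice in Thm.~\ref{thm:pdiff_general}, and building $D$ via \eqref{eq:diag_matrix_general} and $\calm^-$ via \eqref{eq:pdiff_inner_approx_parameters} yields $\calm^-\subseteq\calc\ominus\cals$ whenever $D_{ii}\geq 0$ for all $i$.

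The one point I would flag, rather than grind through, is the status of $D_{ii}\geq 0$: it is not automatic for the pseudoinverse choice, since a sufficiently large $\cals$ can force $\rho_{\cals_0}(\Gamma^\top e_i)>1$ and hence $D_{ii}<0$ (a regime where $\calc\ominus\cals$ is itself small or empty), so the proposition carries the same proviso as Thm.~\ref{thm:pdiff_general}. I would add that the pseudoinverse is the natural selection precisely because it returns the minimum-norm solution of the underdetermined system \eqref{eq:Gamma_linear_equations} in closed form, in keeping with the paper's least-squares viewpoint; the explicit $D_{ii}$ formulas of Table~\ref{tab:theory} and Corr.~\ref{corr:specific} then follow immediately by substituting the support-function expressions \eqref{eq:support_sets} into \eqref{eq:diag_matrix_general} with this $\Gamma$.
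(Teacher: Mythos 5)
Your proof is correct and is essentially the paper's own argument: the paper's proof of Prop.~\ref{prop:pdiff_general_l2} is just ``Follows from Rem.~\ref{rem:min_row} and Thm.~\ref{thm:pdiff_general},'' and you have simply made explicit the two facts this invokes (the \textsc{MinRow} representation gives $[G_C;A_C]$ full row rank so the right pseudoinverse exists, and the right-inverse identity shows $\Gamma$ solves \eqref{eq:Gamma_linear_equations}), after which Thm.~\ref{thm:pdiff_general} applies verbatim. Your flagging of the $D_{ii}\geq 0$ proviso is also consistent with how the paper handles it (Step~\ref{step:check} of Algo.~\ref{algo:inner_approx} returns $\emptyset$ otherwise).
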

\begin{proof}
    Follows from Rem.~\ref{rem:min_row} and Thm.~\ref{thm:pdiff_general}.
\end{proof}
Prop.~\ref{prop:pdiff_general_l2} addresses Prob.~\ref{prob_st:pdiff}.
Additionally, we use the following two observations to illustrate that the assumption of full-dimensionality in Prop.~\ref{prop:pdiff_general_l2} is not restrictive. First, 
$\calc\ominus\cals=\emptyset$, whenever the affine dimension of $\cals$ is strictly greater than the affine dimension of
$\calc$ by \eqref{eq:pdiff}, since $x+\cals$ can not be contained in $\calc$ for any $x\in\bbr^n$ in such a scenario.
Second, consider the case where both minuend and subtrahend are not full-dimensional and there exists a matrix $T\in\bbr^{n\times n'}$ with $n>n'$ and $\text{rank}(T)=n'$ such that $\calc=T\calc'$ and $\cals=T\cals'$ for any $\calc',\cals'\subset\bbr^{n'}$.
Then, by~\cite[Thm. 2.1.viii]{kolmanovsky1998theory}, 
\begin{align}
    \calc\ominus\cals=(T\calc')\ominus(T\cals')=T(\calc'\ominus\cals')\label{eq:low_dim}.
\end{align}
In such a case, it suffices to assume that $\calc'$ is full-dimensional in order to obtain a constrained zonotope inner-approximation of $\calc\ominus\cals$.

\begin{corollary}[\textsc{Special cases}]\label{corr:specific}
    For a full-dimensional constrained zonotopic minuend $\calc$, 
    $D$ in \eqref{eq:diag_matrix_general} has closed-form expressions for $D_{ii}$ for each $i\in\Nint{1}{\ndvec{C}}$:\\
    1) when $\cals$ is a zonotope \eqref{eq:zonotope}, 
    \begin{align}
        D_{ii} = 1 - {\left\|{e_i^\top [G_C;A_C]^\dagger[G_S;0_{\nconst{C}\times
\ndvec{S}}]}\right\|}_1
        \label{eq:D_ii_zonotope},
    \end{align}
    2) when $\cals$ is an ellipsoid \eqref{eq:ellipsoid}, 
    \begin{align}
        D_{ii} = 1 - {\left\|{e_i^\top [G_C;A_C]^\dagger[G_S;0_{\nconst{C}\times
\ndvec{S}}]}\right\|}_2
        \label{eq:D_ii_ellipsoid},
    \end{align}
    3) when $\cals$ is a convex union of symmetric intervals \eqref{eq:cui}, 
    \begin{align}
        D_{ii} = 1 - {\left\|{e_i^\top [G_C;A_C]^\dagger[G_S;0_{\nconst{C}\times
\ndvec{S}}]}\right\|}_\infty
        \label{eq:D_ii_cui}.
    \end{align}
\end{corollary}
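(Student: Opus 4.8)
The plan is to read the result off directly from Prop.~\ref{prop:pdiff_general_l2}: that proposition already fixes $\Gamma = [G_C;A_C]^\dagger[I_n;0_{\nconst{C}\times n}]$ and, through \eqref{eq:diag_matrix_general}, gives $D_{ii} = 1 - \rho_{\cals_0}(e_i^\top\Gamma)$ with $\cals_0 = \cals - c_S$. So all that remains is to evaluate the support function $\rho_{\cals_0}$ in closed form for the three families $\calz$, $\cale$, $\cali$ and then simplify the resulting matrix expression. For each family I would take $c_S$ to be the symmetry center ($c_Z$, $c_E$, or $c_I$), which is legitimate since symmetry about the center is built into \eqref{eq:set_representations_specific}; then $\cals_0$ is the corresponding origin-centered set $\{G_S\dvec \mid \|\dvec\|_\bullet \leq 1\}$.

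First I would specialize the support-function identities in \eqref{eq:support_sets}. Using $\rho_{\cals - c_S}(\nu) = \rho_{\cals}(\nu) - \nu^\top c_S$, the linear term drops out for the centered set, leaving $\rho_{\cals_0}(\nu) = \|G_S^\top\nu\|_1$ for a zonotope, $\|G_S^\top\nu\|_2$ for an ellipsoid, and $\|G_S^\top\nu\|_\infty$ for a convex union of symmetric intervals, i.e.\ a single $\ell_p$-norm with $p\in\{1,2,\infty\}$ respectively.

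Next I would substitute $\nu = (e_i^\top\Gamma)^\top = \Gamma^\top e_i$ and push the matrices through. Writing $G_S^\top\Gamma^\top e_i = (\Gamma G_S)^\top e_i$ and using that a vector and its transpose share the same $\ell_p$-norm, I obtain $\rho_{\cals_0}(e_i^\top\Gamma) = \|e_i^\top\Gamma G_S\|_p$. The definition of $\Gamma$ then telescopes the product, $\Gamma G_S = [G_C;A_C]^\dagger[I_n;0_{\nconst{C}\times n}]G_S = [G_C;A_C]^\dagger[G_S;0_{\nconst{C}\times\ndvec{S}}]$, where the zero block has width $\ndvec{S}$ in the zonotopic and interval cases and width $n$ in the ellipsoidal case (since $G_S\in\bbr^{n\times n}$ there). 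Substituting back into $D_{ii} = 1 - \|e_i^\top\Gamma G_S\|_p$ yields exactly \eqref{eq:D_ii_zonotope}, \eqref{eq:D_ii_ellipsoid}, and \eqref{eq:D_ii_cui}.

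The argument is elementary, so I do not anticipate a genuine obstacle; the only points needing care are the transpose and dimension bookkeeping in the final matrix product, and the justification that the center term cancels, both of which are routine given the symmetry structure encoded in \eqref{eq:set_representations_specific}.
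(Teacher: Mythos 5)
Your proposal is correct and follows exactly the paper's route: the paper's proof of Corr.~\ref{corr:specific} is the one-line observation that it ``follows from \eqref{eq:support_sets} and Prop.~\ref{prop:pdiff_general_l2},'' and your argument simply fills in those details (centering via $c_S$, the dual-norm support functions, and the telescoping $\Gamma G_S = [G_C;A_C]^\dagger[G_S;0_{\nconst{C}\times\ndvec{S}}]$). The bookkeeping you flag, including the width-$n$ zero block in the ellipsoidal case, is handled correctly.
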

\begin{proof}
    Follows from \eqref{eq:support_sets} and Prop.~\ref{prop:pdiff_general_l2}.
\end{proof}
Corr.~\ref{corr:specific} lists some broad classes of subtrahends that admits closed-form expressions for 
the inner-approximation $\calm^-$ of the Pontryagin difference $\calc\ominus\cals$.

\begin{algorithm}[t]
\caption{Inner-approximation of 
$\calc\ominus\cals$}\label{algo:inner_approx}
\begin{algorithmic}[1]
    \Require Minuend $\calc=(G_C,c_C,A_C,b_C)$ that is full-dimensional and subtrahend $\cals$ that is convex, compact, and symmetric about any $c_S\in\bbr^n$ 

    \Ensure Constrained zonotope $\calm^-\subseteq \calc\ominus \cals$
    
    \State $\cals_0\gets \cals - c_S$

    \State $\Gamma\gets[G_C;A_C]^\dagger[I_n;0_{\nconst{C}\times n}]$\label{step:Gamma}

    \State $D\gets\operatorname{diag}([1 -
    \rho_{\cals_0}(e_1^\top \Gamma);\ldots;1 -
    \rho_{\cals_0}(e_{\ndvec{C}}^\top \Gamma)])$ \label{step:d_ii_compute}

    \State 
\begingroup
    \makeatletter\def\f@size{8.5}\check@mathfonts
 $\calm^-\gets
    \begin{cases}
        \begin{array}{ll}
            (G_C D, c_C - c_S, A_C D, b_C), &\quad \min\limits_{i\in\Nint{1}{\ndvec{C}}} D_{ii}\geq 0,\\
            \emptyset, &\quad\text{otherwise}
        \end{array}.
    \end{cases}$\label{step:check}
\endgroup
  \end{algorithmic}
\end{algorithm}
Algo.~\ref{algo:inner_approx} summarizes the procedure described in Prop.~\ref{prop:pdiff_general_l2} to inner-approximate $\calc\ominus\cals$ when
the minuend $\calc$ is full-dimensional.
For subtrahends that are ellipsoids, zonotopes, or convex unions of symmetric 
intervals, Step~\ref{step:d_ii_compute} in Algo.~\ref{algo:inner_approx} is available in closed form (see Corr.~\ref{corr:specific}), and Algo.~\ref{algo:inner_approx} is optimization-free, i.e., it does not require convex optimization solvers.

\begin{remark}
We can also use a ``template'' constrained zonotope $\calc^-\subseteq\calc$ when available and 
compute $\calc^-\ominus\cals\subseteq\calc\ominus\cals$ using Thm.~\ref{thm:pdiff_general}. Such an approach may help in overcoming conservativeness in certain cases, e.g., $\calm^-$ described in Thm.~\ref{thm:pdiff_general} is a zonotope for zonotopes $\calc,\cals$, even when $\calc\ominus\cals$ is known to be a constrained zonotope~\cite{yang_efficient_2022,raghuraman2022set}.
\end{remark}

\subsection{Relation of Algo.~\ref{algo:inner_approx} with existing two-stage approach for zonotopic subtrahend}
\label{sub:comparison}

For a zonotopic subtrahend $\cals$, the optimization-free Algo.~\ref{algo:inner_approx} is closely related to the optimization-based two-stage approach~\cite{yang_efficient_2022} (see Sec.~\ref{sub:prelim_set_operations}).

Let $\cals = G_S \cals_0 + c_S$ where $\cals_0$ is the unit $\ell_\infty$-norm
ball \eqref{eq:zonotope}.
Consider the \emph{matrix least norm} problem~\cite[Ex.
16.2]{boyd_introduction_2018}, which is similar to \eqref{eq:yang_pdiff_cz_minus_v_lp} where the objective is now the Frobenius norm $\|\Gamma\|_F\triangleq\sqrt{\sum_{i=1}^{\ndvec{C}}\sum_{j=1}^{\ndvec{S}}
\Gamma_{ij}^2}$,
\begin{align}
    \hspace*{-0.75em}\begin{array}{cl}
    \underset{\Gamma\in\bbr^{\ndvec{C}\times \ndvec{S}}}{\text{minimize}} &\ \ {\|\Gamma\|}_F\\
    \text{subject\ to} &\ \ [G_C;A_C]\Gamma=[G_S;0_{\nconst{C}\times\ndvec{S}}].
    \end{array}\label{eq:yang_frobenius}%
\end{align}
When $\calc$ is full-dimensional, the optimal solution of \eqref{eq:yang_frobenius} is 
available in closed-form, $\Gamma^\ast=[G_C;A_C]^\dagger[G_S;0_{\nconst{C}\times\ndvec{S}}]$, by Prop.~\ref{prop:Indep}.2 and~\cite[Ex.
16.2]{boyd_introduction_2018}. Observe that $\Gamma^\ast=\Gamma G_S$ for $\Gamma$ prescribed by
Prop.~\ref{prop:pdiff_general_l2}.
Then, we can recover $D$ prescribed by \eqref{eq:D_ii_zonotope} in Corr.~\ref{corr:specific} using
\eqref{eq:diag_matrix_general} in Thm.~\ref{thm:pdiff_general}, where
$\rho_{\cals_0}(\nu)=\|\nu\|_1$ instead of
\eqref{eq:support_zonotope}.

The condition $D_{ii}\geq 0$ for every $i\in\Nint{1}{\ndvec{C}}$ is required in the two-stage approach~\cite{yang_efficient_2022} (see
\eqref{eq:yang_pdiff_cz_minus_v_lp}) and in Step~\ref{step:check} of Algo.~\ref{algo:inner_approx}. 
Thus, 
Algo.~\ref{algo:inner_approx} and~\cite{yang_efficient_2022} differ only
in the choice of $\Gamma$ (or specifically, the choice of objective in \eqref{eq:yang_pdiff_cz_minus_v_lp}) for a zonotopic subtrahend. 
In other words, Algo.~\ref{algo:inner_approx} may be viewed as a generalization of~\cite{yang_efficient_2022} for symmetric, convex, and compact subtrahends, and does not require an optimization solver for certain subtrahends (Corr.~\ref{corr:specific}). 
We compare these approaches in various examples in Sec.~\ref{sec:num}.

\section{Outer-approximation of Pontryagin difference and sufficient conditions for exactness}
\label{sec:outer_approx}

In this section, we first address Prob.~\ref{prob_st:outer_approx}, and then use its solution to address the
outer-approximation part of Prob.~\ref{prob_st:pdiff}. We also provide sufficient conditions under which all proposed approximations are exact. We conclude this section with a discussion of implementation considerations for the proposed
algorithms.

\subsection{Outer-approximating convex polyhedron for a given constrained zonotope}

\begin{algorithm}[t]
\caption{Convex polyhedral outer-approximation of a constrained zonotope
$\calc$}\label{algo:outer_approx_hrep}
\begin{algorithmic}[1]
    \Require Full-dimensional $\calc=(G_C,c_C,A_C,b_C)$ 

    \Ensure Convex polyhedron $\calp=\{x\mid Hx\leq k\}\supseteq \calc$
    
    \State $V\gets[v_1;v_2; \ldots;
    v_{\ndvec{C}}]\in\bbr^{\ndvec{C}\times(n+\nconst{C})}$ with \label{step:nu_i_defn}
    \begin{align}
        v_i = \frac{e_i^\top{[G_C;A_C]}^\dagger}{{\left\|{e_i^\top{[G_C;A_C]}^\dagger[G_C;A_C]}\right\|}_1},\
        \forall i\in\Nint{1}{\ndvec{C}}\label{eq:nu_i_defn}
    \end{align}
    
    \State $H\gets [V;-V][I_n;0_{\nconst{C}\times n}]$\label{step:H_defn} %

    \State $k\gets 1_{2\ndvec{C}\times
    1} - [V;-V][-c_C;b_C]$\label{step:k_defn} %

  \end{algorithmic}
\end{algorithm}

We now address Prob.~\ref{prob_st:outer_approx} using Algo.~\ref{algo:outer_approx_hrep}. 
\begin{proposition}
    Given a full-dimensional constrained zonotope $\calc$,
    Algo.~\ref{algo:outer_approx_hrep} computes a   convex, polyhedral, outer-approximation of
    $\calc$ with at most $2\ndvec{C}$ linear constraints. 
    \label{prop:reverse}
\end{proposition}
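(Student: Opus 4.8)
The plan is to show that the half-space system $Hx \le k$ returned by Algo.~\ref{algo:outer_approx_hrep} is satisfied by every point of $\calc$, and that it has at most $2\ndvec{C}$ rows. The geometric idea is that each pair of rows encodes a bound of the form $|v_i[x - c_C;\,b_C]| \le 1$, obtained by lifting the box constraint $|\dvec_i|\le 1$ on the $i$-th generator coordinate back to the state $x$. Convexity and the polyhedral form are then immediate, since $\calp=\{x\mid Hx\le k\}$ is already in the shape of \eqref{eq:hrep_polytope}.

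First I would fix an arbitrary $x\in\calc$ and a feasible $\dvec$, so that $x = G_C\dvec + c_C$, $\|\dvec\|_\infty\le 1$, and $A_C\dvec = b_C$. The key algebraic observation is that these two relations combine into the single identity $[G_C;A_C]\dvec = [x - c_C;\,b_C]$. Writing $w_i\triangleq e_i^\top[G_C;A_C]^\dagger$ (well-defined by the \textsc{MinRow} assumption of Rem.~\ref{rem:min_row}, which guarantees the right pseudoinverse), I would left-multiply this identity by $w_i$ to obtain $w_i[G_C;A_C]\dvec = w_i[x - c_C;\,b_C]$, expressing the projected quantity $w_i[x-c_C;\,b_C]$ as a linear functional of the box variable $\dvec$.

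Next I would apply H\"older's inequality: since $\|\dvec\|_\infty\le 1$, we have $|w_i[G_C;A_C]\dvec|\le\|w_i[G_C;A_C]\|_1$. Dividing by $\|w_i[G_C;A_C]\|_1$ — which is exactly the normalizing denominator in \eqref{eq:nu_i_defn}, so that $v_i = w_i/\|w_i[G_C;A_C]\|_1$ and $\|v_i[G_C;A_C]\|_1 = 1$ — yields $|v_i[x - c_C;\,b_C]|\le 1$ for every $i$. It then remains to verify that unfolding $H=[V;-V][I_n;0_{\nconst{C}\times n}]$ and $k=1_{2\ndvec{C}\times 1}-[V;-V][-c_C;b_C]$ recovers precisely the two inequalities $v_i[x-c_C;\,b_C]\le 1$ and $-v_i[x-c_C;\,b_C]\le 1$: the selector $[I_n;0_{\nconst{C}\times n}]$ keeps only the first $n$ columns of $v_i$, so the $i$-th row of $Hx$ is $v_i^{(1)}x$ (with $v_i=[v_i^{(1)},v_i^{(2)}]$ split across the $G_C$- and $A_C$-blocks), while the constant $k_i$ shifts this to $v_i[x-c_C;\,b_C]$; the $-V$ block produces the mirrored inequality. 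Hence $x$ satisfies $Hx\le k$, giving $\calc\subseteq\calp$, and since $[V;-V]$ has $2\ndvec{C}$ rows the constraint count follows.

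The one point needing care is the well-definedness of $v_i$: the denominator $\|w_i[G_C;A_C]\|_1 = \|e_i^\top[G_C;A_C]^\dagger[G_C;A_C]\|_1$ can vanish, which happens exactly when the $i$-th column of $[G_C;A_C]$ is zero (equivalently, $e_i$ is orthogonal to the row space onto which $[G_C;A_C]^\dagger[G_C;A_C]$ projects). Such a generator coordinate affects neither $x$ nor any constraint, so the corresponding row may simply be dropped without changing $\calp$; this is precisely why the bound is stated as \emph{at most} $2\ndvec{C}$ constraints. I expect this degeneracy, rather than the H\"older estimate (which is routine), to be the only place requiring a careful remark.
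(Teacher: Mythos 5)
Your proof is correct, and at its core it is the same construction as the paper's: the identical normalized vectors $v_i$ from \eqref{eq:nu_i_defn}, the identical pair of inequalities $|v_i[x - c_C;\,b_C]|\le 1$, and the identical unfolding of $H$ and $k$ into $2\ndvec{C}$ rows. The difference is in how those inequalities are justified. The paper first invokes \emph{strong} LP duality (via refined Slater's condition, using full-dimensionality for non-emptiness) to represent $\calc$ exactly as the set where $\sup\nolimits_{\|[G_C;A_C]^\top\nu\|_1\le 1}\nu^\top[x-c_C;b_C]\le 1$, and then relaxes the supremum to the finitely many feasible points $\pm v_i^\top$; you instead verify each inequality directly by H\"older's inequality applied to the witness identity $[G_C;A_C]\dvec=[x-c_C;\,b_C]$. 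Your route is precisely the weak-duality half of the paper's argument made explicit, and it is all that the containment $\calc\subseteq\calp$ requires; the paper's stronger equality representation is only genuinely needed later, for the exactness result of Prop.~\ref{prop:exact_reverse}. So your argument proves the same statement in a marginally more elementary, self-contained way. Your closing remark about the vanishing denominator when the $i$-th column of $[G_C;A_C]$ is zero is also well taken: the paper's proof (and Algo.~\ref{algo:outer_approx_hrep} itself) silently assumes this degeneracy away, and dropping such rows, as you propose, handles it while remaining consistent with the ``at most $2\ndvec{C}$'' count in the statement.
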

See Sec.~\ref{app:proofs_aux_prop_reverse} for the proof. We used the observation that a full-dimensional $\calc$ may be expressed as the $1$-sublevel set of the optimal value function of a feasible linear program. 
Using strong duality, we obtain an outer-approximating convex polyhedron $\calp$, characterized by $2\ndvec{C}$ feasible solutions
\eqref{eq:nu_i_defn} to the corresponding dual problem.  

While~\cite[Prop.  3]{scott_constrained_2016} provides an exact H-Rep polytope  representation of $\calc$ using \emph{lifted zonotopes}, it may require a combinatorial number of hyperplanes, and may not be practical for large $n$ and/or $\ndvec{C}$. 
Instead,  Algo.~\ref{algo:outer_approx_hrep} outer-approximates $\calc$ with a convex
polyhedron $\calp$ that has at most $2\ndvec{C}$ halfspaces, and can be efficiently computed for $\calc$ with large $n$ and
$\ndvec{C}$ without relying on convex optimization solvers. Redundant inequalities in $\calp$ may be removed via linear programming~\cite{MPT3}, if desired.

\begin{remark}{\textsc{(H-Rep  outer-approximation)}}
    We can modify Algo.~\ref{algo:outer_approx_hrep} in the following ways:\newline
    1) Always return a H-Rep polytope $\calp\cap\{x \mid l\leq x \leq u\}$ with $l,u$ computed in \eqref{eq:invertible_zonotope}, where $\nineq{P}=2(\ndvec{C}+n)$.\newline
    2) Reduce approximation error by intersecting $\calp$ computed by Algo.~\ref{algo:outer_approx_hrep} with supporting hyperplanes \eqref{eq:support} evaluated along template directions (ray shooting)~\cite{gleason2021lagrangian}.\newline
    Both modifications incur additional computational cost.
\end{remark}

\subsection{Outer-approximation of the Pontryagin difference}

We now address the outer-approximation part of  Prob.~\ref{prob_st:pdiff}.

\begin{algorithm}[t]
\caption{Outer-approximation of 
$\calc\ominus\cals$}\label{algo:outer_approx}
\begin{algorithmic}[1]
    \Require Minuend $\calc=(G_C,c_C,A_C,b_C)$ that is full-dimensional and a convex and compact
    subtrahend $\cals$ that is symmetric about any $c_S\in\bbr^n$

    \Ensure Constrained zonotope $\calm^+\supseteq \calc\ominus \cals$
    
    \State Compute a convex polyhedron $\calp\supseteq \calc$ using Algo.~\ref{algo:outer_approx_hrep}\label{step:poly}.

    \State Compute a convex polyhedron
    $\calm^+_\text{p}=\calp\ominus\cals$ via \eqref{eq:pdiff_polytope}\label{step:M_poly}.

    \State Compute $\calm^+\gets (\calc - c_S)\cap \calm^+_\text{p}$ using \eqref{eq:intersection_CZ_H}
    \label{step:intersect}.

  \end{algorithmic}
\end{algorithm}

\begin{proposition}
    For a full-dimensional, constrained zonotopic minuend $\calc$ and a convex and compact subtrahend
    $\cals\subset \bbr^n$ that is symmetric about any $c_S\in\bbr^n$, Algo.~\ref{algo:outer_approx} returns a constrained zonotope $\calm^+\supseteq\calc\ominus\cals$. \label{prop:outer}
\end{proposition}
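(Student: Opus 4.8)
The plan is to verify the two assertions of the statement separately — that $\calm^+$ is a constrained zonotope, and that it contains $\calm=\calc\ominus\cals$ — by tracing the three steps of Algo.~\ref{algo:outer_approx}. Step~\ref{step:poly} produces a convex polyhedron $\calp\supseteq\calc$ by Prop.~\ref{prop:reverse}. The key set-theoretic observation is that the Pontryagin difference is monotone in its minuend: if $\calp\supseteq\calc$, then $\calp\ominus\cals\supseteq\calc\ominus\cals$, which follows directly from the definition \eqref{eq:pdiff}, since any $x$ with $x+v\in\calc$ for all $v\in\cals$ also satisfies $x+v\in\calp$. Hence the polyhedron $\calm^+_\text{p}=\calp\ominus\cals$ computed in Step~\ref{step:M_poly} via \eqref{eq:pdiff_polytope} already satisfies $\calm^+_\text{p}\supseteq\calc\ominus\cals$.

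The remaining containment I must establish is $\calc\ominus\cals\subseteq\calc-c_S$, so that intersecting with $\calc-c_S$ in Step~\ref{step:intersect} discards no point of $\calc\ominus\cals$. For this I would first argue that $c_S\in\cals$: since $\cals$ is nonempty, pick any $s\in\cals$; symmetry about $c_S$ gives $2c_S-s\in\cals$, and convexity of $\cals$ then places the midpoint $c_S$ in $\cals$. Now if $x\in\calc\ominus\cals$, the defining condition \eqref{eq:pdiff} applied with the particular choice $v=c_S\in\cals$ yields $x+c_S\in\calc$, i.e., $x\in\calc-c_S$. Combining this with the previous paragraph gives $\calc\ominus\cals\subseteq(\calc-c_S)\cap\calm^+_\text{p}=\calm^+$, which is the outer-approximation claim.

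To confirm that $\calm^+$ is a constrained zonotope, I would note that $\calc-c_S=(G_C,c_C-c_S,A_C,b_C)$ is itself a constrained zonotope, and that $\calm^+_\text{p}$ is a convex polyhedron, i.e., a finite intersection of halfspaces. Applying \eqref{eq:intersection_CZ_H} once per halfspace — which, as recalled in Sec.~\ref{sub:prelim_set_operations}, extends the halfspace intersection to convex polyhedra — realizes $\calm^+=(\calc-c_S)\cap\calm^+_\text{p}$ in constrained zonotope form. The result remains bounded because $\calc-c_S$ is bounded, even though $\calp$ and $\calm^+_\text{p}$ may be unbounded; this boundedness is precisely why the intersection in Step~\ref{step:intersect} is needed rather than returning $\calm^+_\text{p}$ directly.

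The main obstacle I anticipate is the bookkeeping around the subtrahend center: establishing $c_S\in\cals$, and hence that intersecting with $\calc-c_S$ is harmless for the lower containment, is the one nontrivial ingredient, and it is where nonemptiness of $\cals$ is implicitly used. The remaining pieces — monotonicity of $\ominus$ in the minuend and closure of constrained zonotopes under intersection with a convex polyhedron — are immediate from the definitions \eqref{eq:pdiff} and the closed-form operations \eqref{eq:intersection_CZ_H} recalled earlier.
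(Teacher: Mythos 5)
Your proposal is correct and follows essentially the same route as the paper's proof: Step~\ref{step:M_poly} gives $\calm^+_\text{p}\supseteq\calc\ominus\cals$ via Prop.~\ref{prop:reverse} and \eqref{eq:pdiff_polytope}, $\calc-c_S$ is a second outer-approximation by symmetry, and Step~\ref{step:intersect} intersects the two using \eqref{eq:intersection_CZ_H}. The only difference is that you make explicit two facts the paper leaves implicit — monotonicity of $\ominus$ in the minuend and the containment $c_S\in\cals$ (via symmetry plus convexity) — which is a welcome tightening rather than a different argument.
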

\begin{proof}
    By Prop.~\ref{prop:reverse} and \eqref{eq:pdiff_polytope}, $\calm^+_\text{p}$ constructed in
    Step~\ref{step:M_poly} of Algo.~\ref{algo:outer_approx} outer-approximates $\calc\ominus\cals$. Since $\cals$ is symmetric about $c_S$, $(\calc -
    c_S)$ is also an outer-approximation of $\calc\ominus\cals$ by \eqref{eq:pdiff}. 
    We obtain the outer-approximating
    constrained zonotope $\calm^+$ by intersecting
    these outer-approximations in Step~\ref{step:intersect} using \eqref{eq:intersection_CZ_H}.
\end{proof}

All steps of Algo.~\ref{algo:outer_approx} are available in closed-form, when the
support function of $\cals$ is known in closed-form. Similarly to Algo.~\ref{algo:inner_approx}, Algo.~\ref{algo:outer_approx} is optimization-free, i.e., it does not require convex optimization solvers, 
when $\cals$ is an ellipsoid,  a zonotope, or a convex unions of symmetric 
intervals (see \eqref{eq:support_sets}). 

\subsection{Sufficient conditions for exactness}
\label{sub:exactness}

We show that the proposed approximations become exact when using an \textsc{Invertible} representation.

\begin{proposition}{\textsc{(Sufficient conditions for exactness of  polyhedral
    cover)}}\label{prop:exact_reverse}
    For an \textsc{Invertible} representation $\calc$, 
    Algo.~\ref{algo:outer_approx_hrep} computes a H-Rep polytope $\calp=\calc$ that is identical to the H-Rep polytope in Prop~\ref{prop:Indep}.5.
\end{proposition}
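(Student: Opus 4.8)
The plan is to exploit the fact that an \textsc{Invertible} representation makes $[G_C;A_C]$ a square nonsingular matrix. Indeed, Definition~\ref{defn:invertible} gives $n+\nconst{C}=\ndvec{C}$, so $[G_C;A_C]\in\bbr^{\ndvec{C}\times\ndvec{C}}$, and it has full (row) rank by the \textsc{MinRow} property. Consequently the right pseudoinverse used throughout Algo.~\ref{algo:outer_approx_hrep} collapses to an ordinary inverse, ${[G_C;A_C]}^\dagger={[G_C;A_C]}^{-1}$. The entire argument then reduces to showing that the data-dependent normalizing denominators in \eqref{eq:nu_i_defn} are all equal to $1$, after which the matrix $V$ simplifies to ${[G_C;A_C]}^{-1}$ and Steps~\ref{step:H_defn}--\ref{step:k_defn} reproduce the formulas of Prop.~\ref{prop:Indep}.5 verbatim.

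Concretely, I would first record the pseudoinverse identity: by the paper's definition $M^\dagger=M^\top{(MM^\top)}^{-1}$ for full-row-rank $M$, and when $M=[G_C;A_C]$ is square and invertible this equals $M^{-1}$. Substituting into the denominator of \eqref{eq:nu_i_defn}, we have ${[G_C;A_C]}^\dagger[G_C;A_C]={[G_C;A_C]}^{-1}[G_C;A_C]=I_{\ndvec{C}}$, so the argument of the norm is $e_i^\top I_{\ndvec{C}}=e_i^\top$, whose $\ell_1$-norm is exactly $1$. Hence each row is $v_i=e_i^\top{[G_C;A_C]}^{-1}$ with no rescaling, and stacking over $i\in\Nint{1}{\ndvec{C}}$ gives $V=I_{\ndvec{C}}{[G_C;A_C]}^{-1}={[G_C;A_C]}^{-1}$, so that $[V;-V]=[I_{\ndvec{C}};-I_{\ndvec{C}}]{[G_C;A_C]}^{-1}$.

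Finally I would plug this $V$ into the algorithm's formulas. Step~\ref{step:H_defn} yields $H=[I_{\ndvec{C}};-I_{\ndvec{C}}]{[G_C;A_C]}^{-1}[I_n;0_{\nconst{C}\times n}]$ and Step~\ref{step:k_defn} yields $k=1_{2\ndvec{C}\times 1}-[I_{\ndvec{C}};-I_{\ndvec{C}}]{[G_C;A_C]}^{-1}[-c_C;b_C]$, which are precisely $H_C$ and $k_C$ in \eqref{eq:hrep_from_cz}. Since Prop.~\ref{prop:Indep}.5 asserts that $\{x\mid H_Cx\leq k_C\}$ is exactly $\calc$ (and not merely an outer-approximation), the convex polyhedron $\calp$ returned by Algo.~\ref{algo:outer_approx_hrep} satisfies $\calp=\calc$, which establishes both claims of the proposition.

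I do not expect a genuine obstacle. The only point requiring care is confirming that the normalization in \eqref{eq:nu_i_defn} does not distort the resulting inequalities, and the computation above shows that in the \textsc{Invertible} case it evaluates to the constant $1$. Intuitively, that normalization is designed to tighten each coordinate $\dvec_i$ toward the box $\llbracket -1,1\rrbracket$; when $[G_C;A_C]$ is a bijection, the constraints ${\|\dvec\|}_\infty\leq 1$ transfer losslessly to $x$-space, so the generic outer-approximation of Prop.~\ref{prop:reverse} is already exact.
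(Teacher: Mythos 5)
Your proposal is correct and follows essentially the same route as the paper's proof: both hinge on the observation that for an \textsc{Invertible} representation the pseudoinverse ${[G_C;A_C]}^\dagger$ becomes ${[G_C;A_C]}^{-1}$, so each $v_i$ in \eqref{eq:nu_i_defn} reduces to $e_i^\top{[G_C;A_C]}^{-1}$, making the algorithm's output coincide with the exact H-Rep polytope \eqref{eq:hrep_from_cz} of Prop.~\ref{prop:Indep}.5. Your explicit check that the normalizing denominator equals $1$ is a detail the paper leaves implicit (it simply cites invertibility), but the argument is the same.
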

\begin{proposition}{\textsc{(Sufficient conditions for exactness of Pontryagin difference)}}\label{prop:exact_general}
    For an \textsc{Invertible} representation $\calc$, Algo.~\ref{algo:inner_approx} and~\ref{algo:outer_approx} provide approximations $\calm^-$ and $\calm^+$ that 
    satisfy
    $\calm^-=\calm=\calc\ominus\cals=\calm^+$.
\end{proposition}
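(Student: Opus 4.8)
The plan is to show that an \textsc{Invertible} representation eliminates the only two sources of conservativeness in the two algorithms — the projection inequality behind $\calm^-$ and the polyhedral cover behind $\calm^+$ — so that both approximations collapse onto $\calm=\calc\ominus\cals$. I would dispatch the outer-approximation first, as it is almost immediate. By Prop.~\ref{prop:exact_reverse}, for an \textsc{Invertible} representation Step~\ref{step:poly} of Algo.~\ref{algo:outer_approx} returns $\calp=\calc$ exactly. Hence Step~\ref{step:M_poly}, which applies the exact polyhedral formula \eqref{eq:pdiff_polytope}, yields $\calm^+_\text{p}=\calp\ominus\cals=\calc\ominus\cals=\calm$. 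It only remains to verify that the intersection in Step~\ref{step:intersect} does not shrink this set. Since $\cals$ is convex, compact, and symmetric about $c_S$, its center lies in $\cals$ (for any $s\in\cals$, symmetry gives $2c_S-s\in\cals$, and convexity gives the midpoint $c_S\in\cals$); consequently $x\in\calm$ implies $x+c_S\in\calc$, i.e.\ $\calm\subseteq\calc-c_S$. Therefore $(\calc-c_S)\cap\calm^+_\text{p}=\calm$, so $\calm^+=\calm$.

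For the inner-approximation, the key fact is that for an \textsc{Invertible} representation $[G_C;A_C]$ is square and invertible, so $\Gamma=[G_C;A_C]^{-1}[I_n;0_{\nconst{C}\times n}]$ satisfies $G_C\Gamma=I_n$ and $A_C\Gamma=0$. The only inequality in the proof of Thm.~\ref{thm:pdiff_general} is the containment \eqref{eq:subset_relation_general}, which comes from the projection bound $M(\cala\ominus\calb)\subseteq(M\cala)\ominus(M\calb)$ with $M=G_C$. I would upgrade it to an equality by \emph{lifting} through the invertible map $[G_C;A_C]$ instead of projecting through the wide matrix $G_C$. Since $\calb_\infty(A_C,b_C)$ lies in the affine set $\{\dvec\mid A_C\dvec=b_C\}$ while $\calv_0=\Gamma\cals_0$ lies in its direction space $\{\dvec\mid A_C\dvec=0\}$, one computes $[G_C;A_C]\calb_\infty(A_C,b_C)=(\calc-c_C)\times\{b_C\}$ and $[G_C;A_C]\calv_0=\cals_0\times\{0\}$ (using $G_C\Gamma=I_n$, $A_C\Gamma=0$). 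Because $[G_C;A_C]$ is invertible, hence of full column rank, \cite[Thm. 2.1.viii]{kolmanovsky1998theory} applies \emph{with equality}, and the Pontryagin difference factors across the product as $((\calc-c_C)\times\{b_C\})\ominus(\cals_0\times\{0\})=((\calc-c_C)\ominus\cals_0)\times\{b_C\}$. Projecting onto the first block then gives $G_C(\calb_\infty(A_C,b_C)\ominus\calv_0)=(\calc-c_C)\ominus\cals_0$ with equality. Following the remaining algebra of \eqref{eq:subset_relation_general} yields $\calm^-=c_{M^-}+G_C\calc_L=\calc\ominus\cals=\calm$ exactly; this identity holds whether or not the set is empty, so the sign test in Step~\ref{step:check} of Algo.~\ref{algo:inner_approx} stays consistent with $\calm$.

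The main obstacle is precisely this inner-approximation step: turning the generic containment $M(\cala\ominus\calb)\subseteq(M\cala)\ominus(M\calb)$ into an equality. Although $G_C\in\bbr^{n\times\ndvec{C}}$ is a wide matrix and is therefore not injective on all of $\bbr^{\ndvec{C}}$, its restriction to the affine constraint set $\{A_C\dvec=b_C\}$ (of dimension $n$, since $A_C$ has full row rank $\nconst{C}=\ndvec{C}-n$) and to the parallel direction space \emph{is} a bijection — which is exactly what \textsc{Invertibility} of $[G_C;A_C]$ guarantees, and what the product-space lifting above makes rigorous. The outer-approximation half is comparatively routine once Prop.~\ref{prop:exact_reverse} and the inclusion $\calm\subseteq\calc-c_S$ are in hand.
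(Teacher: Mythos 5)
Your proof is correct. The outer-approximation half follows the paper's own argument (Prop.~\ref{prop:exact_reverse} makes Step~\ref{step:poly} exact, \eqref{eq:pdiff_polytope} makes Step~\ref{step:M_poly} exact), and you are in fact more careful than the paper at the final step: the paper's proof dismisses Step~\ref{step:intersect} with ``the rest of the steps are exact,'' leaning on the remark inside the proof of Prop.~\ref{prop:outer} that $\calc - c_S \supseteq \calm$, whereas you spell out why $c_S\in\cals$ (symmetry plus convexity) and hence why the intersection cannot cut into $\calm^+_\text{p}=\calm$.

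The inner-approximation half is where you genuinely diverge from the paper. The paper re-derives $\calm=\calc\ominus\cals$ from scratch: it starts from the exact H-Rep of $\calc$ in \eqref{eq:C_exact_i} (a byproduct of Prop.~\ref{prop:exact_reverse}), encodes the quantifier $\forall s\in\cals_0$ through the support function, and then re-parametrizes points of $\calm$ via $\dvec = \Gamma x + [G_C;A_C]^{-1}[-c_{M^-};b_C]$ — legitimate only because $[G_C;A_C]$ is invertible — to recognize the set $G_C\calc_L + c_{M^-}=\calm^-$ from \eqref{eq:general_interim_8}. You instead leave the proof of Thm.~\ref{thm:pdiff_general} untouched and isolate its single inequality, the projection bound $M(\cala\ominus\calb)\subseteq(M\cala)\ominus(M\calb)$ behind \eqref{eq:subset_relation_general}, then upgrade it to an equality by lifting through the square invertible matrix $[G_C;A_C]$: the images $(\calc-c_C)\times\{b_C\}$ and $\cals_0\times\{0\}$, the factorization of $\ominus$ across the product (the second block is a singleton, so this is immediate from \eqref{eq:pdiff}), and the full-column-rank equality case of \cite[Thm. 2.1.viii]{kolmanovsky1998theory} — the same fact the paper itself invokes at \eqref{eq:low_dim} — do all the work. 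What each route buys: yours is more modular and pinpoints exactly where conservativeness enters Thm.~\ref{thm:pdiff_general} (the non-injective projection $G_C$), so exactness reads off as ``the projection becomes a bijection between the fibers''; the paper's computation is heavier but simultaneously re-verifies the closed-form of $\calm^-$ and stays within the H-Rep/support-function toolkit already set up for Prop.~\ref{prop:exact_reverse}. Your closing remark that the identity holds even when $\calc_L=\emptyset$, so Step~\ref{step:check} of Algo.~\ref{algo:inner_approx} remains consistent, is a detail the paper leaves implicit.
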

See
Sec.~\ref{app:proofs_aux_prop_exact_reverse} and~\ref{app:proofs_aux_prop_exact_general} for the proofs.
The exactness results (Prop.~\ref{prop:exact_reverse} and~\ref{prop:exact_general}) may be attributed to Prop.~\ref{prop:Indep}.5.
Also, for $\calc$ in an \textsc{Invertible} representation,  Step~\ref{step:M_poly} of Algo.~\ref{algo:outer_approx} computes a H-Rep polytope $\calm^+_\text{p}=\calc\ominus\cals$ by Prop.~\ref{prop:exact_reverse}. 
Consequently, instead of Step~\ref{step:intersect}, we can use Algo.~\ref{algo:invertible} to compute $\calm^+$ directly from $\calm_p^+$.

Algo.~\ref{algo:inner_approx},~\ref{algo:outer_approx} address
Prob.~\ref{prob_st:pdiff} by Thm.~\ref{thm:pdiff_general} and Prop.~\ref{prop:pdiff_general_l2},~\ref{prop:outer},~\ref{prop:exact_general}. Algo.~\ref{algo:outer_approx_hrep} addresses
Prob.~\ref{prob_st:outer_approx}  by 
Prop.~\ref{prop:reverse},~\ref{prop:exact_reverse}.

\subsection{Implementation considerations}
\label{sub:implementation}

The use of pseudoinverse $[G_C;A_C]^\dagger$ in Algo.~\ref{algo:inner_approx},~\ref{algo:outer_approx_hrep},
and~\ref{algo:outer_approx} was motivated by providing closed-form expressions for $D_{ii}$
\eqref{eq:diag_matrix_general} and $v_i$ \eqref{eq:nu_i_defn}. However, the computation of $[G_C;A_C]^\dagger$ can be computationally expensive for large $\ndvec{C}$ and $\nconst{C}$.
In practice, it suffices to compute a minimum norm solution of systems of linear equations --- a solution $\Gamma$ in \eqref{eq:Gamma_linear_equations} for Step~\ref{step:Gamma} of Algo.~\ref{algo:inner_approx}, a solution $\dvec$ to $A_C\dvec=b_C$ in Step~\ref{step:check} of Algo.~\ref{algo:inner_approx}, and
a solution $V^\top$ to $[G_C;A_C]^\top V^\top=I_{\ndvec{C}}$ for
Step~\ref{step:nu_i_defn} of Algo.~\ref{algo:outer_approx_hrep}
(where
$V[G_C;A_C]$ is later normalized row-wise in
$\ell_1$-norm).

We can use QR
factorization or complete orthogonal decomposition to compute a minimum norm solution without explicitly computing the psuedoinverse~\cite[Ch.
12]{boyd_introduction_2018}. 
Existing algorithms can also exploit sparsity~\cite[Ch. 12.3]{boyd_introduction_2018}. In Sec.~\ref{sec:num} and~\ref{sec:num_app}, our 
\texttt{MATLAB} implementation of Algo.~\ref{algo:inner_approx},~\ref{algo:outer_approx_hrep},
and~\ref{algo:outer_approx} utilizes \texttt{lsqminnorm} to compute minimum norm solutions and uses sparse matrices for computational efficiency.

\section{Inner-approximation of robust controllable sets}
\label{sec:inner_approx_RC}

We now address Prob.~\ref{prob_st:RC_set} by inner-approximating the $T$-step RC set using
Algo.~\ref{algo:inner_approx} and the set
recursion \eqref{eq:set_recursion}. We consider both cases described in Prob.~\ref{prob_st:RC_set}, characterize the representation
complexity of the computed inner-approximations, and show that their representation complexities grow linearly with $T$.

Throughout this section, we will assume that the input set $\calu_t$ and the goal set $\calg$ are polytopes, hence, representable as constrained zonotopes, and the additive disturbance set $\calw_t$ is a convex and compact set that is
symmetric about any $c_W\in\bbr^p$.
Also, we assume that the sets $\calk_t$ computed in \eqref{eq:set_recursion} are full-dimensional for every $t$.

\subsection{Convex polyhedral $\calx_t$ and invertible $A_t$}
\label{sub:set_recursion_inv_A}

\begin{table}
    \centering
    \begin{tabular}{|c|c|c|c|}
        \hline
      Set & Eq. no.  & $\nconst{}$ & $\dofo{}$  \\\hline\hline
      $\calk_{t,\text{inner}}^\text{interim,1}$ & \eqref{eq:set_recursion_inv_A_pdiff}    & $\nconst{\calk_{t+1}}$ & $\dofo{\calk_{t+1}}$\\\hline
      $\calk_t^\text{interim,2}$  & \eqref{eq:set_recursion_inv_A_msum}   & \multirow{2}*{$\nconst{\calk_{t+1}}+\nconst{B\calu}$} & \multirow{3}*{$\dofo{\calk_{t+1}}+\dofo{B\calu}$} \\\cline{1-2}
      $\calk_t^\text{interim,3}$  & \eqref{eq:set_recursion_inv_A_invAmultiply} &  & \\\cline{1-3}
      $\calk_t$  & \eqref{eq:set_recursion_inv_A_intersect}   & $\nconst{\calk_{t+1}}+\nconst{B\calu}+\nineq{X}$ & \\\hline
    \end{tabular}
    \caption{Representation complexity (see Defn.~\ref{defn:repr}) for various sets involved in computing an inner-approximation to the $T$-step RC set using \eqref{eq:set_recursion_inv_A}, where
        $\scrc(\calk_{t+1})=(\nconst{\calk_{t+1}},\dofo{\calk_{t+1}})$ and $\scrc(B\calu)=(\nconst{B\calu},\dofo{B\calu})$, and
        $\calx$ is characterized by $\nineq{X}$ hyperplanes. Observe that the
    representation complexity grows by $(\nconst{BU}+\nineq{X}, \dofo{B\calu})$ with each step of the recursion.
    }
    \label{tab:rc_inv_A}
\end{table}

Given a finite horizon $T\in\bbn$, we consider the case where  $A_t$ in \eqref{eq:ltv_dyn} are invertible, and 
state constraints $\calx_t$ are polyhedra for all $t\in\Nint{0}{T-1}$. Since $\calx_t$ can be unbounded, they may not be representable as constrained zonotopes. However, we can still inner-approximate the RC sets as constrained zonotopes using \eqref{eq:intersection_CZ_H}.

For all $t\in\Nint{0}{T-1}$, we break down the set recursion \eqref{eq:set_recursion} to compute the $T$-step RC set into four steps:
\begin{subequations}
\begin{align}
    \calk_{t,\text{inner}}^\text{interim,1}&\subseteq\calk_t^\text{interim,1} = \calk_{t+1} \ominus F_t\calw_t,\label{eq:set_recursion_inv_A_pdiff}\\
    \calk_t^\text{interim,2} &= \calk_{t,\text{inner}}^\text{interim,1} \oplus (-B_t\calu_t),\label{eq:set_recursion_inv_A_msum}\\
    \calk_t^\text{interim,3} &= A_t^{-1}\calk_t^\text{interim,2},\label{eq:set_recursion_inv_A_invAmultiply}\\
    \calk_t &= \calk_t^\text{interim,3} \cap \calx_t\label{eq:set_recursion_inv_A_intersect}.
\end{align}\label{eq:set_recursion_inv_A}%
\end{subequations}
The recursion \eqref{eq:set_recursion_inv_A} is initialized with a constrained zonotope $\calk_T=\calg$. We use Prop.~\ref{prop:pdiff_general_l2} to compute a constrained zonotope $\calk_{t,\text{inner}}^\text{interim,1}$ in \eqref{eq:set_recursion_inv_A_pdiff}. Then, we compute
\eqref{eq:set_recursion_inv_A_msum}--\eqref{eq:set_recursion_inv_A_intersect} exactly using
\eqref{eq:set_operations_CZ}. Thus, for all $t\in\Nint{0}{T-1}$, the sets
$\calk_t,\calk_{t,\text{inner}}^\text{interim,1},\calk_t^\text{interim,2}$, and $\calk_t^\text{interim,3}$ are constrained zonotopes, and $\calk^-=\calk_0$ is an inner-approximation of the $T$-step RC set.

Table~\ref{tab:rc_inv_A} describes the representation complexity of various constrained zonotopes involved at each
step of \eqref{eq:set_recursion_inv_A}. For ease of discussion, we assume that $\calx_t,\calu_t$, and $\calw_t$ are time-invariant, i.e., $\calx_t=\calx$, $\calu_t=\calu$, and $\calw_t=\calw$, for all $t\in\Nint{0}{T}$. 
Additionally, we assume that $\calx$ is characterized by $\nineq{X}$ hyperplanes.  
Given representation complexities $\scrc(\calk_{t+1})=(\nconst{\calk_{t+1}},\dofo{\calk_{t+1}})$ and $\scrc(B\calu)=(\nconst{B\calu},\dofo{B\calu})$, the rows of Table~\ref{tab:rc_inv_A} follow from Prop.~\ref{prop:pdiff_general_l2}, \eqref{eq:msum_CZ}, 
\eqref{eq:affinemap_CZ}, and \eqref{eq:intersection_CZ_H}, respectively. 
With $\scrc(\calg)=(\nconst{\calg},\dofo{\calg})$, the representation complexity of the inner-approximation of the $T$-step RC set is 
\begin{align}
    \scrc(\calk^-)=(\nconst{\calg}+T(\nconst{B\calu}+\nineq{X}), \dofo{\calg} + T\dofo{B\calu}). \label{eq:set_complexity_inv_A}
\end{align}
Observe that the representation complexity of $\calk^-$ does not depend on the disturbance set $\calw$ due to
Prop.~\ref{prop:pdiff_general_l2}, and grows linearly with $T$.

\subsection{Polytopic $\calx_t$}
\label{sub:set_recursion_no_inv_A}

We now consider the case where the state constraints $\calx_t$ are polytopes for all $t\in\bbn$, and admit a constrained zonotope representation with $\scrc(\calx_t)=(L_{\calx_t}, 1)$ when using Algo.~\ref{algo:invertible}. Unlike Sec.~\ref{sub:set_recursion_inv_A}, we no longer assume that $A_t$
is invertible. 

Similarly to \eqref{eq:set_recursion_inv_A}, we separate the set recursion \eqref{eq:set_recursion} into \emph{three} steps
performed for all $t\in\Nint{0}{T-1}$:
\begin{subequations}
\begin{align}
    \calk_{t,\text{inner}}^\text{interim,1}&\subseteq\calk_t^\text{interim,1} = \calk_{t+1} \ominus F_t\calw_t,\label{eq:set_recursion_no_inv_A_pdiff}\\
    \calk_t^\text{interim,2} &= \calk_{t,\text{inner}}^\text{interim,1} \oplus (-B_t\calu_t),\label{eq:set_recursion_no_inv_A_msum}\\
    \calk_t &= \calx_t\cap_{A_t}\calk_t^\text{interim,2} \label{eq:set_recursion_no_inv_A_intersect},
\end{align}\label{eq:set_recursion_no_inv_A}%
\end{subequations}
where \eqref{eq:set_recursion_no_inv_A_intersect} combines \eqref{eq:set_recursion_inv_A_invAmultiply} and \eqref{eq:set_recursion_inv_A_intersect} into a single step using \eqref{eq:intersection_CZ}.
Assuming time-invariance, $\scrc(\calx)=(L_X,1)$ and the representation complexity of $\calk_t$ grows by $(\nconst{BU}+L_X+n, \dofo{B\calu})$ at each step of the set recursion (see \eqref{eq:intersection_CZ} and Table~\ref{tab:rc_inv_A}). 
Consequently, with $\scrc(\calg)=(\nconst{\calg},\dofo{\calg})$, the representation complexity of the inner-approximation of the $T$-step RC set is 
\begingroup
    \makeatletter\def\f@size{9.5}\check@mathfonts
\begin{align}
    \scrc(\calk^-)=(\nconst{\calg}&+T(\nconst{B\calu}+ L_X + n), \dofo{\calg} + T\dofo{B\calu}). \label{eq:set_complexity_no_inv_A}
\end{align}
\endgroup
Similarly to \eqref{eq:set_complexity_inv_A}, $\scrc(\calk^-)$ in \eqref{eq:set_complexity_no_inv_A} also grows linearly with $T$ and does not depend on the disturbance set
$\calw$.

\begin{remark}\label{rem:outer}
    We can also use the set recursions discussed in Sec.~\ref{sub:set_recursion_inv_A}
    and~\ref{sub:set_recursion_no_inv_A} in conjunction with Algo.~\ref{algo:outer_approx} to obtain an
    outer-approximation to the $T$-step RC set.
\end{remark}
\begin{remark}\label{rem:reduce}
In this work, we did not use the exact or approximate reduction
techniques~\cite{raghuraman2022set,scott_constrained_2016,kopetzki2017methods}
that may lower the representation complexity at the
expense of additional computation or accuracy or both. On the other hand, it is straightforward to apply these results to the sets computed in this work for a further reduction in the set representation complexity. 
\end{remark}

\section{Case studies}
\label{sec:num}

We now demonstrate the computational efficiency, scalability, and utility of our approach in several case studies.
First, we consider two case studies involving low-dimensional systems to illustrate the advantage of the proposed approach in
computing time when compared to existing inner-approximating approaches based on constrained
zonotopes~\cite{yang_efficient_2022}, and exact approaches based on H-Rep and V-Rep polytopes~\cite{MPT3}. 
Then, we discuss the scalability of the
approach by computing the RC set for a chain of mass-spring-damper  system, and empirically demonstrate that the proposed approach
is numerically stable and can compute the RC sets for high-dimensional systems and long horizons.

We perform the presented computations in a standard computer with Intel CPU i9-12900KF processor (3.2 GHz, 16 cores) and 64 GB RAM, running MATLAB 2022b on Windows. We use YALMIP~\cite{YALMIP}, \text{MOSEK}~\cite{MOSEK}, and \text{GUROBI}~\cite{GUROBI} to set up and solve the optimization problems. For two-dimensional plots of constrained zonotopes $\calc$, we compute the appropriate H-Rep/V-Rep polytope approximations via support function and vector computations \eqref{eq:support} in $100$ equi-spaced directions in $\bbr^2$. We estimate the volume of the sets via grid-based sampling.

\subsection{Double integrator example with polytopic $\calx$}
\label{sub:ex1}

We consider the computation of RC set for a double integrator system with polytopic state constraints $\calx$ and an ellipsoidal disturbance set $\calw$. The linear time-invariant system matrices are
\begin{align}
    A &= \left[\begin{array}{cc}
         1 & \Delta T  \\
         0 & 1
    \end{array}\right],\quad B = \left[\begin{array}{c}
        {(\Delta T)}^2/2 \\
         \Delta T
    \end{array}\right],\text{ and }F=I_2, \nonumber
\end{align}
with the sampling period $\Delta T = 0.1$, the input set $\calu =\llbracket -2, 2\rrbracket$ that is an interval, the disturbance set $\calw=(0.1 I_2, [0;0])$ or $\calw=(\text{diag}([0.2, 0.04]), [0.1;0.1])\}$ that is 
a circle or an ellipsoid respectively, and
the state constraints $\calx=\calg=\llbracket -2, 2\rrbracket\times \llbracket -3, 3\rrbracket$ that are time-invariant, axis-aligned rectangles.

We generate inner-approximations of the $T$-step RC set using the recursion in Sec.~\ref{sub:set_recursion_no_inv_A}
for $T=20$ with an exact ellipsoidal representation of $\calw$ and a zonotopic outer-approximation $\calw^+=(G_W,
c_W)\supset \calw$ where the Pontryagin difference is inner-approximated using Algo.~\ref{algo:inner_approx}.
We also compare the computed sets with the exact sets computed using MPT3 where the vertex-facet enumeration was
accomplished using Fourier-Motzkin elimination~\cite{MPT3}, and the
inner-approximations of the RC sets using the two-stage approach with the zonotopic
$\calw^+$~\cite{yang_efficient_2022}. We also compute an outer-approximation of the RC sets using Algo.~\ref{algo:outer_approx}, as discussed in Rem.~\ref{rem:outer}.

Table~\ref{tab:comparison_di} shows that the proposed inner-approximation approach with ellipsoidal $\calw$ is about
two orders of magnitude faster than the exact approach~\cite{MPT3} and the $\calw^+$-based approximation via two-stage approach~\cite{yang_efficient_2022}, while providing reasonably accurate inner-approximations (about $97\%$ and $77\%$ of the area of the exact RC set in the first and second case  respectively). 
Our approach with zonotopic $\calw^+$ is slightly faster than with ellipsoidal $\calw$, but
it is more conservative due to the zonotopic outer-approximation of the disturbance set. 
The shorter computation times of the proposed approach are a direct result of Thm.~\ref{thm:pdiff_general} and Prop.~\ref{prop:pdiff_general_l2}, since the implementation of the set recursion in Sec.~\ref{sub:set_recursion_no_inv_A} can be accomplished in closed-form, i.e., optimization-free. 
As expected, $(\nconst{\calk_0},\dofo{\calk_0})$ of the inner-approximations do not depend on choice of $\calw$ (see \eqref{eq:set_complexity_no_inv_A}).
The proposed outer-approximation is slower than the inner-approximation, primarily due to the use of linear programming to produce a minimal representation of the convex polyhedron in Step~\ref{step:poly} of Algo.~\ref{algo:outer_approx} to manage the representation complexity. 
Additionally, $\nconst{\calk_0}$ for outer-approximation is higher than the inner-approximation due to Step~\ref{step:poly} of Algo.~\ref{algo:outer_approx}.
For most safe constrained control problems, an inner-approximation of the $T$-step RC set is sufficient.

Fig.~\ref{fig:double_integrator} shows that the RC sets and their corresponding inner-approximations, associated with
a ball-shaped $\calw$ (left) and an ellipsoidal $\calw$ (right). As expected, the inner-approximations of the RC set
constructed using zonotopic $\calw^+$ are more conservative than their ellipsoid-based counterparts in both cases.
On the other hand, the proposed inner-approximations with zonotopic $\calw^+$ are identical (left) or similar (right) to
the inner-approximations produced by the existing two-stage approach~\cite{yang_efficient_2022}, while requiring
significantly shorter computation time (see Table~\ref{tab:comparison_di}).

\begin{table}
    \centering
    \begin{tabular}{|c||r|r|r|r|r||r|r|r|r|r|}\hline
    \multirow{3}{*}{Method} & Area & \multicolumn{2}{c|}{Compute time} & \multicolumn{2}{c||}{Complexity $\scrc$} & Area & \multicolumn{2}{c|}{Compute time} & \multicolumn{2}{c|}{Complexity $\scrc$} \\\cline{2-11}
    & Ratio & Time (s) & Ratio & $\nconst{\calk_0}$ & $\dofo{\calk_0}$ & Ratio & Time (s) & Ratio & $\nconst{\calk_0}$ & $\dofo{\calk_0}$ \\\cline{2-11}
    & \multicolumn{5}{c||}{$\calw$ is a ball (left in Fig.~\ref{fig:double_integrator})} 
    & \multicolumn{5}{|c|}{$\calw$ is an ellipsoid (right in Fig.~\ref{fig:double_integrator})} \\\hline
    Exact~\cite{MPT3} & 1 & 2.044 & 177.39 & \multicolumn{2}{c||}{N/A} 
    & 1 & 2.958 & 176.65 & \multicolumn{2}{c|}{N/A}\\\cline{5-6}\cline{10-11}
    Ours $\calm^-$ & \textbf{0.97} & 0.012 & 1 & 120 & 11
    & \textbf{0.77} & 0.017 & 1 & 120 & 11\\
    Ours $\calm^+$ & 1.67 & 4.064 & 352.66 & 322 & 11
    & 3.46 & 4.199 & 250.78 & 326 & 11\\\cline{2-11}
    & \multicolumn{10}{c|}{With a zonotope outer-approximation $\calw^+\supset\calw$ as the disturbance set}\\\cline{2-11}
    Ours $\calm^-$ & 0.70 & \textbf{0.010} & \textbf{0.86} & 120 & 11
    & 0.37 & \textbf{0.014} & \textbf{0.84} & 120 & 11\\
    2-stage~\cite{yang_efficient_2022} & 0.67 & 1.594 & 138.29 & 120 & 11
    & 0.22 & 1.520 & 90.78 & 120 & 11\\\hline
    \end{tabular}
    \caption{Comparison of various approaches for Sec.~\ref{sub:ex1}. 
    Our (inner-approximation) approach is about two orders of magnitude faster than existing approaches~\cite{MPT3,yang_efficient_2022}, and generates sufficiently accurate approximations. 
    }
    \label{tab:comparison_di}
\end{table}

\begin{figure}
    \centering
    \includegraphics[width=0.35\linewidth,trim={450 100 0 0}, clip]{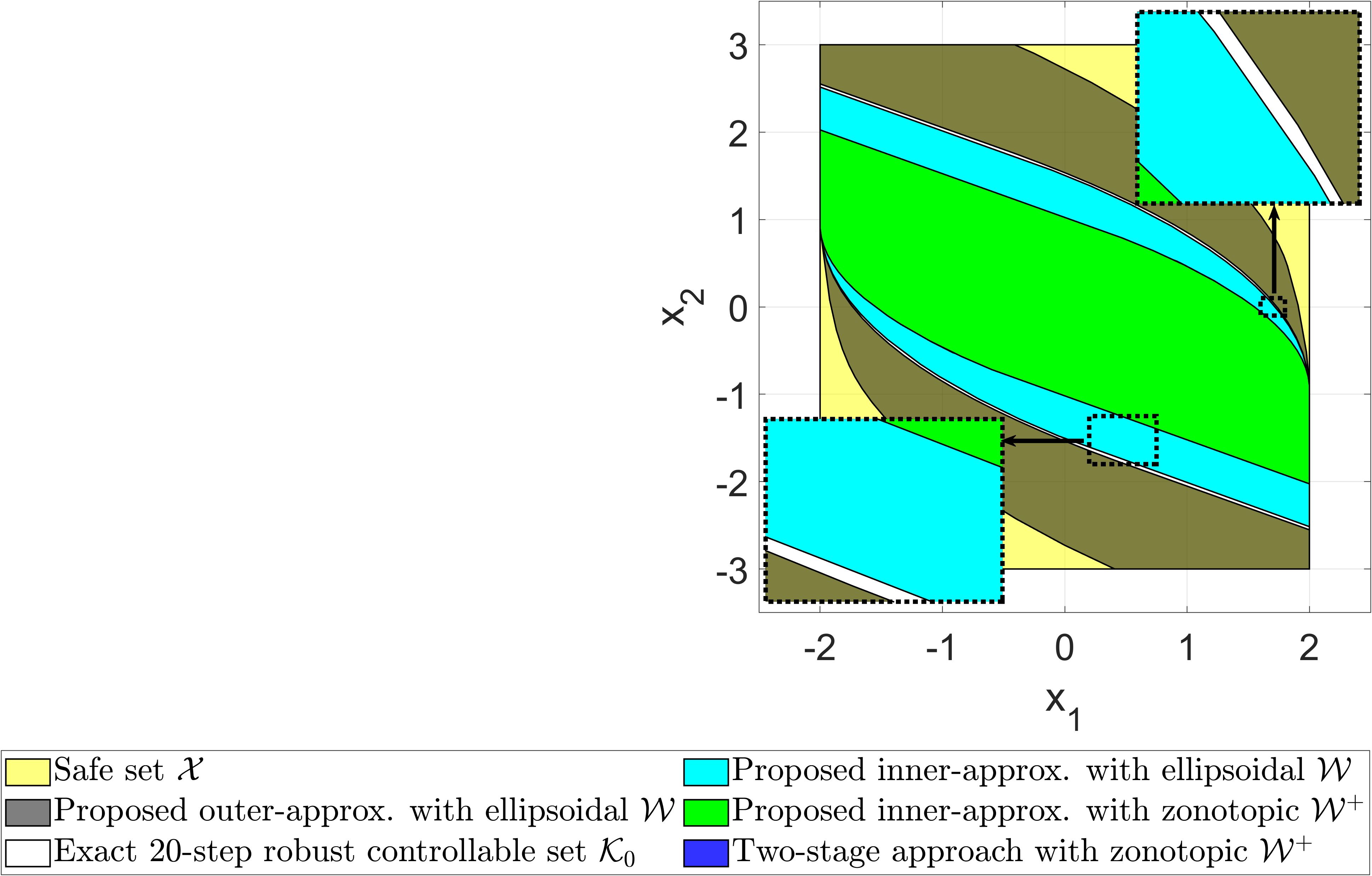}\ 
    \includegraphics[width=0.35\linewidth,trim={450 100 0 0}, clip]{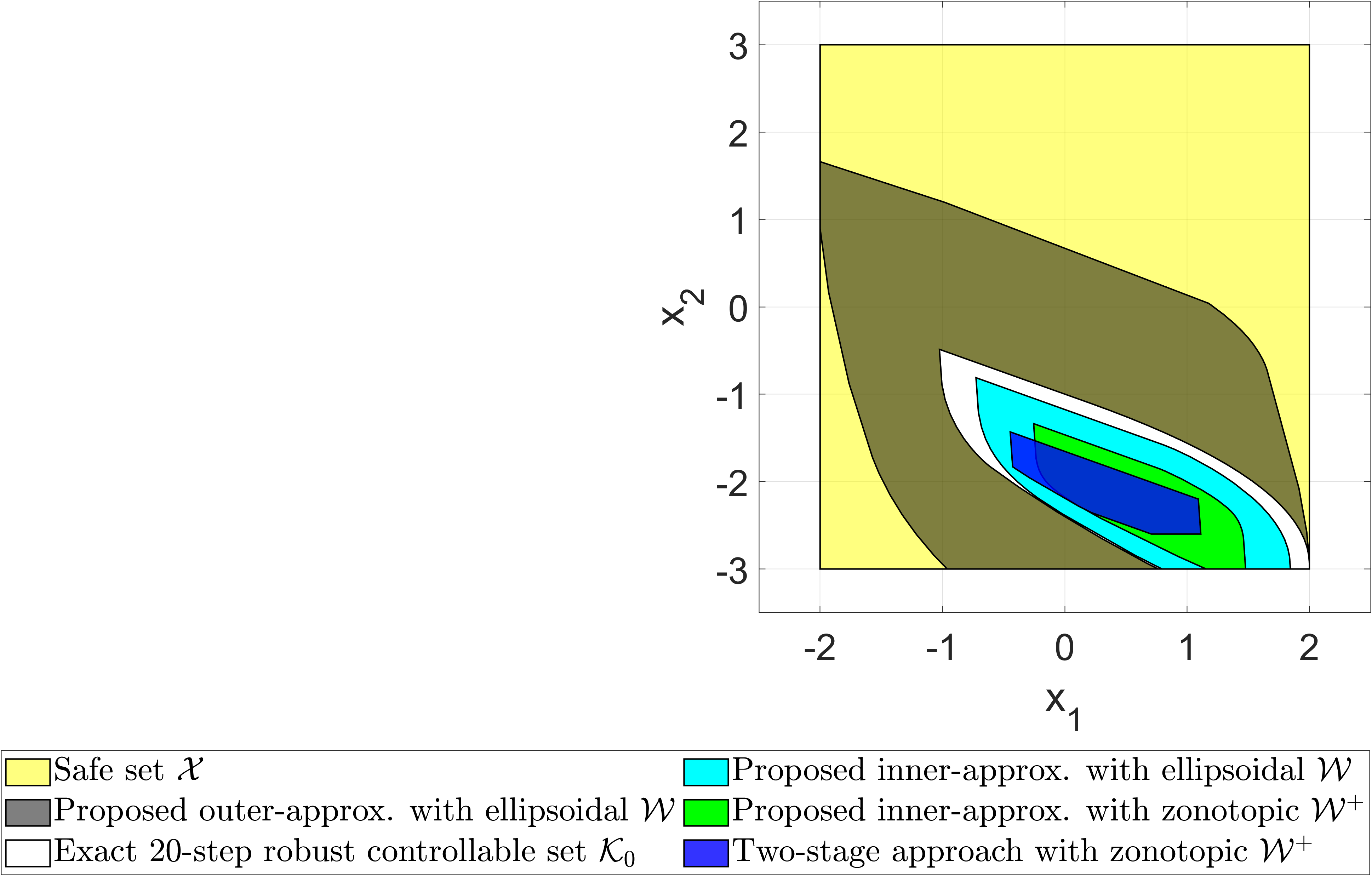}\\
    \includegraphics[width=0.8\linewidth,trim={0 0 0 500}, clip]{figures/ex1/sets_shifted.png}
    \caption{Robust controllable sets  computed using the recursion in
Sec.~\ref{sub:set_recursion_no_inv_A} for Sec.~\ref{sub:ex1} 
    with a ball-shaped $\calw$
    (left) and an ellipsoidal $\calw$ (right).
    We compare the sets obtained with the proposed approximations to the sets from existing approaches (exact~\cite{MPT3} and the two-stage approach~\cite{yang_efficient_2022}). 
    The insets in the left figure show that the proposed approach with ellipsoidal $\calw$ (cyan) provides sufficiently accurate inner-approximations of the exact RC set (white).}
    \label{fig:double_integrator}
\end{figure}

\subsection{An example with convex polyhedral $\calx$}
\label{sub:ex2}

We now consider the computation of RC set over a long horizon
$T=100$, similar to~\cite[Ex. 2]{yang_efficient_2022}. Consider a stable, discrete-time, linear time-invariant system with matrices
\begin{align}
    A &= \left[\begin{array}{cc}
         0.99 & 0.02  \\
         -0.15 & 0.99
    \end{array}\right],\quad B = \left[\begin{array}{c}
        -0.01 \\
         0.08
    \end{array}\right],\text{ and }F=I_2, \nonumber
\end{align}
and an interval input set $\calu =\llbracket -1.5, 1.5\rrbracket$, zonotopic disturbance set $\calw=(0.01 I_2, [0;0])$, 
zonotopic goal set $\calg=(0.5 I_2, [1.5;0])$, and convex polyhedral, time-invariant state constraints $\calx=\{x \mid [-1, 0; 2,    1] x \leq  [2;5]\}$.

Since the state constraints are polyhedral and $A$ is invertible, we generate inner-approximations of the
$T$-step RC set using the recursion in Sec.~\ref{sub:set_recursion_inv_A}. Similarly to
Sec.~\ref{sub:ex1}, we compare the obtained sets with their exact counterparts computed using MPT3~\cite{MPT3} and the
inner-approximations obtained using the two-stage approach~\cite{yang_efficient_2022}. We also compute an outer-approximation of the RC sets using
Algo.~\ref{algo:outer_approx}.

Table~\ref{tab:comparison_yang} shows that the proposed inner-approximating approach
with zonotopic $\calw$ is over two to three orders of magnitude faster than the existing approaches~\cite{MPT3,yang_efficient_2022}, while providing
reasonably accurate inner-approximations that cover about $89\%$ of the area of the exact RC set, as in Sec.~\ref{sub:ex1}. The shorter computation time compared to existing approaches is attributed to the optimization-free implementation of
Sec.~\ref{sub:set_recursion_inv_A}, by  Prop.~\ref{prop:pdiff_general_l2} and
Corr.~\ref{corr:specific}.
As expected, $(\nconst{\calk_0},\dofo{\calk_0})$ for our $\calm^-$ and the two-stage approach are identical~\cite{yang_efficient_2022}.

\begin{table}
    \centering
    \begin{tabular}{|c||r|r|r|r|r|}\hline
    \multirow{2}{*}{Method} & Area & \multicolumn{2}{c|}{Compute time} & \multicolumn{2}{c|}{Complexity $\scrc$} \\\cline{2-6}
    & Ratio & Time (s) & Ratio & $\nconst{\calk_0}$ & $\dofo{\calk_0}$ \\\hline
    Exact~\cite{MPT3} & 1 & 110.661 & 1235.87 & \multicolumn{2}{c|}{N/A}\\\cline{5-6}
    Ours $\calm^-$ & 0.89 & \textbf{0.090} & \textbf{1} & 200 & 202.0\\
    Ours $\calm^+$ & 1.36 & 158.753 & 1772.96 & 1703 & 953.5\\
    2-stage~\cite{yang_efficient_2022} & \textbf{0.92} & 10.192 & 113.82 & 200 & 202.0\\\hline
    \end{tabular}
    \caption{Comparison of various approaches for Sec.~\ref{sub:ex2}. 
    Our (inner-approximation) approach is about two to three orders of magnitude faster than existing approaches~\cite{MPT3,yang_efficient_2022}, and generates accurate approximations  even for a long horizon $T$.}
    \label{tab:comparison_yang}
\end{table}
\begin{figure}[t]
    \centering\includegraphics[width=0.8\linewidth, trim={100 80 70 45}, clip]{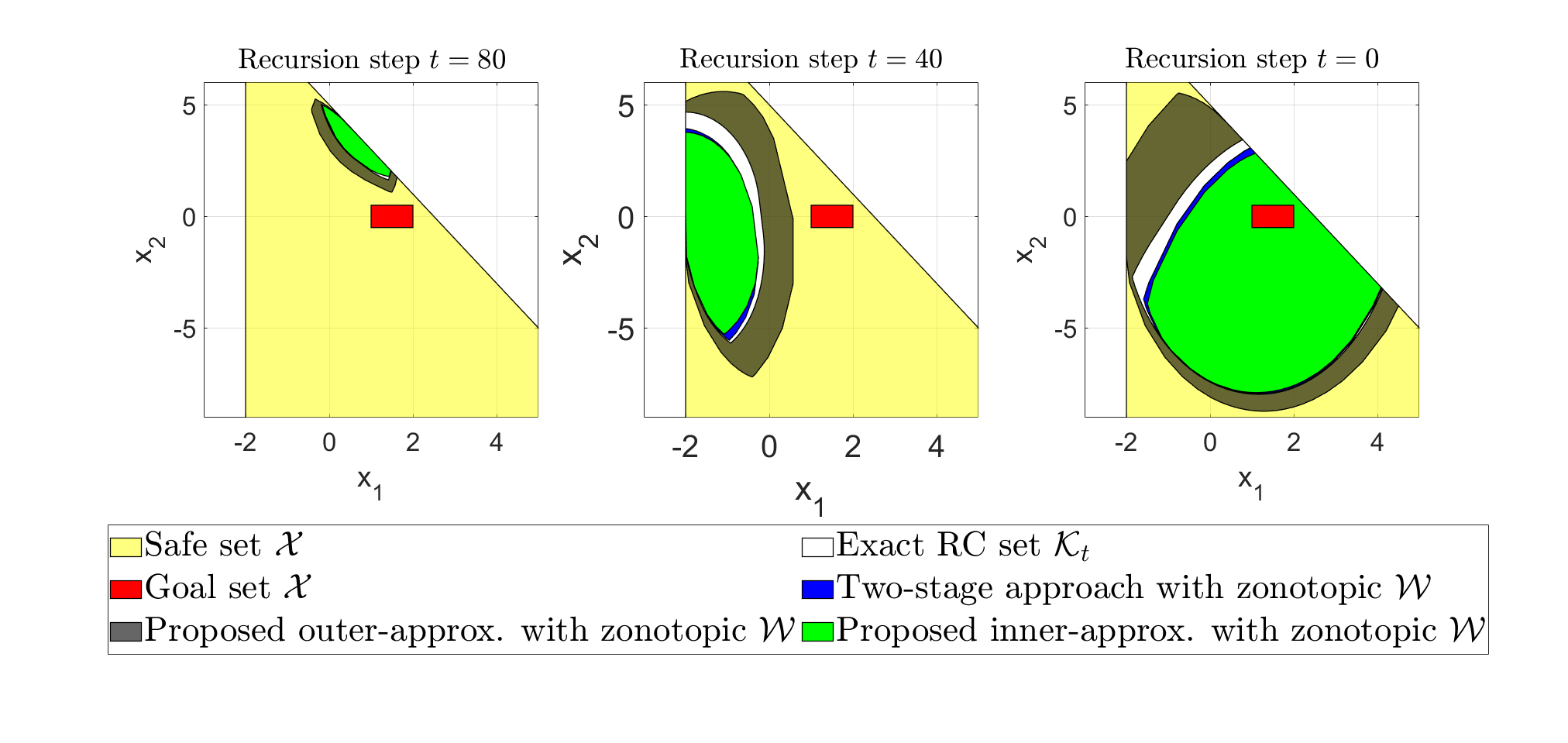}
    \caption{Snapshots of the $100$-step robust controllable sets computed using Sec.~\ref{sub:set_recursion_inv_A} for Sec.~\ref{sub:ex2} at recursion steps $t\in\{0, 40, 80\}$. 
 Our approach computes inner-approximations of the
RC set that are similar to those obtained using the exact approach~\cite{MPT3} and the two-stage  approach~\cite{yang_efficient_2022}, with significantly lower computational effort (see Table~\ref{tab:comparison_yang}).}
    \label{fig:comparison_yang}
\end{figure}
Fig.~\ref{fig:comparison_yang} shows the $100$-step RC sets computed by various methods at recursion step $t\in\{0, 40,
80\}$. Unlike in Sec.~\ref{sub:ex1}, the proposed inner-approximation of the RC set in this case is contained in the
inner-approximation using the two-stage approach~\cite{yang_efficient_2022}. However, the conservativeness
of the proposed approach compared to the two-stage approach~\cite{yang_efficient_2022} appears minimal, covering about $89\%$ vs $92\%$ of the area of the exact RC set.

\subsection{Scalability: Chain of damped spring-mass systems}
\label{sub:ex3}

We now demonstrate scalability of the representation complexity for the proposed inner-approximation. Specifically, we compute the RC set
of a chain of $\Nmass\in\bbn$ homogenous mass-spring-damper systems, see Fig.~\ref{fig:spring_mass}, for a range of
chain lengths, i.e., different system dimension, and set  recursion lengths, i.e., different horizon $T$. The chain system has the following continuous-time linear time-invariant dynamics,
\begingroup
    \makeatletter\def\f@size{8.5}\check@mathfonts
    \begin{subequations}
\begin{align}
    \ddot{x}_1 &= -\frac{2k}{m}x_1 + \frac{k}{m} x_2 - \frac{\mu}{m} \dot{x}_1 + w_1 + u_1,\\
    \ddot{x}_\Nmass &= -\frac{2k}{m}x_\Nmass + \frac{k}{m} x_{(\Nmass-1)} - \frac{\mu}{m} \dot{x}_\Nmass \nonumber\\
                    &\qquad\qquad\hspace*{0.8em} + w_\Nmass + u_\Nmass,\\
    \ddot{x}_j &= -\frac{2k}{m}x_j + \frac{k}{m} (x_{j-1} + x_{j+1}) - \frac{\mu}{m} \dot{x}_j + w_j + u_j,
\end{align}\label{eq:chain_spring_mass}%
\end{subequations}%
\endgroup
where $j\in\Nint{2}{(\Nmass-1)}$. 
Here, \eqref{eq:chain_spring_mass} describes a $(2\Nmass)$-dimensional system parameterized by the spring constant $k$, the mass $m$, and the friction coefficient $\mu$. 
Each spring is actuated by an acceleration input $u_i\in\calu\subset\bbr$
subject to an acceleration  disturbance  
$w\in\calw\subset\bbr$. 
\begin{figure}[t]
    \centering\includegraphics[width=0.8\linewidth]{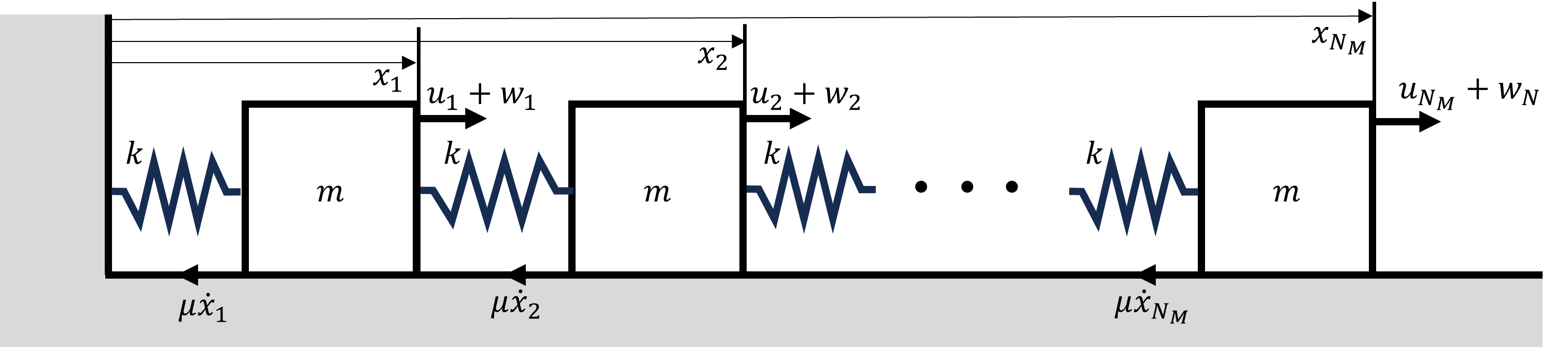}
    \caption{Chain of $\Nmass$ mass-spring-damper systems.}
    \label{fig:spring_mass}
\end{figure}

After discretizing \eqref{eq:chain_spring_mass} with sampling time $\Delta T=0.1$ using zero-order hold, we consider the
following computations of $T$-step RC sets:\\
1) $n\in\Nint{4}{100}$ with $T=20$ using Sec.~\ref{sub:set_recursion_no_inv_A},\\
2) $n\in\Nint{4}{100}$ with $T=40$ using Sec.~\ref{sub:set_recursion_no_inv_A},\\
3) $n\in\Nint{4}{50}$ with $T=20$ using two-stage approach~\cite{yang_efficient_2022},\\
4) $n\in\Nint{4}{14}$ with $T=20$ using exact approach~\cite{MPT3}.\\
We use parameters $k=0.1$, $m=0.1$, and $\mu=0.01$, input set $\calu={\llbracket-0.1,0.1\rrbracket}^{\Nmass}$,
disturbance set 
$\calw={\llbracket-0.0001,0.0001\rrbracket}^{\Nmass}$, and
state constraints
$\calx=\calg={\left({\llbracket-0.2,0.2\rrbracket\times\llbracket-0.5,0.5\rrbracket}\right)}^{\Nmass}$.

\begin{figure}
    \centering\includegraphics[width=0.8\linewidth,trim={0 0 0 0}, clip]{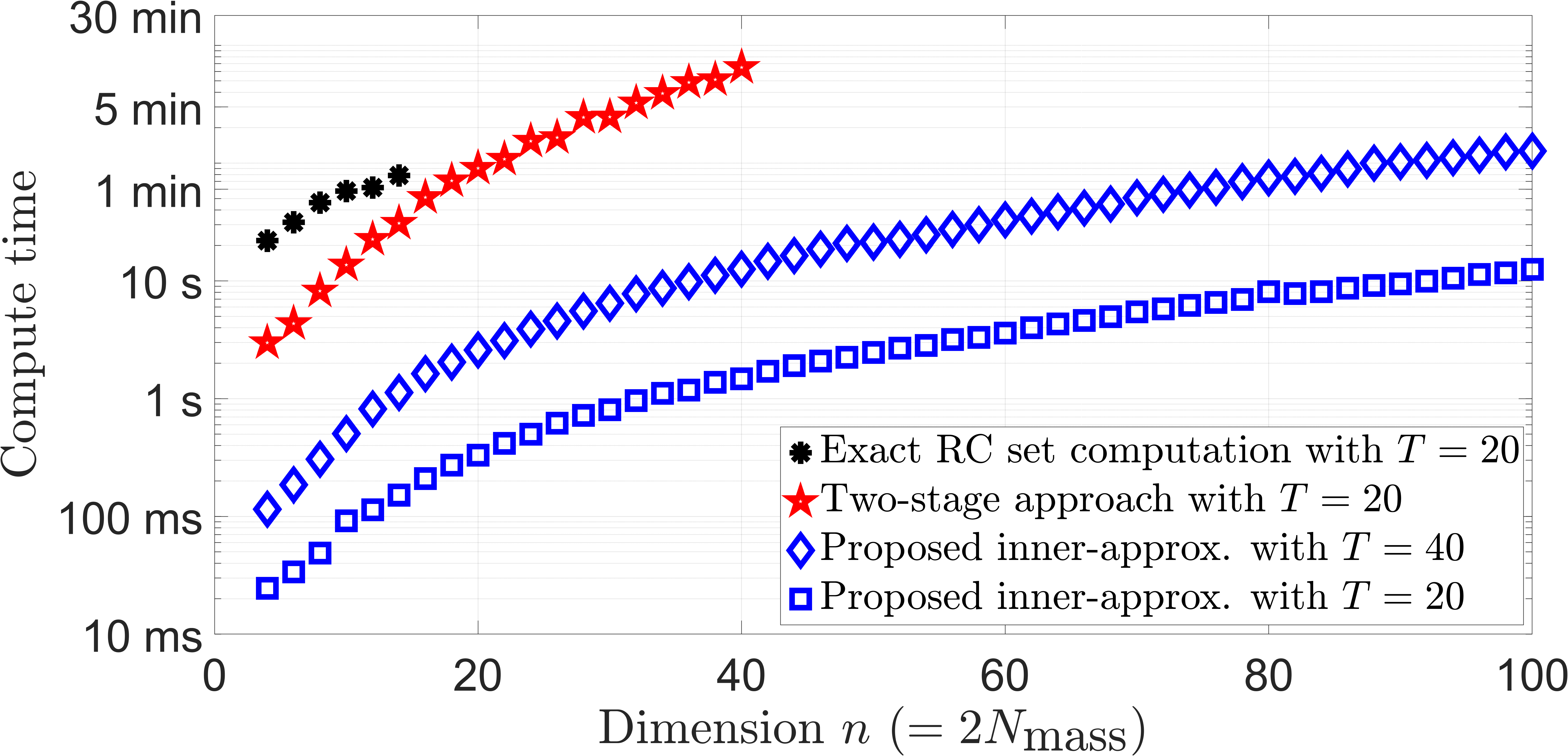}
    \caption{Time taken by various methods 
    to compute the RC sets 
    for varying system dimension $n$.
        The proposed method takes $12.52$ seconds to inner-approximate $20$-step RC sets for a $100$-dimensional system. In
        contrast, existing methods (the exact computation using MPT3~\cite{MPT3} and the two-stage approach in~\cite{yang_efficient_2022}) require longer computation time to tackle lower
        dimensional systems. We also report the time taken by the proposed method to inner-approximate $40$-step RC sets.}
    \label{fig:scalability}
\end{figure}

Fig.~\ref{fig:scalability} shows that the proposed method takes significantly shorter computation time to produce an
inner-approximation to the $20$-step RC sets, when compared to existing methods~\cite{yang_efficient_2022,MPT3}.
Specifically, we observe that our approach takes $12.52$ seconds to inner-approximate the $20$-step RC set for a
$100$-dimensional system ($\Nmass=50$). On the other hand, the two-stage approach~\cite{yang_efficient_2022} took $649.74$ seconds (about $10$ minutes) to compute an
inner-approximation for the $20$-step RC set for much smaller dimensional system $n=40$ ($\Nmass=20$). We encountered numerical issues for the
exact set computation using MPT3~\cite{MPT3} beyond $n=14$ ($\Nmass=7$). As expected, the proposed approach took longer
to compute the RC sets for an horizon $T=40$ compared to the sets for an horizon $T=20$, but still computed an inner-approximation to the $40$-step RC set for the $100$ dimensional system in $126.53$ seconds (about $2$ minutes). The scalability 
of the proposed approach compared to existing approaches may be attributed to the optimization-free implementation of
Sec.~\ref{sub:set_recursion_no_inv_A}, made possible by  Prop.~\ref{prop:pdiff_general_l2} and
Corr.~\ref{corr:specific}.

We oberved a moderate growth in the representation complexity of the proposed inner-approximations. 
For a $10$-dimensional system, an inner-approximating constrained zonotope
$\calk_0$ for the $20$-step RC set had a representation complexity of $\scrc(\calk_0)=(620, 11)$ with $n=10, \ndvec{\calk_0}=730, \nconst{\calk_0}=620$. As noted
in Rem.~\ref{rem:reduce}, various reduction techniques may be used to further lower the set
representation complexities, if so desired.

\section{Application: Abort-safe rendezvous} 
\label{sec:num_app}

Abort safety in spacecraft rendezvous~\cite{daniel,marsillach2021fail,vinod2021abort} requires that a spacecraft in nominal operation approaching a target must retain the ability to 
avoid collision with the target in the event of an anomaly or a failure. In~\cite{vinod2021abort}, we showed that the problem of abort-safe spacecraft rendezvous could be encoded using RC sets, and we computed these sets using H-Rep/V-Rep polytopes. However, such an approach is challenging in high-dimensions and suffers from numerical issues, which motivates the computation of the RC sets using constrained zonotopes. Additionally, to guarantee safety, we require an exact computation or inner-approximation of the RC sets.

In this work, we consider rendezvous to a future Lunar gateway flying in a near-rectilinear halo orbit (NRHO) around the Moon~\cite{gateway}. To minimize the use of fuel, we allow the spacecraft to utilize all 3 degrees of freedom in its approach. We compute six-dimensional RC sets that constrain the rendezvous trajectory in order to guarantee that, in the event of a failure, off-nominal operation of the spacecraft allows for a safe abort maneuver. Additionally, the trajectory must lie in a line-of-sight cone that arises from sensing and communication requirements, and ensures that the Sun stays behind the spacecraft to help in perception of the Lunar gateway. 

\emph{Nominal dynamics:} We obtain the unactuated  nonlinear dynamics of the spacecraft in the vicinity of the Lunar gateway by considering Earth and Moon's gravitational forces and dominant perturbations~\cite{muralidharan2020control}. We linearize the dynamics around the NRHO of the gateway, and discretize the dynamics in 
$T_\text{sample}$-long time intervals to obtain the relative dynamics~\cite{marsillach2021fail},
\begin{align}
x_{t+1} &= A_t x_t + B_t u_t, \label{eq:nom_ltv}
\end{align}
with state (position and velocity) $x_t\in\bbr^6$,  input $u_t\in\calu\subset\bbr^3$ models impulsive changes in velocities, 
and perfect state measurements. 

\emph{Off-nominal dynamics:} We consider three modifications to the nominal dynamics in the event of failure --- 1) limited actuation to model the event where the main thrusters fail and the spacecraft is forced to use redundant thrusters like attitude thrusters, 2) process noise to model the resulting actuation uncertainty, and 3) measurement noise to model sensing uncertainty that may increase with the use of redundant thrusters.  The need for redundant thrusters is well-known in space applications to ensure safety in off-nominal scenarios~\cite{fehse2003automated}. %
We assume that the process and measurement noises are drawn from pre-determined bounded sets that may be characterized via offline statistical analysis~\cite{fehse2003automated}.

Specifically, the off-nominal dynamics after 
a failure event at time $t$ are,
\begin{subequations}
\begin{align}
        z_{k+1|t} &= A_t z_{k|t} + B_t (u_{k|t} + w_{k|t}),\\
    \hat{z}_{k|t} &= z_{k|t} + \eta_{k|t},\label{eq:offnom_estimates}
\end{align}\label{eq:offnom_ltv}%
\end{subequations}%
where $z_{k|t}$ is the state after failure at time $k\geq t$ initialized by $z_{t|t}=x_t$ (the nominal state at failure time $t$), and $w_{k|t}\in\Woffnom\subset\bbr^3$ and
$\eta_{k|t}\in\Eoffnom\subset\bbr^6$ are bounded disturbances to the input and post-failure state respectively. 
The disturbances model the actuation mismatch and sensing limitations that can become prominent after failure. 
The post-failure input $u_t\in\Uoffnom\subset\calu$ where $\Uoffnom$ models the limited actuation available after failure. 
We consider a feedback controller 
$\pi:\bbr^6\to\Uoffnom$ that provides a post-failure control $u$ in \eqref{eq:offnom_ltv} given the current state estimate $\hat{z}\in\bbr^6$. Let $\Pi$ be the set of all such controllers.

From \eqref{eq:offnom_ltv}, the state estimate $\hat{z}_t$ follows the  dynamics,
\begin{align}
    \hat{z}_{t+1} &= A_t \hat{z}_t + B_t u_t + \phi_t,\label{eq:offnom_ltv_est}
\end{align}
with disturbance $\phi_t\in\Phi_t=\Eoffnom\oplus(B_t\Woffnom)\oplus (-A_t\Eoffnom)$.
From \eqref{eq:offnom_estimates},  $z_t$ and $\hat{z}_t$ satisfy 
\begin{align}
    z_t\in\hat{z}_t+(-\Eoffnom)\text{ and }
    \hat{z}_t\in z_t+\Eoffnom\label{eq:z_t_hat_z_t}.
\end{align}

\emph{Rendezvous constraints:} We consider the problem of navigating the spacecraft to a target set $\calt\subset\bbr^3$ in front of the Lunar
gateway, while staying inside a line-of-sight cone 
$\call\subset\bbr^3$ originating from the Lunar gateway.
The designed nominal rendezvous trajectory must also stay outside a keep-out set
$\cald\subset\bbr^3$ around the Lunar gateway during the rendezvous maneuver. 
Also, for some pre-determined post-failure safety horizon $\Tsafe\in\bbn$, the nominal state $x_t$ at any time $t$ must satisfy the abort-safety requirement:
\begingroup
    \makeatletter\def\f@size{8.5}\check@mathfonts
\begin{align}
    \text{(Abort-safety):}&\quad
    \left\{\begin{array}{l}
    \forall k\in\Nint{t}{t+\Tsafe},\ \exists \pi_k\in\Pi,\\
    \forall w_{k|t}\in\Woffnom,\ \forall \eta_{k|t}\in\Eoffnom,\\
    z_{k|t}\text{ in \eqref{eq:offnom_ltv} with } u_{k|t}=\pi_k(\hat{z}_{k|t})\\
    \text{satisfies }z_{k|t}\not\in\cald\text{, given $z_{t|t}=x_t$}.
\end{array}\right.\label{eq:abort_safety}
\end{align}
\endgroup
Informally, \eqref{eq:abort_safety} requires the  
nominal trajectory to permit steering the spacecraft to continue staying outside $\cald$ using limited actuation
and imperfect state information under
perturbed dynamics \eqref{eq:offnom_ltv} over a safety horizon of length $\Tsafe$, in the event of a failure at time
$t$. 

\emph{Optimal control problem:} Given initial state $x_0$, the optimal control problem is formulated as,
\begin{align}
    \begin{array}{rl}
    \text{min} &\quad\sum_{t} \left(\text{dist}(x_t, \calt)^2 + \lambda {\|u_t\|}_2\right)\\
    \text{s. t.} &\quad\text{Nominal dynamics \eqref{eq:nom_ltv} from  $x_0$},\\
    \forall t, &\quad x_t\in\call,\text{ and }x_t\not\in\cald\\
    \forall t, &\quad \text{$x_t$ meets abort-safety requirement \eqref{eq:abort_safety}}.\\
\end{array}\label{eq:prob_sketch}
\end{align}
For $\lambda>0$, \eqref{eq:prob_sketch} balances the typical goals of rendezvous
--- approaching the target
set $\calt$ while limiting the energy spent. We measure the energy spent as  
${(\Delta v)}_t= {\|u_t\|}_2$.

\emph{Enforcement of (Abort-safety) using RC sets:} Similarly to~\cite{vinod2021abort}, we encode the abort-safety requirements using appropriately defined RC sets. 
Let $\cals^\complement=\bbr^3\setminus\cals$ be the complement of a set
$\cals\subseteq\bbr^3$.
\begin{proposition}{\textsc{(Sufficient condition for Abort-safety)}}\label{prop:suff_abort_safety}
    Consider a H-Rep polytope keep-out set $\cald=\cap_{i=1}^{\nineq{D}} \calh_i$ with $\nineq{D}$ halfspaces
    $\calh_i\subset\bbr^3$, and $\Eoffnom$ that is symmetric about the origin. For any time $s\in\bbn$, let
    $\calk(s, \Tsafe, \calh_i^\complement\ominus\Eoffnom)$ denote the $\Tsafe$-step RC set for dynamics \eqref{eq:offnom_ltv_est}
characterized by ${\{(A_t,B_t,\Phi_t)\}}_{t=s}^{s+\Tsafe}$, $\Uoffnom$, and  
    $\calx_t=\calg=\calh_i^\complement\ominus\Eoffnom$. Then, $x_s$ satisfies
    \eqref{eq:abort_safety} if $x_s\in\cup_{i=1}^{\nineq{D}}\left({\calk(s, \Tsafe,
\calh_i^\complement\ominus\Eoffnom)\ominus\Eoffnom}\right)$.
\end{proposition}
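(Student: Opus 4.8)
The plan is to exploit the single-halfspace structure of the keep-out set and reduce abort-safety of the true state $z_{k|s}$ to a robust reachability statement about the estimate $\hat z_{k|s}$, for which the RC set already supplies a feedback certificate. Since $\cald=\cap_{i=1}^{\nineq{D}}\calh_i$, we have $z\notin\cald$ if and only if $z\in\cup_{i=1}^{\nineq{D}}\calh_i^\complement$, so it suffices to keep $z_{k|s}$ inside a \emph{single} $\calh_i^\complement$ for the entire horizon $\Nint{s}{s+\Tsafe}$ — a deliberately conservative choice that avoids switching halfspaces. The hypothesis $x_s\in\cup_i(\calk(s,\Tsafe,\calh_i^\complement\ominus\Eoffnom)\ominus\Eoffnom)$ furnishes an index $i$ with $x_s\in\calk(s,\Tsafe,\calh_i^\complement\ominus\Eoffnom)\ominus\Eoffnom$, and I would commit to this $\calh_i^\complement$ throughout.

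First I would place the initial estimate inside the RC set. Since $z_{s|s}=x_s$ and $\hat z_{s|s}=x_s+\eta_{s|s}$ with $\eta_{s|s}\in\Eoffnom$, applying the definition of the Pontryagin difference \eqref{eq:pdiff} to $x_s\in\calk(s,\Tsafe,\calh_i^\complement\ominus\Eoffnom)\ominus\Eoffnom$ with the perturbation $\eta_{s|s}$ yields $\hat z_{s|s}\in\calk(s,\Tsafe,\calh_i^\complement\ominus\Eoffnom)$. I would then invoke the feedback reading of Defn.~\ref{defn:RC_set}, realized by the recursion \eqref{eq:set_recursion}: this membership provides controllers $\pi_k$ with $u_{k|s}=\pi_k(\hat z_{k|s})\in\Uoffnom$ such that $\hat z_{k|s}\in\calh_i^\complement\ominus\Eoffnom$ for every $k\in\Nint{s}{s+\Tsafe}$ and every disturbance sequence $\phi_{k|s}\in\Phi_k$. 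The soundness check here is that driving the true dynamics \eqref{eq:offnom_ltv} with this control produces an estimate trajectory that is a legitimate trajectory of \eqref{eq:offnom_ltv_est}: the lumped disturbance $\phi_{k|s}=\eta_{k+1|s}+B_k w_{k|s}-A_k\eta_{k|s}$ lies in $\Phi_k=\Eoffnom\oplus(B_k\Woffnom)\oplus(-A_k\Eoffnom)$ for every $w_{k|s}\in\Woffnom$ and $\eta_{k|s}\in\Eoffnom$. Because the RC set is robust to all independent sequences drawn from $\Phi_k$ — a superset of the correlated realizations generated by the true noises — its guarantee transfers to the actual closed loop.

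Finally I would translate the estimate guarantee into a statement about the true state. For each $k$, the second relation in \eqref{eq:z_t_hat_z_t} gives $z_{k|s}=\hat z_{k|s}-\eta_{k|s}$ with $\eta_{k|s}\in\Eoffnom$; symmetry of $\Eoffnom$ about the origin yields $-\eta_{k|s}\in\Eoffnom$, so applying \eqref{eq:pdiff} to $\hat z_{k|s}\in\calh_i^\complement\ominus\Eoffnom$ with perturbation $-\eta_{k|s}$ gives $z_{k|s}=\hat z_{k|s}+(-\eta_{k|s})\in\calh_i^\complement$, hence $z_{k|s}\notin\cald$. Since this holds for all $k\in\Nint{s}{s+\Tsafe}$ and all admissible $w_{k|s},\eta_{k|s}$, it is exactly the abort-safety requirement \eqref{eq:abort_safety}.

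The main obstacle I anticipate is bookkeeping the two distinct roles of $\Eoffnom$ while keeping the $z$- and $\hat z$-dynamics straight: the outer $\ominus\Eoffnom$ absorbs the initial estimation error so that $\hat z_{s|s}$ lands in the RC set, whereas the inner tightening $\calh_i^\complement\ominus\Eoffnom$ supplies the per-step margin that converts ``estimate in $\calh_i^\complement\ominus\Eoffnom$'' into ``true state in $\calh_i^\complement$.'' Matching the feedback quantifier alternation of Defn.~\ref{defn:RC_set} to the $\forall k\,\exists\pi_k\,\forall(w,\eta)$ structure of \eqref{eq:abort_safety}, and verifying that the over-approximation by $\Phi_k$ preserves soundness rather than merely conservativeness, is where care is required; the remaining arguments are a direct chain of Pontryagin-difference inclusions.
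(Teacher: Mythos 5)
Your proof is correct and follows essentially the same route as the paper's: absorb the initial estimation error via the outer $\ominus\Eoffnom$ to place $\hat{z}_{s|s}$ in the RC set, invoke Defn.~\ref{defn:RC_set} to keep $\hat{z}_{k|s}\in\calh_i^\complement\ominus\Eoffnom$ for a single committed halfspace index $i$, then use symmetry of $\Eoffnom$ and the Pontryagin difference to conclude $z_{k|s}\in\calh_i^\complement\subseteq\cald^\complement$. The only differences are cosmetic: you apply the definition \eqref{eq:pdiff} directly where the paper cites $(\calu\ominus\calv)\oplus\calv\subseteq\calu$ from Kolmanovsky--Gilbert, and you additionally make explicit the lumped-disturbance derivation $\phi_{k|s}=\eta_{k+1|s}+B_kw_{k|s}-A_k\eta_{k|s}\in\Phi_k$ and the correlated-versus-independent disturbance soundness point, which the paper leaves implicit in \eqref{eq:offnom_ltv_est}.
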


See Sec.~\ref{app:proofs_aux_prop_suff_abort_safety} for the proof of Prop.~\ref{prop:suff_abort_safety} using \eqref{eq:offnom_ltv_est} and \eqref{eq:z_t_hat_z_t}. 

We solve \eqref{eq:prob_sketch} using a receding horizon framework.
For a finite planning horizon $\Tplan\in\bbn$,  the following (non-convex)
optimization problem approximates \eqref{eq:prob_sketch},
\begingroup
    \makeatletter\def\f@size{9}\check@mathfonts
\begin{align}
    \begin{array}{cl}
        \underset{\substack{x_{(t+1)|t},\ldots,x_{(t+\Tplan)|t}\\
        u_{t|t},\ldots,u_{(\Tplan-1)|t}}}{\text{minimize}} &\ \sum_{t} \left(\text{dist}(x_t, \calt)^2 + \lambda {\|u_t\|}_2\right)\\
        \text{subject\ to} &\ \text{Dyn. \eqref{eq:nom_ltv} defines $x_{k|t}$ given $x_t$},\\
    \forall k\in\Nint{t+1}{t+\Tplan}, &\ x_{k|t}\in\call,\ x_{k|t}\in\bigcup\limits_{i=1}^{\nineq{D}}\calh_i^\complement,\ u_{k-1|t}\in\calu\\
    \forall k\in\Nint{t+1}{t+\Tplan}, &\  x_{k|t}\in\bigcup\limits_{i=1}^{\nineq{D}} \calk_i'(k,
    k+\Tsafe),\\
\end{array}\label{eq:prob_sketch_2}
\end{align}
\endgroup
with $\calk_i'(k, k+\Tsafe)\triangleq\calk(k, k+\Tsafe, \calh_i^\complement\ominus\Eoffnom)\ominus\Eoffnom$.
The non-convexity in \eqref{eq:prob_sketch_2} arises from the \emph{disjunctive
constraints}~\cite{disjunctive}.

For a sampling time $T_\text{sample}=20$ minutes, we solve \eqref{eq:prob_sketch} with a planning horizon of $2$ hours ($\Tplan=6$) and an abort-safety time horizon of $6$ hours ($\Tsafe=18$). 
We consider the nominal control set $\calu=\llbracket-1/3,1/3\rrbracket$ (in m/s), off-nominal input set $\Uoffnom=0.1\calu$, post-failure process noise in the ellipsoid $\Woffnom=(1/60 I_3, 0_{3\times 1})$ (in m/s), and post-failure measurement noise in the ellipsoid  $\Eoffnom=(\operatorname{diag}(1, 1, 1, 1/60, 1/60, 1/60), 0_{6\times 1})$ (in m and m/s). We define the origin-centered keep-out set $\cald={\llbracket-100,100\rrbracket}^3$ (in m) which contains the Lunar gateway~\cite{gateway}. We also define a proper cone characterized by four rays originating from the origin as the line-of-sight cone $\call=\{x\in\bbr^3 \mid [0, 1, -1;0, 1, 1;-1, 1,
0;1, 1, 0]x \leq 0_{4\times 1}\}$. 
We define an ellipsoidal target set
$\calt=(0.05 I_3, c_\text{target})$ with $c_\text{target}=[0.2416;
-0.4017; -0.1738]$ and initial state $x_0=[ 1.4498; 
   -2.4105;
   -1.0429;
    0.01;
    0.01;
    0.01]$ such that, from the origin, the target is $0.5$ km away and the initial state is $3$ km away with non-zero initial velocity.
    We rotate
    $\cald$ and $\call$ to have the $+\mathrm{y}$-face of $\cald$ and the axis of symmetry of $\call$ be aligned with the line segment joining $x_0$ and
    $c_\text{target}$ respectively. 
    The choice of parameters considers a rendezvous approach with the Sun behind the spacecraft, as the gateway flies near the apolune of the NRHO. 

    The exact computation of $\calk_i'(k,
    k+\Tsafe)$ based on polytopes~\cite{MPT3} is challenging,  due to the complexity of the calculations involved in the considered problem setting. Therefore, we use the proposed approach for inner-approximating $\Tsafe$-RC set using constrained zonotopes to enforce abort-safety constraint.
Specifically, we use Thm.~\ref{thm:pdiff_general} and Sec.~\ref{sub:set_recursion_no_inv_A} to compute constrained zonotopic, inner-approximations of
$\calk_i'(k,k+\Tsafe)$, and then use big-M formulations to cast the disjunctive constraints in \eqref{eq:prob_sketch_2} as
mixed-integer linear constraints. 

Consider $\nineq{D}$ constrained zonotopes $\{\calc_i\}_{i=1}^{\nineq{D}}$
    where $\calc_i\subseteq \calk_i'(k,k+\Tsafe)$ for each $i\in\Nint{1}{\nineq{D}}$ and some $k\in\Nint{t}{t+\Tplan}$.
Using $\nineq{D}$ auxiliary continuous variables
$\dvec_i\in\bbr^{\ndvec{C,i}}$ and binary variables
$\delta_i\in\{0,1\}$, and a sufficiently large $M>0$, the following set of $2(n+ \ndvec{C,i}) + \nconst{C,i} +1$ mixed-integer linear constraints is sufficient for $x_{k|t}\in\cup_{i=1}^{\nineq{D}}\calk_i'(k,k+\Tsafe)$ at any $k\in\Nint{t+1}{t+\Tplan}$, 
\begingroup
    \makeatletter\def\f@size{9}\check@mathfonts
\begin{align}
    \forall i\in\Nint{1}{\nineq{D}}, &\quad {\|G_{C,i}\dvec_i + c_{C,i} - x_{k|t}\|}_\infty\leq M(1 - \delta_i) \nonumber\\
    \forall i\in\Nint{1}{\nineq{D}}, &\quad A_{C,i}\dvec_i = b_{C,i},\  {\|\dvec_i\|}_\infty \leq 1,\nonumber \\
                               &\quad\sum\nolimits_{i=1}^{\nineq{D}} \delta_i \geq 1.\nonumber
\end{align}
\endgroup
Similar mixed-integer constraints based on big-M can be used to encode the disjunctive constraint $x_{k|t}\in\cup_{i=1}^{\nineq{D}}
\calh_i^\complement$~\cite{disjunctive}. Thus, \eqref{eq:prob_sketch_2} is a mixed-integer quadratic program, which can be solved via off-the-shelf solvers like \texttt{GUROBI}~\cite{GUROBI}.

\begin{figure}
        \centering
        \includegraphics[width=0.8\linewidth,trim={70 0 100 40},
    clip]{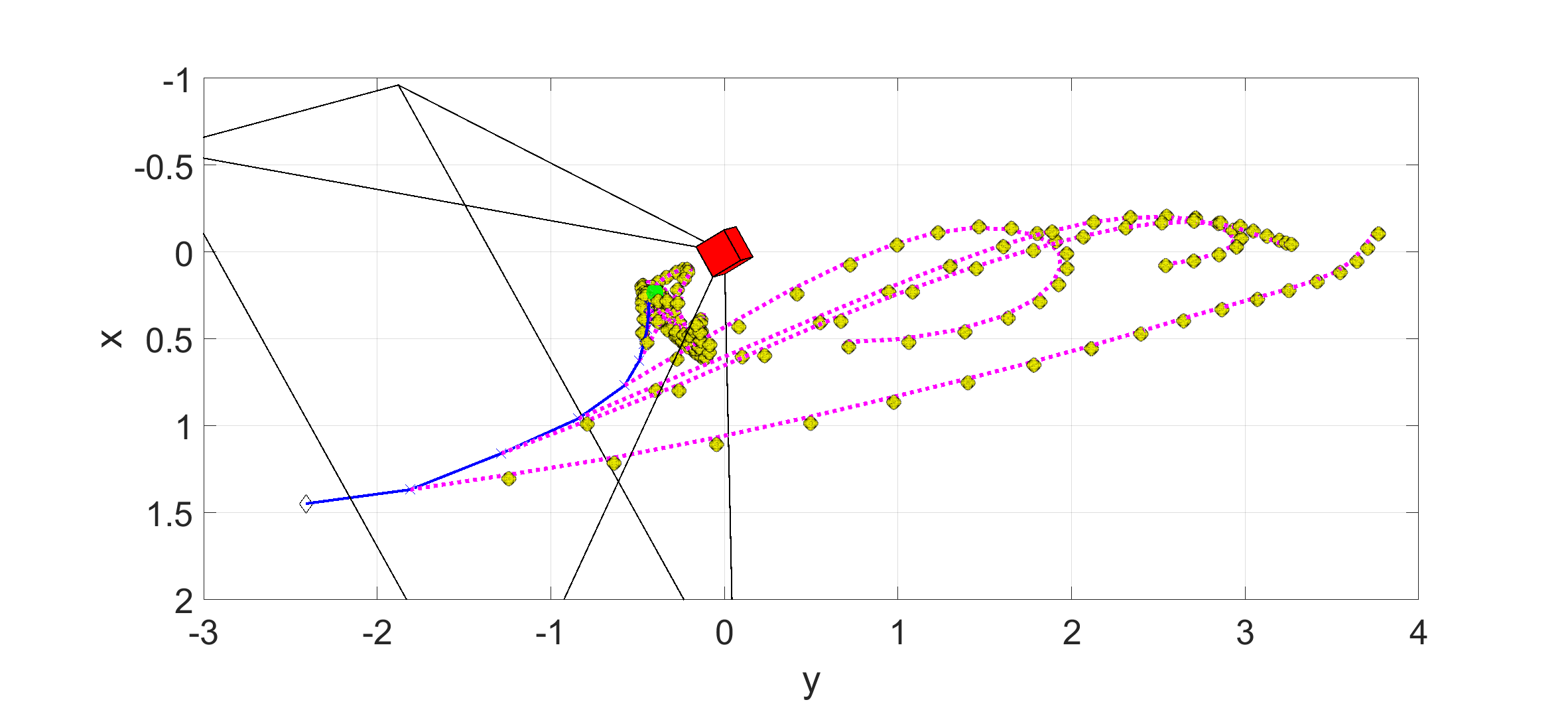}
    \caption{
    Designed nominal rendezvous trajectory along with the abort trajectories in case of failures at  $t\in\Nint{1}{\Tsim}$.
    The nominal trajectory starts at the initial state (diamond) and reaches the target set $\calt$ (green) in $\Tsim=10$ time steps, while staying within the line-of-sight cone $\call$ (black). The abort-safety requirement curves the nominal trajectory away from the keep-out set (red) at all times. The abort trajectories stay outside the
        keep-out set, despite the presence of disturbances which are adversarially chosen according to
        \eqref{eq:adv_dist}. 
        See \href{https://youtu.be/6BPmHgxD3OI}{https://youtu.be/6BPmHgxD3OI} for more details.
        }
    \label{fig:spacecraft_traj}
\end{figure}

\begin{figure}
    \centering
    \includegraphics[width=0.7\linewidth,trim={60 0 90 0},
    clip]{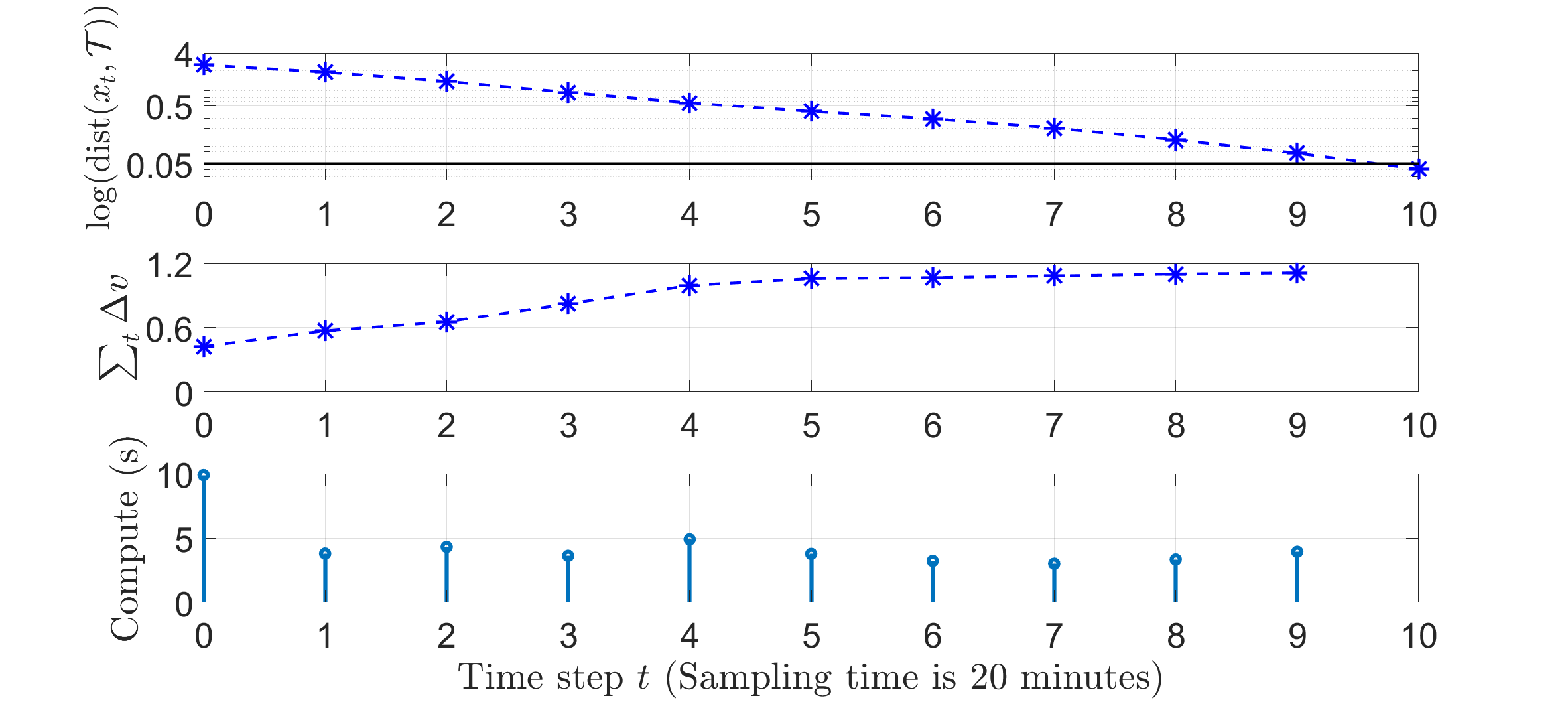}
    \caption{Evolution of the distance to the target (in log-scale) and cumulative $\Delta v$ over the course of
        rendezvous, and
        computation time for each solution of \eqref{eq:prob_sketch_2}.}
    \label{fig:spacecraft_metrics}
\end{figure}

We can also compute
an abort-safe control $u_{k|t}=\pi_k(\hat{z}_{k|t})$ 
for the current state estimate $\hat{z}_{k|t}$ by solving a convex problem,
\begingroup
    \makeatletter\def\f@size{9}\check@mathfonts
\begin{align}
    \begin{array}{rl}
    \underset{u_{k|t}\in\Uoffnom}{\text{min.}}&\ J(u_{k|t})\ \text{s.\ t.} \ A_k \hat{z}_{k|t} + B_k u_{k|t} \in \calk_{k+1|t}\ominus\Phi_{k},
    \end{array}\label{eq:pi_k_defn}%
\end{align}
\endgroup
where $J$ is a user-specified, convex cost function on the abort-safe control (we choose $J=\|\cdot\|_2$ to minimize post-failure fuel consumption), $\{\calk_{k|t}\}_{k=t}^{t+\Tsafe}$ is the sequence of sets obtained using
Sec.~\ref{sub:set_recursion_no_inv_A} with $\calk_{t+\Tsafe|t}=\calh_i^\complement\ominus\Eoffnom$
and $\calk(t, \Tsafe,
\calh_i^\complement\ominus\Eoffnom)=\calk_{t|t}$, as prescribed in Prop.~\ref{prop:suff_abort_safety}. We compute the corresponding adversarial disturbance $\phi_{k|t}$ by solving another convex
problem,
\begingroup
    \makeatletter\def\f@size{9}\check@mathfonts
\begin{align}
    \underset{\phi_{k|t}\in\Phi_k}{\text{min}} &\quad \operatorname{dist}(A_k \hat{z}_{k|t} + B_k \pi_k(\hat{z}_{k|t}) + \phi_{k|t},\cald).\label{eq:adv_dist}%
\end{align}
\endgroup
Here, \eqref{eq:adv_dist} computes $\phi_{k|t}$ that reduces the distance between the next state estimate and the keep-out set. 
Thus, we approximate the true worst-case disturbance sequence, whose exact computation would have required solving a two-player game, which is computationally difficult in six dimensions~\cite{bertsekas_1971}.

Fig.~\ref{fig:spacecraft_traj} demonstrates the designed nominal rendezvous trajectory, which takes $\Tsim = 10$ time steps ($200$
minutes) to reach $\calt$ with a total $\Delta v$ of $1.07$ m/s. The trajectory design required the computation of $385$
$18$-step RC sets, and the set computation took a total
of $7.53$ seconds. Fig.~\ref{fig:spacecraft_traj} also shows the designed abort-safe trajectories originating from each
time step of the nominal rendezvous trajectory using the post-failure control \eqref{eq:pi_k_defn} and adversarial disturbances \eqref{eq:adv_dist}, along with an
outer-approximation of the one-step forward
reach set $A_t \hat{z}_{k|t} + B_{k|t} \pi_k(\hat{z}_{k|t}) +
\Phi_k$. As expected, the abort-safety requirement \eqref{eq:abort_safety} is satisfied at all times
$t\in\Nint{1}{\Tsim}$. As the spacecraft approaches the target, we observe that the conservativeness of our enforcement of the abort-safety requirement \eqref{eq:abort_safety} via Prop.~\ref{prop:suff_abort_safety} causes the abort trajectories to cluster in front of the keep out set $\cald$. In actual rendezvous missions, the spacecraft would receive a go/no-go decision for docking as it nears the target.

Fig.~\ref{fig:spacecraft_metrics} shows the evolution of the distance to the target set (in log-scale) and the cumulative
$\Delta v$ expended over the course of the rendezvous, as well as the computation time spent solving
\eqref{eq:prob_sketch_2} at each time step $t\in\Nint{0}{\Tsim}$. The rendezvous trajectory initially uses
moderately high control inputs $\Delta v$ to steer the spacecraft towards the target while satisfying the
abort-safety requirements, and then utilizes the
momentum to reach the target set with minimal additional control inputs, as expected. The rendezvous trajectory maintains abort-safety using the constrained zonotope-based constraints computed with the method proposed in this paper.

\section{Conclusion}
\label{sec:conc}

We presented novel theory and algorithms to approximate the Pontryagin difference between a constrained zonotopic minuend and a symmetric, convex, and compact subtrahend. For broad classes of subtrahends, our approach admits closed-form expressions for the inner-approximation. We use these algorithms for a tractable and scalable computation of an inner-(and outer-)approximation of the robust controllable set for discrete-time linear systems with additive disturbance sets that are symmetric, convex, and compact and subject to linear state and input constraints. We showed by  numerical simulations that the proposed approach provides non-trivial inner-approximations of the RC sets with significantly shorter computation times than the previously published methods.

\appendix

\section{Proofs}
\label{app:proofs_aux}

\subsection{Proof of Prop.~\ref{prop:Indep}}
\label{app:proofs_aux_prop_indep}

    \emph{Proof of 1)} Assume for contradiction that, $G_C$ does not have full row rank for some  full-dimensional constrained zonotope $\calc=(G_C, c_C, A_C, b_C)$. 
    Then, there exists a vector $\alpha_{G_C}\in\bbr^n,\ \alpha_{G_C}\neq 0$ such that
    $\alpha_{G_C}^\top G_C=0$. 
    From \eqref{eq:b_infty}, for every $x\in\calc$, there exists $\dvec\in\calb_\infty(A_C,b_C)$ such that $G_C\dvec = x - c_C$. 
    Consequently, $\calc\subset \{x \mid \alpha_{G_C}^\top x = \alpha_{G_C}^\top c_C\}$ since $\alpha_{G_C}^\top(x - c_C)=\alpha_{G_C}^\top G_C \dvec=0$ for every $x\in\calc$.
    In other words, the affine dimension of $\calc$ is smaller than $n$.
    However, this is a contradiction since $\calc$ is full-dimensional. 
    Thus, $G_C$ has full row rank for every full-dimensional $\calc=(G_C, c_C, A_C, b_C)$. 

    \emph{Proof of 3)}
    We now show that Algo.~\ref{algo:min_row} converts any full-dimensional $\calc=(G_C, c_C, A_C, b_C)$ into a \textsc{MinRow} representation.
    From 1), $G_C'=G_C$ has full row rank. 
    Consider $A_C', b_C'$ in Step~\ref{step:ACprime} of Algo.~\ref{algo:min_row} with $[A_C',b_C']\in\bbr^{M_C'\times(N_C+1)}$.
    Then, $M_C'=\operatorname{rank}([A_C',b_C'])=\operatorname{rank}([A_C,b_C])\leq M_C$.
    It suffices to show that $\calc=(G_C,c_C,A_C',b_C')$ and $[G_C;A_C']$ has full row rank. 

    Without loss of generality, assume that $A_C',b_C'$ are the first $M_C'$ rows of $[A_C, b_C]$. 
    Since $\operatorname{rank}([A_C,b_C])=\operatorname{rank}([A_C',b_C'])$, every row of $[A_C,b_C]$ is a linear combination of the rows of $[A_C',b_C']$.
    In other words, there exists $E\in\bbr^{(\nconst{C}-\nconst{C}')\times\nconst{C}'}$ such that
    $[A_C,b_C]=[I_{\nconst{C}'};E][A_C',b_C']$. 
    The matrix $[I_{\nconst{C}'};E]$ has full column rank implying that $[I_{\nconst{C}'};E]y=0$ if and only if $y=0$~\cite[Ch. 11]{boyd_introduction_2018}. 
    Thus, 
    $\{\dvec \mid A_C\dvec=b_C\}=\{\dvec \mid A_C'\dvec=b_C'\}$, since for any $\dvec$ such that $A_C\dvec=b_C$,
    $[A_C,b_C][\dvec;-1]=0\Leftrightarrow[I_{\nconst{C}'};E][A_C',b_C'][\dvec;-1]=0\Leftrightarrow[A_C',b_C'][\dvec;-1]=0$. Therefore, $\calb_\infty(A_C,b_C)=\calb_\infty(A_C',b_C')$ and $\calc=(G_C,c_C,A_C,b_C)=(G_C,c_C,A_C',b_C')$.

    Since full-dimensional constrained zonotopes are non-empty, $\{\dvec \mid A_C'\dvec=b_C'\}$ is non-empty and $\operatorname{rank}(A_C')=\operatorname{rank}([A_C',b_C'])=M_C'$.
    Assume, for contradiction, that the matrix $[G_C;A_C']$ has linearly dependent rows. In other words, there
    exists $\beta_1\in\bbr^n,\ \beta_1\neq 0$ and $\beta_2\in\bbr^{\nconst{C}'},\ \beta_2\neq 0$ s.t.,%
    \begin{align}
        [\beta_1^\top,\beta_2^\top][G_C;A_C']=0 \equiv \beta_1^\top G_C + \beta_2^\top A_C'=0\label{eq:beta_1_beta_2}.
    \end{align}
    We know $\beta_1^\top G_C\neq0$ and $\beta_2^\top A_C'\neq 0$, since rows of $G_C$ and $A_C'$ are linearly
    independent among themselves. 
    From \eqref{eq:constrained_zonotope},
    for all $x\in\calc$, there
    exists $\dvec\in\bbr^{\ndvec{C}}$ s.t.,
    \begin{align}
        [G_C;A_C']\dvec = [x - c_C;b_C']\text{ and } \|\dvec\|_\infty \leq 1\label{eq:sop_form}.
    \end{align}
    By \eqref{eq:beta_1_beta_2} and \eqref{eq:sop_form}, $\calc\subset\{x \mid \beta_1^\top (x - c_C) + \beta_2^\top b_C'=0\}$, which is a contradiction 
    since $\calc$ is full-dimensional. Thus, all rows of $[G_C;A_C']$ are linearly independent, and $(G_C, c_C, A_C', b_C')$ is a \textsc{MinRow} representation of $\calc$.

    \emph{Proof of 2)}
    Since constrained zonotopes are a representation of polytopes~\cite[Thm. 1]{scott_constrained_2016}, the proof of 3) also shows that every full-dimensional polytope admits a \textsc{MinRow} representation.
    We now show that the reverse is also true, i.e., any non-empty polytope $\calc$ in \textsc{MinRow} representation $(G_C, c_C, A_C, b_C)$ is full-dimensional.
    Assume, for contradiction, that $\calc$ is not full-dimensional.
    Then, there exists an affine set characterized by $\alpha\in\bbr^n,\alpha\neq 0$ and $\beta\in\bbr$ such that $\calc\subset\{x \mid \alpha^\top x = \beta\}$.
    Since $\calc$ is non-empty,
    $\calc=\calc\cap\{x \mid \alpha^\top x = \beta\}$.
    From~\cite{raghuraman2022set}, $\calc=(G_C,c_C, [A_C;\alpha^\top G_C], [b_C;\beta-\alpha^\top c_C])=(G_C,c_C, A_C, b_C)$.
    Consequently, 
    there exists some $\gamma\in\bbr^{\nconst{C}},\gamma\neq 0$ such that $\alpha^\top G_C = \gamma^\top A_C$ and $\beta - \alpha^\top c_C = \gamma^\top b_C$, a contradiction since $[G_C;A_C]$ has full row rank. 
    Thus, $\calc$ is full-dimensional, which completes the proof.

    \emph{Proof of 4)}
    Algo.~\ref{algo:invertible} follows the steps of~\cite[Thm. 1]{scott_constrained_2016} with the zonotope $\calz$ in Step 1 defined as a rectangular outer-approximation with $\calc\subseteq \{x\in\bbr^n \mid l\leq
    x\leq u\}$, and $\sigma\in\bbr^{\nineq{C}}$ defined in Step 2 using \eqref{eq:support} such that $\calp=\{x\in\calz \mid \sigma \leq Hx \leq k\}\subset\bbr^n$.
    Thus, $(G_C, c_C, A_C, b_C)$ defined in Step 3 is a constrained zonotope representation of the given H-Rep polytope $\calc$. 
Since $\calc$ is full-dimensional, $l<u$ and $\sigma < k$. Consequently,  
\begingroup
    \makeatletter\def\f@size{9.5}\check@mathfonts
\begin{align}
    \left[\begin{array}{c}
    G_C\\
    A_C
    \end{array}\right]=\left[\begin{array}{cc}
    \operatorname{diag}\left({\frac{u - l}{2}}\right) & 0_{n\times M}\\
    H\operatorname{diag}\left({\frac{u - l}{2}}\right) & \operatorname{diag}\left({\frac{\sigma - k}{2}}\right)
    \end{array}\right],
\end{align}
\endgroup
is a lower triangular matrix with non-zero diagonal entries (and thereby, invertible). This completes the proof.
    
    \emph{Proof of 5)} Any \textsc{Invertible} representation satisfies
    \begin{align}
        \calc&=\left\{x\middle\vert\exists \dvec\in\bbr^{\ndvec{C}},\  
                        {\|\dvec\|}_\infty \leq 1,\ 
                        \left[G_C;A_C\right]\dvec=\left[x-c_C;b_C\right]
        \right\}\nonumber\\
        &=\left\{x\middle\vert 
                        {\left\| 
                        {\left[G_C;A_C\right]}^{-1}\left[x-c_C;b_C\right]\right\|}_\infty \leq 1
        \right\}.\label{eq:inf_norm_from_cz}
    \end{align}
    We obtain \eqref{eq:hrep_from_cz} by rearranging terms in \eqref{eq:inf_norm_from_cz}.
    \hfill\qed

\subsection{Proof of Prop.~\ref{prop:reverse}}
\label{app:proofs_aux_prop_reverse}

    Since $\calc$ is full-dimensional, it is non-empty.
    From \eqref{eq:constrained_zonotope} and strong duality (via refined Slater's condition~\cite{boyd2004convex}), %
    {\begingroup
    \makeatletter\def\f@size{9.5}\check@mathfonts
    \begin{align}
        \calc
        &=\left\{x \middle\vert \sup_{\substack{\nu\in\bbr^{n+\nconst{C}}\\
        {\|[G_C;A_C]^\top\nu\|}_1\leq 1}} \nu^\top[x - c_C;b_C] \leq 1\right\}.\label{eq:cz_duality}
    \end{align}
    \endgroup}%
    For
    every $i\in\Nint{1}{\ndvec{C}}$, define $\nu_i^\ast$ as the minimum-norm solution to $[G_C;A_C]^\top\nu =
    e_i$. 
    By Prop.~\ref{prop:Indep}.2, 
    $\nu_i^\ast$ is available in closed-form, where 
    {\begingroup
    \makeatletter\def\f@size{9}\check@mathfonts
    \begin{align}
        \nu_i^\ast &=
        {\left({[G_C;A_C][G_C;A_C]^\top}\right)}^{-1}[G_C;A_C]e_i ={\left({[G_C;A_C]}^\dagger\right)}^\top
                          e_i, \nonumber
    \end{align}
    \endgroup}%
    for every $i\in\Nint{1}{\ndvec{C}}$.
    Step~\ref{step:nu_i_defn} of Algo.~\ref{algo:outer_approx_hrep} (see \eqref{eq:nu_i_defn}) rescales $\nu_i^\ast$ to ensure
    that $[G_C;A_C]^\top\nu_i^\ast$ has a unit
    $\ell_1$-norm to obtain $v_i^\top$. Thus, $\pm v_i^\top$ are feasible
    for \eqref{eq:cz_duality} for
    every $i\in\Nint{1}{\ndvec{C}}$, and using \eqref{eq:cz_duality}, 
    \begin{align}
        \calc&\subseteq\calp\triangleq\left\{x\ \middle\vert\ {{\left\|V^\top [x - c_C;b_C]\right\|}_\infty \leq
        1}\right\},\label{eq:cz_outer_polytope}
    \end{align}
    with $V=[\nu^\ast_1;\ldots;\nu^\ast_{\ndvec{C}}]$.
    Steps~\ref{step:H_defn},~\ref{step:k_defn} of Algo.~\ref{algo:outer_approx_hrep} define $\calp=\{x \mid Hx \leq k\}$ with  
    $H\in\bbr^{2\ndvec{C}\times n}$ and $k\in\bbr^{2\ndvec{C}}$.\hfill\qed

\subsection{Proof of Prop.~\ref{prop:exact_reverse}}
\label{app:proofs_aux_prop_exact_reverse}

From \eqref{eq:inf_norm_from_cz}, an \textsc{Invertible} representation $\calc$ may also be expressed as the following H-Rep polytope,
\begin{align}
    \calc &=\left\{x\middle\vert \begin{array}{c}
              \forall i\in\Nint{1}{\ndvec{C}},\ \forall \delta\in\{-1,1\},\\ \delta e_i^\top {[G_C;A_C]}^{-1}[x-c_C;b_C]
      \leq 1\end{array}\right\}\label{eq:C_exact_i}.
\end{align}
Here, $v_i$ defined in Step~\ref{step:nu_i_defn} of
Algo.~\ref{algo:outer_approx_hrep} also simplifies to $e_i^\top {[G_C;A_C]}^{-1}$ when $[G_C;A_C]$ is
invertible~\cite[Sec. 11.5]{boyd_introduction_2018}.
Thus, Algo.~\ref{algo:outer_approx_hrep} returns a H-Rep polytope $\calp$ that coincides with \eqref{eq:C_exact_i} (thereby, equal to $\calc$).  The proof is completed with the observation that \eqref{eq:C_exact_i} is identical to \eqref{eq:inf_norm_from_cz}.\hfill\qed

\subsection{Proof of Prop.~\ref{prop:exact_general}}
\label{app:proofs_aux_prop_exact_general}

\emph{Exactness of $\calm^+$)} 
As seen from Prop.~\ref{prop:exact_reverse}, Algo.~\ref{algo:outer_approx_hrep} computes an exact H-Rep polytope, given an \textsc{Invertible}
representation $\calc$.
Since the rest of steps of
Algo.~\ref{algo:outer_approx} are exact, $\calm^+=\calc\ominus\cals$.

\emph{Exactness of $\calm^-$)}
    For an \textsc{Invertible} representation $\calc$, $\Gamma$ prescribed by Prop.~\ref{prop:pdiff_general_l2} is
    \begin{align}
        \Gamma=[G_C;A_C]^{-1}[I_n;0_{\nconst{C}\times n}]\label{eq:Gamma_under_Assum.2},
    \end{align}
    by~\cite[Sec. 11.5]{boyd_introduction_2018}. 
    We show that $\calm=\calc\ominus\cals$ is an affine transformation of $\calc_L$ defined in
    the proof of Thm.~\ref{thm:pdiff_general} (see \eqref{eq:m_minus_defn_general}). 

    {\begingroup
    \makeatletter\def\f@size{8.5}\check@mathfonts
        \begin{align}
          &\calm=\left\{x\middle\vert \begin{array}{c}
              \ \forall s\in\cals_0,\ \forall i\in\Nint{1}{\ndvec{C}},\ \forall \delta\in\{-1,1\},\\ \delta e_i^\top
              {[G_C;A_C]}^{-1}[x + s + c_S -c_C;b_C]
      \leq 1\end{array}\right\}\label{eq:exact_interim_2}
      \\
              &=\left\{x\ \middle\vert\ 
        \begin{array}{c}
            \forall s\in\cals_0,\ \forall i\in\Nint{1}{\ndvec{C}},\ \forall \delta \in\{-1,1\},\\
            \delta e_i^\top \left({\Gamma x+\Gamma s + {[G_C;A_C]}^{-1}[- c_{M^-};b_C]}\right)\leq 1\\
        \end{array}\right\}\label{eq:exact_interim_3}\\
              &=\left\{x\ \middle\vert\ 
        \begin{array}{c}
            \forall i\in\Nint{1}{\ndvec{C}},\ \forall \delta \in\{-1,1\},\\
            \delta e_i^\top \left({\Gamma x + {[G_C;A_C]}^{-1}[- c_{M^-};b_C]}\right)\leq 1
            - \rho_{\cals_0}(\Gamma^\top e_i)\\
        \end{array}\right\}\label{eq:exact_interim_4}\\
              &=\left\{x\ \middle\vert\ 
        \begin{array}{c}
\dvec \triangleq \Gamma x + {[G_C;A_C]}^{-1}[- c_{M^-};b_C],\\
            \forall i\in\Nint{1}{\ndvec{C}},\ \forall \delta \in\{-1,1\},\ \delta e_i^\top \dvec \leq D_{ii}\\
        \end{array}\right\}\label{eq:exact_interim_5}\\
              &=\left\{x\ \middle\vert\ 
        \begin{array}{c}
            G_C\dvec + c_{M^-} = x,\ A_C\dvec = b_C,\\
            \forall i\in\Nint{1}{\ndvec{C}},\ \forall \delta \in\{-1,1\},\ \delta e_i^\top \dvec \leq D_{ii}\\
        \end{array}\right\}\label{eq:exact_interim_6}\\
              &=\left\{G_C\dvec + c_{M^-}\ \middle\vert\ 
            A_C\dvec = b_C,\ \forall i\in\Nint{1}{\ndvec{C}},\ |e_i^\top \dvec| \leq D_{ii}
        \right\} \nonumber \\
              &=G_C\calc_L + c_{M^-}=\calm^-\label{eq:exact_interim_8}.
    \end{align}%
    \endgroup}%
    Here, 
    \eqref{eq:exact_interim_2} follows from \eqref{eq:pdiff} and \eqref{eq:C_exact_i} in
    Sec.~\ref{app:proofs_aux_prop_exact_reverse},
    \eqref{eq:exact_interim_3} follows from \eqref{eq:Gamma_under_Assum.2}, \eqref{eq:exact_interim_4}
    follows from encoding the condition for all $s\in\cals_0$ using the support function
    of $\cals_0$, \eqref{eq:exact_interim_5} follows from the
    definition of $D_{ii}$ in \eqref{eq:diag_matrix_general}, \eqref{eq:exact_interim_6} follows from
    expressing $\dvec$ as a solution of linear equations $[G_C;A_C]\dvec=[x - c_{M^-};b_C]$ with invertible
    $[G_C;A_C]$, and \eqref{eq:exact_interim_8} follows from
    the definition of $\calc_L$ in \eqref{eq:general_interim_8} (see the proof of Thm.~\ref{thm:pdiff_general}). Consequently, for an \textsc{Invertible} representation $\calc$, Prop.~\ref{prop:pdiff_general_l2} provides an exact
    characterization of $\calc\ominus\cals$.\hfill\qed

\subsection{Proof of Prop.~\ref{prop:suff_abort_safety}}
\label{app:proofs_aux_prop_suff_abort_safety}

    From~\cite[Thm. 2.1(iii)]{kolmanovsky1998theory}, $(\calu\ominus\calv)\oplus\calv\subseteq \calu$ for any sets $\calu,\calv$.
    Let a failure occur at some time $s\in\bbn$. Using
\eqref{eq:z_t_hat_z_t} and~\cite[Thm. 2.1(iii)]{kolmanovsky1998theory},  for some
    $i\in\Nint{1}{\nineq{D}}$, $\hat{z}_{s|s}=x_s\in\calk(s, \Tsafe,
    \calh_i^\complement\ominus\Eoffnom)\ominus\Eoffnom$, which implies that $\hat{z}_{s|s}\in\calk(s, \Tsafe,
    \calh_i^\complement\ominus\Eoffnom)$.
    From Defn.~\ref{defn:RC_set}, for every time step $k\in\Nint{s}{s+\Tsafe}$, there exists 
    $\pi_k\in\Pi$ such
    that $u_{k|s}=\pi_k(\hat{z}_{k|s})\in\Uoffnom$ steers the state estimate according to the dynamics
    \eqref{eq:offnom_ltv_est} to satisfy $\hat{z}_{k|s}\in
    \calh_i^\complement \ominus\Eoffnom$, despite the disturbance $\phi_{k|s}\in\Phi_k$. 
    By \eqref{eq:z_t_hat_z_t} and~\cite[Thm. 2.1(iii)]{kolmanovsky1998theory},  $\hat{z}_{k|s}\in \calh_i^\complement
    \ominus\Eoffnom$ implies that $z_{k|s}\in \calh_i^\complement$ (and thereby, $z_{k|s}\not\in\cald$) for all
    $k\in\Nint{s}{s+\Tsafe}$, which completes the proof.\hfill\qed

\bibliographystyle{ieeetr}        %
\bibliography{refs}
\end{document}